\documentclass{llncs} %%Comment for original version
\usepackage{latexsym}
\usepackage{amssymb}
\usepackage{amsmath}
\usepackage{mathtools} 
\usepackage{amsfonts}
\usepackage{algorithm}
\usepackage{algpseudocode}
\usepackage{enumitem}

\def\f{\textbf{f}}

\def\R{\mathbb{R}}
\def\ga{\mathfrak{a}} % "gothic a"
\def\gb{\mathfrak{b}} % "gothic a"
\def\gd{\mathfrak{d}} % "gothic a"
\newcommand{\bmat}{\left[\begin{matrix}}
\newcommand{\emat}{\end{matrix}\right]}
\usepackage[latin1]{inputenc} 
\usepackage{amsxtra}
\usepackage{amscd}
\usepackage{setspace}   % enables line spacing commands 
\usepackage{mathrsfs}   % enables \mathscr
\usepackage{tabularx}
\usepackage{booktabs}

\newcommand{\Z}{\mathbb{Z}}
\newcommand{\Q}{\mathbb{Q}}
\newcommand{\C}{\mathbb{C}}
\newcommand{\cO}{\mathcal{O}}
\newcommand{\cN}{\mathcal{N}}

\newcommand{\ignore}[1]{{}}

\newcommand{\A}{\mathbb{A}}

\newcommand{\GL}{\mathrm{GL}}
\newcommand{\SL}{\mathrm{SL}}

\newcommand{\PSL}{\mathrm{PSL}}

\newcommand{\gp}{\mathfrak{p}} % gothic p
\newcommand{\vol}{\mathrm{vol}}
\newcommand{\diag}{\mathrm{diag}}

\newcommand{\ord}{\mathrm{ord}}
\newcommand{\Log}{\mathrm{Log}}

\newcommand{\va}{\mathbf{a}}
\newcommand{\vb}{\mathbf{b}}

\newcommand{\vm}{\mathbf{m}}

\newcommand{\vx}{\mathbf{x}}

\newcommand{\vv}{\mathbf{v}}

\newcounter{daggerfootnote}

\newcommand{\henry}[1]{{ \textcolor{purple}{{#1}}}}

\newcommand{\phong}[1]{{ \textcolor{red!50!black}{Phong: {#1}}}}

\usepackage{hyperref}
\usepackage{xcolor}

    \title{Adelic reduction of module lattices}

    \author{Henry Bambury, Seungki Kim, Changmin Lee, Phong Q. Nguyen}
    
    \institute{}
\begin{document}
\maketitle

\begin{abstract}
We give a strict generalization of the LLL algorithm over number fields, based on the reduction theory of $\GL(n)$ over the adele ring of a number field. Our algorithm is free of heuristics, with rigorous bounds on output quality and complexity. As a consequence, we obtain a hierarchy of reductions from module-(H)SVP to ideal-HSVP, an example of which has runtime and approximation factors subexponential in the field degree. More importantly, we uncover a close connection between structured lattice reduction and a Diophantine approximation over number fields.
\end{abstract}

\section{Introduction}

%\subsection{Motivation}

Since structured lattices entered lattice-based cryptography thirty years ago with the NTRU cryptosystem, there has been much temptation to challenge them by exploiting the rich layers of the extra algebraic structure that they possess. Surprisingly, they turned out to be extremely resistant to such attempts, which in no small part led to their remarkable success today, with three out of five NIST PQC standards being based on structured lattices. Though there has been some progress, most notably via S-unit attacks (see e.g., \cite{CDPR16,PHS19}); \ignore{also the references in \cite{Bam22}),}the consensus is that they do not appear to impact the current NIST standards, as NIST Internal Report 8413 \cite[p.85]{IR8413} concluded:
\begin{quote}
\it ...the techniques used in this algorithm rely heavily on the multiplicative structure of ideal lattices, and do not seem to be directly applicable to module lattices of rank 2 or more. Hence these techniques are not known to directly impact the hardness assumptions of any of the round 3 candidates...
\end{quote}

One of the most natural ideas for the cryptanalysis of module lattices is an extension of the fundamental LLL algorithm \cite{LLL82} to module lattices. There are other motivations for such an algorithm as well arising from computational number theory in general, \emph{cf.} \cite{Sim10,Str22}. However, as will be discussed below, all previous attempts at such a generalization are either limited in scope, missing critical details, or reliant on relatively unfamiliar heuristics. The goal of the present paper is to make a contribution in this regard.

\subsection{Contributions}

Our work is grounded in the philosophy that the LLL algorithm \cite{LLL82} is an algorithmic manifestation of the reduction theory of the moduli space of lattices $\GL(n,\Z)\backslash\GL(n,\R)$ ---  \emph{cf.} \cite[Theorem 2.2]{Bor19}. Thus the ``correct'' module LLL algorithm would be an algorithmic manifestation of the reduction theory of the moduli space of module lattices (over a number field $F$), which is known to be
$$\GL(n,F) \backslash \GL(n,\A_F) / \prod_{\nu \nmid \infty} \GL(n,\cO_\nu)$$
(see Section 2 below for the notations, also see \cite[Sec. 2.3.2]{dBPTW26}). This space has reduction theory too, in the same sense that $\GL(n,\Z)\backslash\GL(n,\R)$ does (see e.g., \cite[Ch. 3]{Gar18})\ignore{{\color{purple} Henry: is it coherent to have $\GL$ on one side and $\SL$ on the other? Should we explain why the left quotient has the field instead of the ring? The generalisation might not be obvious to people that don't know reduction theory} {\color{orange} Unfortunately there's no short answer to these things... And I changed $\SL$ to $\GL$ to lessen the confusion}}. In this paper, we materialize this theory into an algorithm, or, in other words, we give an algorithmic version of this theory.

The outcome, which we refer to as the \emph{adelic LLL} algorithm (Algorithm \ref{alg:lll_p} below), 
takes as input a $g \in \GL(n,\A_F)$ (instead of $\GL(n,\R)$ for LLL),
and outputs a reduced $h \in \GL(n,\A_F)$ in the same double coset,
that is, there exists $\gamma \in \GL(n,F)$ and $k \in \prod_{\nu \nmid \infty} \GL(n,\cO_\nu)$ such that $h = \gamma g k$
and the first row is $h$ is ``short'',
namely it has low height --- an analogue of length in the number field context (\emph{cf.} Section \ref{sec:height}).
It shares with the classical LLL \cite{LLL82} all the important characteristics such as follows.
\begin{itemize}[label=\textbullet]
\item Improvement to the given basis $g$ is made by repeatedly size reducing and swapping. In fact, ours is a strict generalization of the original LLL, in that the two algorithms coincide over $\Q$. Just as LLL can be viewed as an efficient algorithmic version of Hermite's (exponential) inequality on Hermite's constant $\gamma_n$, our adelic LLL can be viewed as an algorithmic version
of a Hermite-like inequality on Thunder's adelic generalization of Hermite's constant $\gamma_{n,1}(F)$~\cite{Thu98}.
\item It is completely free of heuristics. In particular, its correctness and bounds on output quality (Propositions \ref{prop:rhf_bound}, \ref{prop:rhf_bound_n}) and time complexity (Propositions \ref{prop:swap_bound}, \ref{prop:swap_bound_n} and Theorems \ref{thm:main}, \ref{thm:main_n}) can all be rigorously established. %\henry{I suggest we also point to the Propositions for rank 2 as they have not been mentioned in the contributions (Propositions~\ref{prop:rhf_bound} and~\ref{prop:swap_bound})}

\item To elaborate: for LLL, it is well-known that it returns in polynomial time a lattice vector $v$ satisfying
\begin{align*}
 \|v\| \le [(4/3+\varepsilon)^{1/4}]^{n-1} \cdot |\det L|^{1/n} \,\text{and}\,
 \|v\| \le [(4/3+\varepsilon)^{1/2}]^{n-1} \cdot \lambda_1(L) 
 % \|\text{first row of}\,\, h\| & \le [(4/3+\varepsilon)^{1/4}]^{2(n-1)} \cdot \text{first minimum of the lattice}
 \end{align*}
where $L$ is the lattice and $n$ is its rank. 
In comparison, adelic LLL returns in polynomial time and polynomially many calls to its subroutines (which one can choose to be polynomial time themselves) an adelic vector $v \in F^n g \setminus \{0\} \subseteq \A_F^n$ such that  %a rank 1 submodule $v$ of the module lattice $L(g)$ associated to $g\in\GL(n,\A_F)$ (\emph{cf.} Section \ref{sec:L(g)}) satisfying
\begin{align*}
H(v) \leq Q^{n-1}\cdot H(\det g)^{1/n} \,\text{and}\, H(v) \leq Q^{2(n-1)} \eta(g),
\end{align*}
where $n$ is the module rank, $\eta(g)$ is a height analogue of $\lambda_1(L)$, and $Q$ is a factor that depends on the number field $F$. Here, $g$ corresponds to a module lattice, $v$ to a rank $1$ submodule of this module lattice, and the height function $H$ provides a way to measure sizes.
%\begin{align*}
%Q=(4/3+\varepsilon)^{\frac{d}{4}}(CB)^\frac{1}{2}e^{\frac{Ad}{2}},
%\end{align*}
%$d$ is the degree of $F$, and $A$,$B$,$C$ depend on the number field $F$
%and the subroutines used. 

%Here, the $(\delta - \mu^2)$ part coincides with LLL's $(4/3+\varepsilon)$, and the right-hand part accounts for the number field $F$.
%\item Adelic LLL provides a reduced pseudo-basis of a module lattice: in particular, the first row of $h$ gives a low-height rank-1 submodule. By running any SVP approximation algorithm on that submodule, one obtains a short vector in that rank-1 submodule, and such a short vector approximates both HSVP and SVP in the input module lattice, because the height is low.
\item It works over any number field, and for any module lattice of rank $\geq 2$, free or not.
\item Its simplicity and explicit parameter values allow for implementation on classical computers and deeper performance analyses. %{\color{red}[REFs]; also mention and link Henry's POC if everyone approves it} {\color{purple} I have so work to do with the scaling still, and need to test the swap function when ideals are nontrivial}
\end{itemize}
To the best of our knowledge, no other proposed (structure-preserving) reductions over number fields enjoy all these properties simultaneously --- see Section~\ref{sec:rel-work} and Table \ref{table:comparison} below, and Appendix \ref{sec:comparison}.

In addition to the robustness of the algorithm, there are a couple of points on the (theoretical) performance of adelic LLL that are worth mentioning: 
%A disclaimer is in order: our algorithm, like many of its predecessors, relies on a few choices of Hermite-SVP oracles, one of dimension $d$ and another of dimension $d' \in [d+1,2d]$. One is free to choose anywhere between LLL and an exact SVP solver, and any value of $d'$ for which $F$ has a subfield of degree $d'-d$. As expected, these choices directly influence the performance of adelic LLL.

\subsubsection{Reduction from module-(H)SVP to ideal-HSVP}

Well-known previous results on structure-preserving reductions (e.g., \cite{LPSW19,MS20}) phrased their achievements in terms of reductions from rank $n\geq 2$ module-SVP (resp. Hermite-SVP) to rank $\geq2$ module-SVP (resp. Hermite-SVP). In light of the NIST comment cited above, a reduction to ideal-HSVP would be the most desirable. Yet there existed difficulties involving the reduction of rank 2 modules.

Our work clears this obstacle, thereby obtaining a reduction from approximate module-SVP (resp. HSVP) of any rank to approximate ideal-HSVP, without involving any overly costly oracle that would beat the point of the reduction. The approximation factors and the time complexity are described explicitly in terms of the field invariants and the subroutines used.
See Theorem~\ref{thm:reduction} below for details.

\subsubsection{Reduction to a Diophantine approximation problem}
%\henry{Thinking out loud, isn't this Diophantine approximation problem exactly an instance of rank 2 module HSVP? so are we reducing a problem to itself?} {\color{orange} Not necessarily, that's just the laziest way of solving it. Like, on norm-Euclidean fields, you can do continued fraction which is much more efficient. Also if you can somehow choose a good $q$, then it's an ideal aCVP.} \henry{Thanks, related to this, do you have references for continued fractions over number fields?}{\color{orange} if you google complex continued fractions, you'll get some notes. For general number fields not much is done due to the obvious difficulties}
It turns out that the output quality of adelic LLL depends critically on how well one can solve the following Diophantine approximation problem over a number field $F$: given parameters $0<\mu<1$ and $C>0$, and a vector $(m_\sigma)_{\sigma:F\rightarrow\C} \in F\otimes\R$, find $p,q$ in the ring of integers of $F$ such that
$$|m_\sigma\sigma(q)-\sigma(p)|\leq\mu \mbox{ and } |\sigma(q)| \leq C^{\frac{1}{|F:\Q|}}$$
for every real or complex embedding $\sigma$ of $F$. The smaller we are able to set $C$ (as a function of $\mu$), the better the output of adelic LLL.

In this work, we simply employ a well-known, standard strategy based on (unstructured) lattice reduction (Algorithm \ref{alg:sizereduce} below). If one can sufficiently improve on this, however, adelic LLL can outperform the current state-of-art unstructured reductions --- e.g., BKZ, slide algorithm --- of comparable complexity. In other words, the hardness of the above Diophantine problem is related to whether module lattices are more ``vulnerable'' than the unstructured ones. Though it turns out that our algorithm performs somewhat worse than BKZ of comparable runtime, it paves an interesting way forward. Halving the currently achieved approximation exponent is sufficient for adelic LLL to match BKZ's performance; more than half gives it an advantage. See the discussion in Section \ref{sec:reduction} below for details.%See Section \ref{sec:reduction} for a discussion.

\paragraph{} We strongly believe that viewing structured lattice reduction from the point of view developed in this paper opens up multiple avenues for further research. We note that as with classical LLL, it is likely that our algorithm performs much better than its theoretical guarantees. Furthermore, any further algorithmic progress on rank-1 module lattices or the adelic subroutines directly improves the approximation factor of adelic LLL.

\ignore{
For a power-of-2 cyclotomic field of degree $d$, assuming the generalized Riemann Hypothesis and the affirmative answer to Weber's class number problem, $\gamma'/\gamma$ ranges from $O(d^4(\log d)^2)$ {\color{orange} I guess as Henry pointed out we have to settle with subexponential (like $e^{\tilde O(\sqrt{d})}$) loss for now :( this is really vexing. So in this case, there's no reason to use exact SVP, we can use slide algorithm with blocksize $\sqrt{d}$ to reduce subexponential rank 2 module HSVP to subexponential ideal HSVP (for which [CDW21] gives heuristic poly time algorithm) in subexponential time. Phong --- the creator of slide algorithm --- please comment if this makes sense.} to $1.075^{d+o(1)}$,
\phong{There's a concern here: slide reduction already solves
subexponential HSVP in subexponential time. I don't know how
these subexponential factors compare, but qualitately,
both are subexponential.} {\color{orange} the dimension halves so perhaps that helps a little, but I basically agree}
depending on the choice of the HSVP solver for $\Z$-rank $d+1$ lattices (which is a necessary subroutine of adelic LLL; one can choose an exact SVP solver, the LLL algorithm, or anything in between). Other than the complexity of this HSVP solver, this reduction runs in polynomial time.

\phong{I'm getting confused here.
Let's try to clarify.
Running LLL on a 2-rank module lattice $L$ of dimension $n$,
would yield in polynomial time a sublattice $\Gamma$ of co-volume $\le 2^{O(n^2)} \sqrt{\vol(L)}.$
Even if we were solving SVP exactly on $\Gamma$,
we would obtain a short vector of $L$
of norm $\le 2^{O(n)} \vol(L)^{1/n}$, 
which is no better than the first vector of LLL.
(maybe the constant would be better, because I didn't specify what is $O()$). And $\Gamma$ has no structure,
so we cannot apply all the work on ideal-SVP algorithms.
On the other hand, adelic-LLL obtains an ideal lattice $\Gamma$
of co-volume/norm $\le \mathrm{h} \sqrt{\vol(L)}$
where $h$ depends on the size reduction in adelic-LLL
(whether we use polynomial-time or exponential-time size reduction),
the number field, and many other things.
Here, there are two benefits compared to LLL:
$\Gamma$ has structure, it is an ideal lattice, 
and not an arbitrary sublattice, so we can apply specialized SVP algorithms for $\Gamma$;
and furthermore, $h$ is much smaller than $2^{O(n^2)}$,
so it makes it worth it.
Finally we plug in ideal-SVP algorithms,
which would yield a Hermite factor of the form
$\le \mathrm{subexponential} \times h^{2/n}$.
}
{\color{orange} thanks, this is illuminating}
\phong{There's a lot of question marks depending on what is exactly the value of $h$. }
{\color{orange} that's discussed in Sec 3.5 and seems for now it's at best $\exp(\tilde O(\sqrt d))$ on GRH. I have an idea to try to make it polynomial but probably later}

\phong{Henry and I have been discussing regularly at EC.
One thing surprising us is the AC19 result:
they used a Siegel-type condition,
so they can also obtain bound the firt part of their pseudo-basis,
so in principle, they could also reduce HSVP of a rank-2 module lattice to HSVP in a rank-1 module lattice with a subexponential loss, albeit with heuristic and stronger more expensive oracles. However, they don't discuss at all
the possibility at applying ideal-SVP algorithms to exploit the reduction: why is that?
It seems to us that any algorithm to find short vecctors
in ideal lattices in $O_K$ can also find short vectors
in ideal lattices in $(O_K)^2$: are we missing anything?
}
{\color{orange} what's the reference?}
\phong{The reference inside the AC19 paper? There's even a part
in the paper where they apply a 2-rank algorithm to a 1-rank module lattice. Anyways, we checked that slide reduction with $\sqrt{n}$ blocksize yields a subexponential factor in the 2-rank approximation.} {\color{orange} Oh you're talking about LPSW19. This may be Changmin's question to answer. My guess is that it has to do with the bound on the determinant of the dense rank 1 submodule they obtain. In my reading ... I realize I was being dumb, sorry }
\phong{In our reading, we look at Lemma 3.2 of the AC19 paper,
which gives $h = \alpha_K$ and we're interested in $h^{2/d}$.
Section 3.3 gives $\alpha_K$ of the order  $\gamma_{\mathcal N}^{2d}$
, so $h^{2/d} \approx \gamma_{\mathcal N}^4$. In Theorem 4.1, they take $\gamma_{\mathcal N} = 2^{\tilde{O}(d \log \rho(R))/d}$, 
so $\gamma_{\mathcal N}^4 =2^{4\log\rho(R) \log^{O(1)}(d  \log \rho(R))}$ which seems subexponential, no? In AC19, the cost of the algorithm
is too expensive, but it would have motivated the search for better alternatives.} 
{\color{orange} ah ok, so it's because of the costs (requiring exact CVP) they didn't mention it. the factor itself is better than what I can afford for now}

The idea is simple: we run adelic LLL on the input rank 2 module, which outputs a dense rank 1 submodule. If one obtains a short vector of this submodule, then trivially one also obtains a short vector of the original rank 2 module.

\phong{Here are a few suspicious calculations.
Imagine we use LLL for the diophantine approximation,
then the main term of $C$ is $(4/3)^{d^2/4}$,
so the final Hermite approximation factor would be $C^{1/(2d)} = (4/3)^{d/8}$, ignoring subexponential terms
from the quantum polynomial-time algorithm on the ideal.
If we're simply using LLL on the final ideal,
we would get $(4/3)^{d/8} (4/3)^{d/4} = (4/3)^{3d/8}$.
On the other hand, if we were running LLL directly on the 2-rank lattice,
we would get $(4/3)^{(2d)/4} = (4/3)^{d/2}$ which
is a worse exponent: or are there typos in the formula?
(if it's $C^{1/d}$ rather than $C^{1/(2d)}$, then
the advantage disappears)
If correct, adelic-LLL using only LLL subroutines
would give a better constant (namely $(4/3)^{3/8}$ vs $(4/3)^{1/2}$ than integer LLL! It sounds too good to be true,
so I do wonder about $C^{1/d}$ vs $C^{1/(2d)}$.
Does adelic-LLL really provide a denser sublattice
than integer-LLL? }

\phong{In the general case, if diophantine approximation
uses an algorithm with a Hermite approximation factor $h$,
then we can take $C=h^{d+1}$ so $C^{1/(2d)} \approx \sqrt{h}$.
For slide reduction of blocksize $\beta$,
we get $h = \beta^{d/(2\beta)}$ so $\sqrt{h} = \beta^{d/(4 \beta)}$
For instance, if we use $\beta = \sqrt{d/4}$ then
$\sqrt{h} = \sqrt{d/4}^{\sqrt{d/4}} = d^{\sqrt{d}/4}$.
And we have to multiply this by another subexponential factor
from the number field:
if we also use the same slide reduction for the ideal,
then we have to multiply that
$\beta^{d/(2\beta)} = d^{\sqrt{d}/2}$,
so overall $d^{3\sqrt{d}/4}$
On the other hand, slide of blocksize $\beta = \sqrt{d/4}$ 
on the full module lattice
would give $\sqrt{\beta}^{(2d)/\sqrt{d/4}} = d^{4\sqrt{d}/4} = d^{\sqrt{d}}$, which is worse.
So adelic-LLL using only subexponential slide reduction of
squareroot blocksize
also gives a better exponent than 
the same slide reduction on the full module lattice!
}

\phong{On the other hand, I'm not gonna put what happens with an exact algorithm, cause slide reduction with a blocksize linear in the dimension
gives a polynomial approximation factor,
which will be better than any subexponential factor 
coming from the number field.
}

\phong{We have to clarify what happens with rank-$r$ module lattices for arbitrary $r \ge 2$. I know we just need to plug in the previous reductions, but people will want to know
what's going on.}
{\color{orange} I have some other plan on what to do for rank $\geq 2$. we can talk more on 5/13}
\phong{Looking at both Noah2019 paper and the LPSW19,
there is something strange in their claims: when we take $r=2$,
we get a much worse bound, as if the reduction wasn't right.
For instance, Th 3.9 in LPSW19 with rank 2 blows up
the approximation factor $gamma' = \gamma^3$,
instead of $\gamma$.
There could be several things going on:
either their reduction is not the right reduction,
and the real adelic LLL will give the ``right'' approximation factor,
and that's one more reason for the adelic point of view;
either there is an obstruction we fail to see.
What we want is a HSVP reduction from rank-$r$
to rank-2.
} {\color{orange} I(SK)'m probably not the best person to handle this question.. did you talk to Stehle, etc.?}}

\subsection{Use of adeles}
\ignore{
\phong{Is the following analogy meaningful?
Integer LLL is usually described with Gram-Schmidt vectors,
because it helps the visualize what is going on.
However, it is well known that a real implementation of LLL
does not need to explicitly compute the Gram-Schmidt vectors
(though it could do so if it wanted to): instead, only the $\mu_{i,j}$'s coefficients and the Gram-Schmidt squared norms are computed.
So is it fair to say that the ad\`eles are useful to ``visualize''
but we don't actually need to compute them explicitly 
(which here would be expensive) to run the reduction?
} {\color{orange} in the pure adelic version only $\mu$'s and square norms are required too. 

In the justification of the ``postprocess'' step, I had to manipulate on the right side by $\GL(2,\cO_\nu)$, and that feels to me like a genuinely adelic thing to do, though at the end of the day it comes down to ideal computations
}}

As suggested above, we adopt the language of adeles in this paper. This may make our work somewhat less accessible, but the technical advantages it brings more than make up for this potential entry barrier. For instance, one of the biggest stumbling blocks in previous attempts at generalizing LLL to number fields has been the class group. Hence many works (e.g., \cite{Nap96,GLM09,KL17,LPL20}) on this subject were limited to the cases in which the ring of integers is (norm-)Euclidean; also the recent work \cite{DMPT} was primarily devoted to a difficulty of this kind. The adelic perspective allows one to overcome this problem elegantly and effortlessly, without even barely mentioning the class group. %It may be worth noting the following related, standard fact: it takes class number-many disjoint Siegel domains to cover the fundamental domain of the module lattices classically; adelically, it takes only one. 
This is one of the countless instances in mathematics in which the use of the right language makes the problem drastically more tractable.

%It reflects what seems to be a general principle that, when it comes to the geometry of numbers, most phenomena are independent of the class group --- see e.g., \cite[Remark 6]{GSVV25} for another instance of this principle at work.

Since the adele ring is defined as an infinite product (see Section \ref{sec:adeles} below), one may wonder how to write an implementable algorithm out of it. When the input comes in the form of a pseudobasis --- which is the case for the cryptanalyses of all existing module-based schemes --- however, it is possible to rewrite all necessary adelic computations in terms of efficient ideal and vector computations. Thereby we obtain the adelic LLL algorithm (Algorithms \ref{alg:lll_p} and \ref{alg:lll_p_n} below) that is implementable over classical computers. 

Adeles are not exactly new in lattice-based cryptography. They have appeared in the recent work \cite{dBPTW26}; see also \cite[Ch.8]{Bam25} and \cite{DK22}. A similar construction, sometimes referred to as Arakelov divisors, features in \cite{dBDPW20} as well. We expect adeles to play more and more important roles in the field, and hope this work may facilitate such development.

\subsection{Related work}\label{sec:rel-work}

LLL for modules over number fields has been of interest for a number of reasons since the 1990s. It may be said that a landmark work on this subject is Lee, Pellet--Mary, Stehl\'e, and Wallet \cite{LPSW19}. It was the first, and the only work so far, that attempted to describe a fully structured reduction algorithm over an arbitrary number field, with full details on the algorithm itself as well as its performance analysis. We refer to \cite[Sec. 1]{LPSW19} for a summary of the literature prior to their work.

The hardest part of \cite{LPSW19} is reducing a rank 2 module lattice. Rank $\geq 3$ modules are handled in an analogous manner to BKZ, given the rank 2 reduction. Indeed, it seems that the one provided by \cite[Section 4]{LPSW19} is the only known rank 2 module reduction algorithm prior to the present work. 
However, it is not an efficient algorithm, because it requires an exact CVP oracle on a lattice
of much higher dimension than the input lattice, namely
$O(d^{2+\eta})$ (where $d$ is the field degree and $\eta>0$)
versus $2d$.
In addition, it relies on several heuristics,  and
it is a quantum algorithm (unless the input module is free), although it can be dequantized with an additional routine at an extra polynomial-time cost \cite[Theorems 6.1-3]{DMPT}. Moreover, some important parameters for the algorithm are not explicitly specified --- e.g., $c$ in \cite[Lemma 2.5]{LPSW19} --- presenting a difficulty in its implementation. (Although \cite{KK24} gives a partial implementation of \cite{LPSW19}, they completely bypass the rank $2$ part by replacing it with BKZ.)

The main differences between the reduction algorithms of \cite{LPSW19} and the present work are summarized in Table \ref{table:comparison}. A more careful comparison, detailing all the aspects in which our work and \cite{LPSW19} differ, is given in Appendix \ref{sec:comparison}.

\begin{table}[htbp]
\centering
\caption{Summary of comparison between \cite{LPSW19} and this work. Here, $F$ is the target number field of degree $d$, and $n$ is the module rank.} \label{table:comparison}
\renewcommand{\arraystretch}{1.3} 
\begin{tabularx}{\textwidth}{@{} >{\raggedright\arraybackslash}p{4.5cm} >{\raggedright\arraybackslash}X >{\raggedright\arraybackslash}X @{}}
\toprule
 & \textbf{Lee et al.} \cite{LPSW19} & \textbf{This work} \\
\midrule
Quantum/classical                            
    & Quantum except on free modules. Dequantization requires an additional algorithm from \cite{DMPT}. 
    & Classical \\
\midrule
Heuristic                         
    & Yes
    & No \\
\midrule
Required oracle                              
    & Exact-CVP solver on dimension $d^{2+\eta}$. 
    & Any HSVP solver on dimension $d+\deg E$, for any subfield $E \subseteq F$. \\
\midrule
Major contributor to complexity              
    & Preprocessing + poly. calls to CVPP.
    & Polynomially many calls to the oracle. \\
\midrule
Height of short rank-1 module (very roughly) 
    & $2^{\mathrm{(const)}nd(\log d)^{O(1)}}$ 
    & $2^{\mathrm{(const)}nd^2}$, with LLL for HSVP, $\deg E = \mathrm{(const)}d$ \\
\bottomrule
\end{tabularx}
\end{table}

%For comparison, we remind the readers that our proposed adelic LLL (Algorithm \ref{alg:lll_p}) makes a gradual improvement by simply swapping two adjacent basis vectors, just like the classical LLL.

Another interesting, oft-cited work on the subject is \cite{MS20}, which generalizes the slide reduction \cite{GN08} to the context of modules. The most relevant case to us is when the blocksize is $2$, which the authors admit is ``essentially identical to the reduction in \cite{LPSW19}'' \cite[Sec. 4.1]{MS20}. They do not substantiate the rank 2 algorithm however. The consideration of the bitsizes is also missing, which is especially important for implementation purposes. The language of filtration introduced in \cite{MS20} can be read off from the adelic Iwasawa decomposition (Section \ref{sec:Iwasawa} below), just as a flag of a Euclidean lattice can be read off from the Iwasawa decomposition of the corresponding basis matrix.

\cite{KEF20} (see also the full version \cite{EKF19}) presents a reduction toolkit specialized to free module lattices over cyclotomic fields, especially those of conductor $2^k$. They exploit the tower structure of the field (e.g., $\Q \subseteq \Q(\zeta_2) \subseteq \ldots \subseteq \Q(\zeta_{2^{k-1}}) \subseteq \Q(\zeta_{2^{k}})$) and the symplectic structure of certain kinds of lattices (e.g., that of NTRU lattices, as in \cite{GHN06}). These run orthogonal (in the colloquial sense) to the kind of structure that adelic LLL is designed to exploit; perhaps they can complement one another. In any case, the algorithms of \cite{KEF20} are highly heuristic, as the authors admit in \cite[Sec. 1.3]{EKF19}. It is also to be noted that they are mainly aimed at being fast, at the expense of the heuristic sub-LLL approximation factor $2^{\tilde O(d)}$ \footnote{$\tilde O(d)$ means $O(d(\log d)^c)$ for some constant $c>0$.} on rank 2 module lattices over a cyclotomic field of degree $d$; a more precise expression for the approximation factor seems unavailable. %In comparison, adelic LLL can be leveraged to attain the approximation factor of $2^{O(d)}$ in polynomial time or of $\mathrm{poly}(d)$ in $2^{O(d)}$ time. {\color{red} this is incorrect now so fix this} Moreover, the implicit constants here can be made quite explicit: see Proposition \ref{prop:rhf_bound} and the discussion in Section \ref{sec:reduction} below. %{\color{orange} honestly I suspect the speedup of \cite{KEF20} comes from parallelization --- which can be done for classical unstructured LLL too --- and not from the explotation of structure. For example, is it faster than Flatter (Ryan-Heninger)? Please comment.}

The module BKZ as referred to by in \cite{DLP25}, while it does exploit a certain aspect of the module structure, is also tangential to our work, as our main goal is to explore the structured local routine, as in \cite{LPSW19}, rather than outsourcing it to an unstructured SVP solver.

Outside the cryptographic literature, \cite{Str22} gives a reduction algorithm of what is essentially rank 2 free modules over the ring of integers $\cO_F$ of a totally real field $F$. Its strongest point is that it returns the representative of some fundamental domain\footnote{Thus the representative may not consist of short vectors of the module.} of the Hilbert modular group $\PSL(2,\cO_F)$, thanks to its relatively simple reduction theory. Its performance analysis is incomplete, however, except that \cite{Str22} mentions the complexity is exponential in the input bitsize.

Yet another noteworthy work is \cite{RT25}, which presents an LLL-like reduction algorithm for $\mathbf G(\Z)\backslash \mathbf G(\R)$, where $\mathbf G$ is any split semisimple group over $\Z$. From the reduction theory perspective, \cite{RT25} comes very close in spirit to our work. However, it does not account for the general number field case of $\mathbf G(\cO_F)\backslash \mathbf G(F\otimes\R)$ --- one may suggest the Weil restriction, but it breaks both splitness and semisimplicity in general. The ideas presented in this paper are precisely the necessary ingredients to make this happen.

\ignore{
\subsection{Future work}

The robust and transparent structure of adelic LLL opens up paths to numerous follow-up research on structured lattice reduction. We list some of the planned follow-up works.

The most imminent project, as already mentioned above, is to try improving on the Diophantine approximation over number fields, as its strength is the major deciding factor as to whether adelic LLL can outperform unstructured reduction. 

Another important direction is to construct an efficient implementation of adelic LLL, and study its behavior in practice. Just like the original LLL algorithm \cite{LLL82}, it is likely that adelic LLL will do significantly better than its theoretical output quality bound (Proposition \ref{prop:rhf_bound_n}). It is even more so, since, whereas the RHF bound of LLL \cite[(1.9)]{LLL82} is sharp, its counterpart for adelic LLL is not, due to the difficulties in navigating the geometry of the ring of integers $\cO_F$ and the unit lattice $\cO_F^*$.

There are also parts of the algorithm that could see improvements. For instance, we employ (a corrected version of --- see Appendix \ref{app:correct} below) the method of \cite{LPSW19} to keep the bitsizes low throughout the runtime of the algorithm; while it does its job, we believe there may be a method with a better fit for adelic LLL. Adopting quantum algorithms (e.g., \cite{BS16,dBF26}) for improved performance is another promising avenue of approach.
}

\section{Preliminaries}

The goal of this section is to quickly list the number theoretic notions used in this paper. We seek to deliver the minimum amount of practical knowledge necessary to understand the rest of the paper. The reader is referred to standard texts in algebraic number theory, such as \cite{Wei74}, for full details, or to works in cryptology such as \cite{dBPTW26,DK22}.

Throughout we fix a number field $F/\Q$ of degree $d$, and write $\cO_F$ for its ring of integers.

\subsection{Basic number field facts}

A \emph{place} $\nu$ of $F$ is an equivalence class of absolute values, or metrics, defined on $F$, where two absolute values $|\cdot|_1,|\cdot|_2$ are said to be equivalent if $|x|_1<1\Leftrightarrow |x|_2<1$ for all $x \in F$. The completion of $F$ with respect to $\nu$ is denoted by $F_\nu$. In particular, $F$ is a subset of $F_\nu$, and there exists the natural embedding $F \hookrightarrow F_\nu$ by inclusion. For $a \in F$, we often denote by $a_\nu \in F_\nu$ the image of $a$ via this map. We write $\cal P=\cal P(F)$ for the set of all places of $F$. There are two categories of places, \emph{finite} and \emph{infinite}. 

The finite places, denoted $\nu \nmid \infty$, correspond one-to-one to the nonzero prime ideals of $\cO_F$. For this reason, we will use the notations for places and primes interchangeably, or even refer to places as primes and vice versa, as is customary in the literature. For each $\nu \nmid \infty$, there exists the \emph{ring of $\nu$-adic integers} $\cO_\nu \subseteq F_\nu$, the set of all elements of $F_\nu$ with absolute value $\leq 1$. $\cO_\nu$ is a principal ideal domain whose unique prime ideal is $\nu\cO_\nu$; its field of fractions is precisely $F_\nu$. We fix once and for all a generator $\pi_\nu$ of $\nu\cO_\nu$, called a \emph{uniformizer} of $F_\nu$. Every element $a_\nu$ of $F_\nu$ has a unique presentation $a_\nu =\gamma \pi_\nu^\alpha$, where $\gamma \in \cO_\nu^*$ and $\alpha \in \Z$; moreover, $a_\nu \in \cO_\nu$ if and only if $\alpha \geq 0$. We define $\ord_\nu a_\nu := \alpha$, and $N(\nu) = |\cO_\nu:\nu\cO_\nu|$. The absolute value corresponding to $\nu$ is then $|a_\nu|_\nu = N(\nu)^{-\ord_\nu a_\nu}$. We extend $N()$ to fractional ideals, to denote their norm.

\subsubsection*{Example} If $F=\Q$ and $\nu=p$ is a rational prime, then $F_\nu = \Q_p$ is the field of $p$-adic numbers, and $\cO_\nu = \Z_p$ is the ring of $p$-adic integers. For $a \in \Q$ or $\Q_p$, $\mathrm{ord}_pa$ is simply the power of $p$ in the prime factorization of $a$. %{\color{purple} Henry: we could add an example where the uniformiser is not just the prime, for example with $\Q_5(\sqrt{5})$} {\color{orange} I think it's best if curious readers consult number theory texts or AI. One doesn't need to understand local rings too well in the rest of the paper}
\newline

An infinite place (or prime), denoted $\sigma \mid \infty$, corresponds to either a real field embedding $F \rightarrow \R$, or a conjugate pair of complex embeddings $F \rightarrow \C$. Let $r_1,r_2$ be the number of real and complex embeddings, respectively. Following the common convention, we let $\sigma_1, \ldots, \sigma_{r_1}$ denote the real embeddings, and $\sigma_{r_1+j} = \bar\sigma_{r_1+r_2+j}$ for $1 \leq j \leq r_2$ denote the pairs of complex embeddings. To clarify, $F$ is a field of degree $d = r_1+2r_2$, and has $r=r_1+r_2$ infinite places. If $\sigma$ is real (complex), $F_\sigma \cong \R$ $(\cong \C)$. The corresponding absolute value, in both real and complex cases, is $|a|_\sigma = |\sigma(a)|$.

\subsubsection*{Example}
If $F=\Q$, there is only one infinite place, corresponding to $\Q \hookrightarrow \R$ by $x \mapsto x$. If $F$ is a real quadratic field, it has two real places, corresponding to $x \mapsto x$ and $x \mapsto \bar x$, where $\bar x$ is the Galois conjugate of $x$. If $F$ is an imaginary quadratic field, these two embeddings form a conjugate pair of a single complex place of $F$.

\vspace{4mm}

For any $\Z$-basis $\{b_1, \ldots, b_d\}$ of $\cO_F$, the \emph{discriminant} of $F$ is the quantity
\begin{equation*}
\Delta_F = \left(\det (\sigma_i(b_j))_{1 \leq i,j \leq d} \right)^2.
\end{equation*}
This definition is independent of the choice of the basis. 
%It is known that $|\Delta_F|^{1/2}$ is the covolume of the lattice $\rho(\cO_F)$ in $F \otimes_\Q \R$ with respect to the metric above.
There exists also the \emph{logarithm map} $\Log : F^* \rightarrow \R^{r}$ defined by
\begin{equation*}
\Log(x) = (\log |\sigma_1(x)|, \ldots, \log |\sigma_{r_1}(x)|, 2\log |\sigma_{r_1+1}(x)|, \ldots, 2\log |\sigma_{r_1+r_2}(x)|).
\end{equation*}
%Its kernel is $\mu_F$ {\color{purple} Henry: I'm not sure this is correct? should be ok if the domain is $\cO_F^*$} {\color{orange} yes you're right, thanks}, the set of the roots of unity in $\cO_F$; we write $|\mu_F| = w_F$. 
$\Log$ takes $\cO^*_F$ to a lattice in $\R^{r}$ of rank $r-1$, called the \emph{unit lattice}; it is isomorphic to $\cO_F^*/\mu_F$, where $\mu_F$ is the group of the roots of unity in $\cO_F$. Its covolume, with respect to the standard metric on $\R^{r}$, is $R_F\sqrt{r}$, where $R_F$ is called the \emph{regulator} of $F$. 
%{\color{purple} Henry: For now the regulator is unused, also the definition here is off by a small factor $\sqrt{r}$?} {\color{orange} yes you're absolutely right,thanks. What I really wanted to emphasize was that when I think of the metric on unit lattice, I think of the standard metric on $\R^r$ restricted to it}

\subsection{Adeles} \label{sec:adeles}
The \emph{adele ring} $\A_F$ is defined to be the subset of the product
%\begin{align*}
$\prod_{\nu \in \cal P} F_\nu$
%\end{align*}
consisting of elements $(a_\nu)_{\nu \in \cal P}$ such that all $a_\nu \in \cO_\nu$ for all but finitely many $\nu \nmid \infty$. Here $(a_\nu)_{\nu \in \cal P}$ is referred to as an \emph{adele}. The natural embeddings $F \hookrightarrow F_\nu$ induce another natural embedding $F \hookrightarrow \A_F$ by $a \mapsto (a_\nu)$. 
Henceforth we identify $F$ with the image of this map.

$\A_F$ is equipped with the topology with a basis consisting of sets of the form $\prod_\nu O_\nu$, where $O_\nu$ is an open subset of $F_\nu$, and $O_\nu=\cO_\nu$ for all but finitely many finite $\nu$.

The set of \emph{finite adeles} $\A_\f$ is defined analogously, except that it is a subset of the product
%\begin{align*}
$\prod_{\nu \nmid \infty} F_\nu$
%\end{align*}
over the finite places of $F$. The set of \emph{infinite adeles} $\A_\infty$ is simply the (finite) product
%\begin{align*}
$\prod_{\nu | \infty} F_\nu.$
%\end{align*}

For $x \in \A_F$, we denote by $x_\f$ and $x_\infty$ the images of $x$ by the natural projections $\A_F\rightarrow \A_\f$ and $A_F\rightarrow\A_\infty$, respectively. We extend this notation naturally to other adelic constructs like $\A_F^n$ and $\GL(n,\A_F)$.

The \emph{product formula} states that, for $a \in F^*$,
\begin{align*}
\prod_{\nu \nmid \infty}|a_\nu|_\nu \cdot \prod_{\sigma \mid \infty} |a_\sigma|^{e_\sigma}=1,
\end{align*}
where $e_\sigma = 1$ or $2$ according to whether $\sigma$ is real or complex.

\subsection{Metric on $\A_\infty$}

Recall the canonical embedding $\rho$ of $F$ into $\R^{r_1} \times \C^{2r_2}$, defined by
\begin{equation*}
\rho(x) = (\sigma_1(x), \ldots, \sigma_d(x)).
\end{equation*}
The image $\rho(F)$ spans a vector space of dimension $d$ over $\R$, which we can identify with three sets at the same time: the subspace (isomorphic to $\R^{r_1}\times\C^{r_2}$) of $\R^{r_1} \times \C^{2r_2}$ consisting of all elements whose $i$-th and $(i+r_2)$-th complex entries are conjugates for all $i$, the tensor product $F \otimes_\Q \R$, and the set of infinite adeles $\A_\infty$. For $x \in F$ or $\A_F$, we sometimes write $x_\infty$ for the corresponding element of $\A_\infty$.

We endow an inner product on $\A_\infty \cong F\otimes_\Q\R$ by restricting to it the standard inner product on $\R^{r_1} \times \C^{2r_2}$, so that
\begin{equation*}
\langle \rho(x), \rho(y) \rangle = \sum_{i=1}^d \sigma_i(x)\bar\sigma_i(y),
\end{equation*}
and
\begin{equation*}
\|\rho(x)\|^2 = \langle \rho(x), \rho(x) \rangle = \sum_{i=1}^d |\sigma_i(x)|^2.
\end{equation*}

We extend the embedding $\rho$, the above inner product and the norm to the multi-dimensional counterparts in the natural way, and denote them by the same notations.

\subsection{How an adele gives a lattice}\label{sec:L(g)}
\ignore{
\phong{I have another silly question.
Since we "often" use the set $F^n g$, 
wouldn't it be more natural to call it the adelic lattice $L_{\A}(g)$, and define $L(g)$ from $L_{\A}(g)$ directly, without going through $I(g)$?
Then we can define our adelic first minimum (the minimal non-zero height) and list all the properties which generalize adelically,
like the minimum Gram-Schmidt norm.
}
{ \color{orange}How about the following? If you like it I'll change the rest of section accordingly:

For $g \in \GL(n,\A_F)$, the \emph{module lattice} associated to $g$ is given by (see e.g. \cite{BV83})
\begin{align*}
L(g) = \{(xg)_\infty \in \A_\infty^n:x\in F^n, (xg)_\nu \in \cO_\nu^n \mbox{ for all finite $\nu$}\}.
\end{align*}

I kind of like it because it trivializes the lemma below.

}
\phong{Yes. I was wondering what is the correct name: I've seen the word rigid adelic space, I don't know if it's correct}}
For $g \in \GL(n,\A_F)$, the \emph{module lattice} associated to $g$ is given by (see e.g. \cite{BV83})
\begin{align*}
L(g) = \{(xg)_\infty \in \A_\infty^n:x\in F^n, (xg)_\nu \in \cO_\nu^n \mbox{ for all finite $\nu$}\},
\end{align*}
whose first minimum
is $\lambda_1(L(g)) = \min_{u \in L(g), u \ne 0} \| u\|$.
The following lemma is immediate from this definition.
\begin{lemma}
For any $h \in \prod_{\nu \nmid \infty}\GL(n,\cO_\nu)$ and $k \in \GL(n,F)$, $L(g)=L(kgh)$.
\end{lemma}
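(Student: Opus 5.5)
We prove the two inclusions $L(kgh)\subseteq L(g)$ and $L(g)\subseteq L(kgh)$ by a change of variables $x\mapsto xk$ together with the invariance of $\cO_\nu^n$ under $\GL(n,\cO_\nu)$. Since $k^{-1}\in\GL(n,F)$ and $h^{-1}\in\prod_{\nu\nmid\infty}\GL(n,\cO_\nu)$, and $g=k^{-1}(kgh)h^{-1}$, the reverse inclusion follows from the forward one applied to $kgh$ in place of $g$. So it suffices to show $L(kgh)\subseteq L(g)$.

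Take an element $(x\,kgh)_\infty$ of $L(kgh)$, with $x\in F^n$ and $(xkgh)_\nu\in\cO_\nu^n$ for all finite $\nu$. Put $y=xk$; since $k$ has entries in $F$, $y\in F^n$. We must check two things.

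\textbf{Infinite part.} The element $h$ lies in $\prod_{\nu\nmid\infty}\GL(n,\cO_\nu)$, which we regard as an element of $\GL(n,\A_F)$ whose infinite components are the identity. Hence $(ygh)_\infty=(yg)_\infty$, so the vector equals $(yg)_\infty$.

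\textbf{Finite parts.} For each finite $\nu$ we have $(yg)_\nu=(ygh)_\nu\,h_\nu^{-1}$. The row vector $(ygh)_\nu$ lies in $\cO_\nu^n$, and $h_\nu^{-1}$ has entries in $\cO_\nu$. Therefore $(yg)_\nu\in\cO_\nu^n$, and $(yg)_\infty\in L(g)$.

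The only point needing care is the convention that elements of $\prod_{\nu\nmid\infty}\GL(n,\cO_\nu)$ embed in $\GL(n,\A_F)$ with trivial infinite component, and that $F^n$ sits diagonally. The latter guarantees that $(xk)_\nu=x_\nu k_\nu$ and that multiplication commutes with taking components. There is no real obstacle; the argument is purely formal.
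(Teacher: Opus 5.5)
Your proof is correct and is the routine verification the paper intends when it says the lemma is ``immediate from this definition''. The key steps are the substitution $y=xk$, the invariance of $\cO_\nu^n$ under $\GL(n,\cO_\nu)$, and the fact that $h$ has trivial archimedean component; the symmetry trick with $k^{-1},h^{-1}$ then gives the reverse inclusion.
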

\ignore{
\begin{proof}
The first claim is clear from the definition. For the second, observe that $x \in I(kg) \Leftrightarrow xk \in I(g)$. Thus
\begin{align*}
L(kg) &= \rho(I(g)k^{-1}) \cdot (kg)_\infty    \\
&= \rho(I(g))k^{-1}_\infty(kg)_\infty \\
&= \rho(I(g)) g_\infty = L(g).
\end{align*}
The second equality above holds because $\rho(xk^{-1}) = \rho(x)k^{-1}_\infty$ for any $x \in F^n$.
\qed
\end{proof}
}
\begin{remark}
In fact, it holds that $L(g) = L(g')$ if and only if $g,g'$ represent the same coset in $$\GL(n,F) \backslash \GL(n,\A_F) / \prod_{\nu \nmid \infty} \GL(n,\cO_\nu).$$
See e.g., \cite[Sec. 2.3.2]{dBPTW26} (where they factor on the right by a little more to normalize the determinant and rotational factor).
\end{remark}
\ignore{
\phong{I have a stupid question: later on, we use row notations for $g$: we look at the height of row vectors.
In that case, we would have expected that by multiplying to the left by a unimodular matrix (over the ring), we preserve the lattice. But here, it's the opposite: we multiply to the right.
}
{\color{orange} yes this is one weird thing about adeles. the role you're saying is played by $\GL(n,F)$ which indeed acts on the left. The right quotient controls what's sometimes called a ``level.'' For example, in the classical situation, sometimes people consider $\SL(n,\R)$ quotiented on the left by a finite index subgroup $\Gamma$ of $\SL(n,\Z)$. When expressing this adelically, it may sound strange, but the correct thing to do is to quotient by the same $\GL(n,F)$ on the left and change the group on the right.}
}
In the classical language, suppose we are given a pseudobasis of a module lattice of the form
%\begin{align*}
$M=\mathfrak b_1 \vv_1 \oplus \ldots \oplus \mathfrak b_n \vv_n$
%\end{align*}
where $\vv_i \in (F\otimes\R)^n$, say. In the adelic language, $M$ is precisely $L(g)$, where
\begin{align*}
g_\infty = \begin{pmatrix} - & \vv_1 & - \\ & \vdots & \\ - & \vv_n & - \end{pmatrix}, g_\nu = \begin{pmatrix} \pi_\nu^{-\ord_\nu \mathfrak b_1} & & \\ & \ddots & \\ & & \pi_\nu^{-\ord_\nu \mathfrak b_n} \end{pmatrix}
\end{align*}
for each finite $\nu$. 
\ignore{
In particular, notice that
$$I(g) = \{(b_1,\ldots,b_n)\in F^n : b_i \in \gb_i\}.$$}
%{\color{purple} Henry: Maybe add here that $I(g)$ is just another encoding of the $\mathfrak b_i$ from the pseudobasis} {\color{orange} is this good?} {\color{purple}fine for me, but its annoying to have to introduce another notation so not sure if I like this in the end:)} {\color{orange} gave it a second try}

%Notice $H(\det g) = \prod_{i=1}^n N(\mathfrak b_i) \cdot \prod_{\sigma \mid \infty} |\det g_\sigma|$.

%In this light we can interpret Thunder's upper bound on the height of the densest rank $k$ sub-$\cO_F$-module
%\begin{align*}
%|\gamma_{n,k}(F)|^{\frac{d}{2}} H(\det g)^\frac{k}{n}.
%\end{align*}
%(In our paper height is normalized differently than in Thunder's.)

\subsection{Height}\label{sec:height}
Define the (local) \emph{height} at $\nu$ of $v_\nu = (a_1,\ldots,a_n) \in F_\nu^n$ by
\begin{align*}
H_\nu(v_\nu) = \begin{cases} \max_i N(\nu)^{-\ord_\nu a_i} & \mbox{if $\nu$ is finite} \\ \sqrt{a_1^2 + \ldots + a_n^2} & \mbox{if $\nu$ is real} \\ |a_1|^2 + \ldots + |a_n|^2 & \mbox{if $\nu$ is complex.} \end{cases}
\end{align*}
For $v \in \A_F^n$, its (global) \emph{height} is defined to be
\begin{align*}
H(v) = \prod_{\nu \in \cal P} H_\nu(v_\nu).
\end{align*}
The product formula implies that 
$H(v)= H(rv)$ for any $r \in F^*$. %{\color{purple} Henry: Note to self: this definition of height corresponds to $\|\cdot\|_{\A_F}$ on id\`eles}

Sometimes it is convenient to separate the finite and infinite parts of the height. We denote them by $H_\f = \prod_{\nu \nmid \infty} H_\nu$ and $H_\infty = \prod_{\nu \mid \infty} H_\nu$, respectively.

\begin{lemma}\label{lemma:height}
For $g \in \GL(n,\A_F)$, $\det L(g) = |\Delta_F|^\frac{n}{2}H(\det g).$ Here $\det g = \prod_\nu \det g_\nu$.
\end{lemma}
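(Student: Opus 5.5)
The plan is to reduce to the pseudobasis case and then compute the covolume directly. By the Lemma above, $L(g)$ depends only on the double coset of $g$. I will also check that $H(\det g)$ is invariant on that double coset. Indeed, $\det(kgh)=\det k\cdot\det g\cdot\det h$. For $k\in\GL(n,F)$, $\det k\in F^*$, and the product formula gives $H(\det k)=1$. Here one notes that $H$ on a scalar, i.e.\ $n=1$, is exactly $\prod_\nu|\cdot|_\nu^{e_\nu}$, with the complex local height being the squared absolute value. For $h\in\prod_{\nu\nmid\infty}\GL(n,\cO_\nu)$, each $\det h_\nu\in\cO_\nu^*$ has absolute value $1$. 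Since $H$ is multiplicative on scalars, $H(\det g)$ is an invariant of the double coset.

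Next I normalize the finite part. I would use the elementary-divisor (Smith normal form) theory over the PID $\cO_\nu$: for each finite $\nu$, $g_\nu = k_\nu d_\nu h_\nu$ with $k_\nu,h_\nu\in\GL(n,\cO_\nu)$ and $d_\nu$ diagonal. However, I need a global $k\in\GL(n,F)$, not local ones. So the cleaner route is the reverse direction: show that every module lattice $L(g)$ admits a pseudobasis $\mathfrak b_1\vv_1\oplus\cdots\oplus\mathfrak b_n\vv_n$, and in fact one with $\mathfrak b_1=\cdots=\mathfrak b_{n-1}=\cO_F$ (Steinitz). This means $g$ lies in the same double coset as the matrix $g'$ described in Section~\ref{sec:L(g)}, with $g'_\infty$ having rows $\vv_i$. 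Concretely, $I(g)=\{x\in F^n: (xg)_\nu\in\cO_\nu^n\ \forall\nu\nmid\infty\}$ is a finitely generated torsion-free $\cO_F$-submodule of $F^n$ of rank $n$, because $g_\nu\in\GL(n,\cO_\nu)$ for almost all $\nu$. Hence it has a pseudobasis $\mathfrak b_i e_i'$, and $L(g)=\rho(I(g))g_\infty$. Since $H(\det g)$ and $\det L(g)$ are both double-coset invariants, I may take $g=g'$ of the pseudobasis form. I expect this identification, namely making rigorous that $g$ and $g'$ lie in the same double coset, or more simply computing directly with $I(g)$ and its pseudobasis, to be the main obstacle. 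Everything else is bookkeeping.

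In the pseudobasis form, $H_\f(\det g)=\prod_\nu N(\nu)^{\sum_i\ord_\nu\mathfrak b_i}=\prod_i N(\mathfrak b_i)$ and $H_\infty(\det g)=\prod_\sigma|\det g_\sigma|^{e_\sigma}$. For the covolume, $L(g)=\bigoplus_i\rho(\mathfrak b_i)\vv_i$, viewed in $\A_\infty^n\cong(F\otimes\R)^n$. First consider the lattice $\rho(\mathfrak b_1)\oplus\cdots\oplus\rho(\mathfrak b_n)\subseteq(F\otimes\R)^n$, i.e.\ the case where $g_\infty$ is the identity. Its covolume is $\prod_i N(\mathfrak b_i)|\Delta_F|^{1/2}=|\Delta_F|^{n/2}\prod_i N(\mathfrak b_i)$, using the standard fact $\mathrm{covol}\,\rho(\mathfrak b)=N(\mathfrak b)|\Delta_F|^{1/2}$ with the metric of the paper. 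Then right multiplication by $g_\infty$ is the $\R$-linear map $\prod_\sigma g_\sigma$ on $\prod_\sigma F_\sigma^n$. Its real determinant is $|\det g_\sigma|$ at a real place and $|\det g_\sigma|^2$ at a complex place, because a complex $n\times n$ matrix has real determinant $|\det|^2$. These multiply to $H_\infty(\det g)$. Combining the two factors gives $\det L(g)=|\Delta_F|^{n/2}H(\det g)$.
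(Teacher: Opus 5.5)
The paper gives no argument of its own for this lemma; it simply cites \cite[Lemma 2.3]{Kim23}. Your proposal is therefore a genuinely different, self-contained route, and its bookkeeping is correct. Both $H(\det g)$ and $\det L(g)$ are double-coset invariants: use the product formula for $\det k\in F^*$, and $H_\nu(\det h_\nu)=1$ for $h_\nu\in\GL(n,\cO_\nu)$. For a pseudobasis element, $H_\f(\det g)=\prod_i N(\gb_i)$. The lattice $\rho(\gb_1)\oplus\cdots\oplus\rho(\gb_n)$ has covolume $|\Delta_F|^{n/2}\prod_i N(\gb_i)$, precisely because the paper's metric counts both embeddings of each complex pair. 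Finally, $v\mapsto vg_\infty$ has real Jacobian $\prod_{\sigma\mid\infty}|\det g_\sigma|^{e_\sigma}=H_\infty(\det g)$; the factor $2$ in the metric at complex places cancels between source and target.

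The step you flag as the main obstacle is the only one with real content, and it does need proof. You must show that, for every finite $\nu$, the closure of $I(g)$ in $F_\nu^n$ is exactly $\cO_\nu^n g_\nu^{-1}$. The inclusion $\subseteq$ is trivial. For $\supseteq$, density of $F^n$ in $\A_\f^n$ (the Chinese remainder theorem) gives, for any $y\in\cO_\nu^n g_\nu^{-1}$, an $x\in F^n$ arbitrarily close to $y$ at $\nu$ with $x\in\cO_{\nu'}^n g_{\nu'}^{-1}$ at all other finite $\nu'$.

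With this in hand, write $I(g)=\bigoplus_i\gb_i e'_i$. Let $E\in\GL(n,F)$ have rows $e'_i$, and let $g'$ be the pseudobasis element, with $g'_\infty=Eg_\infty$ and $g'_\nu=\diag(\pi_\nu^{-\ord_\nu\gb_1},\ldots,\pi_\nu^{-\ord_\nu\gb_n})$. The two descriptions of the closure give $\cO_\nu^n g_\nu^{-1}=\cO_\nu^n(g'_\nu)^{-1}E$. Hence $g'_\nu=Eg_\nu k_\nu$ with $k_\nu\in\GL(n,\cO_\nu)$, i.e.\ $g'\in\GL(n,F)\,g\prod_{\nu\nmid\infty}\GL(n,\cO_\nu)$.

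You could instead observe $L(g)=\rho(I(g))g_\infty=L(g')$ and invoke the Remark on double cosets in the module-lattice subsection. But that Remark is only cited, and its nontrivial direction is exactly this local--global fact. Your fallback of computing directly with $I(g)$ needs the same input, in the form $[\cO_F^n:I(g)]=H_\f(\det g)$.

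Compared with the bare citation, your route shows where $|\Delta_F|^{n/2}$ and $H_\infty$ come from. It also connects the lemma to the pseudobasis encoding used by the algorithms. The cost is a detour through the structure theory of $\cO_F$-lattices.

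If you prefer to avoid pseudobases altogether, the same CRT input gives a one-step adelic proof. Set $U=\A_\infty^n\times\prod_{\nu\nmid\infty}\cO_\nu^n$. Then $\A_F^n=F^ng+U$, and $U\cap F^ng$ projects isomorphically onto $L(g)$, so $\mathrm{vol}(\A_F^n/F^ng)=\det L(g)\cdot\mathrm{vol}(\prod_{\nu\nmid\infty}\cO_\nu^n)$. Right multiplication by $g$ scales Haar measure on $\A_F^n$ by $\prod_\nu H_\nu(\det g_\nu)=H(\det g)$, since on scalars $H_\nu$ is the normalized absolute value. Comparing with $g=1$, where $L(1)=\rho(\cO_F^n)$ has covolume $|\Delta_F|^{n/2}$, yields the lemma.
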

\begin{proof}
This follows from e.g. \cite[Lemma 2.3]{Kim23}.
%{\color{orange} In general, if you have a submodule $I$ of $g$ of rank $k$, then its det = its height $\cdot\sqrt{\Delta_F}^k$} \henry{should we add the definition of the height for a submodule in the prelims? using wedge products for example} {\color{orange} I feel it's best to minimize the amount of knowledge imposed on the reader to understand the paper. Perhaps I'll add a reference for readers interested in details}
\qed
\end{proof}
We introduce the minimal height (which corresponds to the adelic first minimum introduced in~\cite{BV83}):
for $g\in \GL(n,\A_F)$, let 
\[\eta(g) =\min_{v \in F^ng\backslash\{0\}}H(v).\]
It is possible to view $\eta$ from the perspective of pseudobases:
\begin{lemma}\label{lemma:module}
Let $M$ be a module lattice given by a pseudo basis $M=\mathfrak b_1 \vv_1 \oplus \ldots \oplus \mathfrak b_n \vv_n$ 
where the $\vv_i \in (F\otimes\R)^n$'s are linearly independent, 
and the $\mathfrak b_i$'s are fractional ideals.
Let $g \in \GL(n,\A_F)$ such that $M=L(g)$.
Then:
$$ \eta(g) = \min_{\substack{\text{nonzero } \vv \in (F\otimes\R)^n \\  \text{fractional ideal } {\mathfrak b}, {\mathfrak b} \vv \subseteq M }} N(\mathfrak b) H_\infty( \vv),$$
which is $\le \min_{1 \le i \le n} N(\mathfrak b_i) H_\infty( \vv_i).$
Furthermore:
$$ d^{-d/2} |\Delta_F|^{-1/2} \lambda_1(M)^d \le \eta(g) \le d^{-d/2} \lambda_1(M)^d \le n^{d/2} (\det M)^{1/n}$$
\end{lemma}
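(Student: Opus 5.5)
The plan is to first turn the definition of $\eta(g)$ into the ideal/vector formula, and then prove the chain of inequalities using AM--GM and Minkowski.

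\emph{Step 1: the formula for $\eta(g)$.} Take a nonzero $v = xg$ with $x\in F^n$. Put $\vv = v_\infty$, the image of $x$ in $(F\otimes\R)^n$ composed with $g_\infty$, and let $\mathfrak b$ be the fractional ideal $\prod_\nu \nu^{-\ord_\nu(v_\nu)}$. Here $\ord_\nu(v_\nu)$ means the minimal order of the coordinates of $v_\nu$. With this choice $H_\f(v)=N(\mathfrak b)$.

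Unwinding the definition of $L(g)$ gives $\mathfrak b\,\vv\subseteq M$. Indeed, for $a\in\mathfrak b$ we have $(a x g)_\nu\in\cO_\nu^n$ for every finite $\nu$, precisely by the choice of $\mathfrak b$.

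Conversely, suppose $\mathfrak b\vv\subseteq M$ with $\vv\ne0$. Every element of $M$ has the form $(yg)_\infty$, so $\vv = (xg)_\infty$ for some $x\in F^n$ (divide by any nonzero $a\in\mathfrak b$). The inclusion $\mathfrak b\vv\subseteq M$ then forces $(xg)_\nu\in \mathfrak b^{-1}\cO_\nu^n$. Hence $\mathfrak b$ is contained in the maximal ideal from the forward direction, and so $N(\mathfrak b)\ge H_\f(xg)$. This gives the minimum formula.

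Taking $\mathfrak b=\mathfrak b_i$ and $\vv=\vv_i$ yields the bound by $\min_i N(\mathfrak b_i)H_\infty(\vv_i)$.

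\emph{Step 2: the comparison with $\lambda_1$.}

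For the upper bound, take $u\in M$ achieving $\lambda_1(M)$, with $\mathfrak b=\cO_F$ and $\vv=u$, so $N(\mathfrak b)=1$. By the definitions of the local heights, $H_\infty(u)=\prod_{\sigma} \|u_\sigma\|^{e_\sigma}$. The total weight is $\sum e_\sigma=d$ and $\|u\|^2=\sum_\sigma e_\sigma\|u_\sigma\|^2$. By AM--GM,
\[
\prod_\sigma (\|u_\sigma\|^2)^{e_\sigma}\le (\|u\|^2/d)^d .
\]
Hence $H_\infty(u)\le d^{-d/2}\lambda_1^d$.

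For the lower bound, let $(\mathfrak b,\vv)$ be a minimizer, so $\mathfrak b\vv$ is a rank one lattice of covolume $|\Delta_F|^{1/2}N(\mathfrak b)H_\infty(\vv)$. This is Lemma~\ref{lemma:height} with $n=1$: the coordinate-wise norm $\|\vv_\sigma\|$ plays the role of the absolute value. The main care needed is rotating $\vv_\sigma$ to a scalar, which is possible since the covolume only depends on the norms at each place.

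Minkowski's theorem in the form $\lambda_1(\mathfrak b\vv)\le \sqrt d\,\cdot\mathrm{covol}^{1/d}$, applied to this $d$-dimensional lattice, gives
\[
\lambda_1(M)^d\le d^{d/2}|\Delta_F|^{1/2}\eta(g).
\]
This is exactly the claimed lower bound $d^{-d/2}|\Delta_F|^{-1/2}\lambda_1(M)^d\le\eta(g)$.

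For the last inequality, apply Minkowski to $M$ itself, which has dimension $nd$:
\[
\lambda_1(M)\le \sqrt{nd}\,(\det M)^{1/(nd)} .
\]
Raising to the $d$-th power and multiplying by $d^{-d/2}$ gives $n^{d/2}(\det M)^{1/n}$.

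I expect Step 1's converse direction to be the main point needing care. The subtlety is the identification of arbitrary $\vv\in(F\otimes\R)^n$ with some $(xg)_\infty$: it only holds because $\mathfrak b\vv\subseteq M$ forces $\vv$ into the $F$-span of $M$.
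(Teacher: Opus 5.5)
Your proposal is correct and follows the route the paper intends: the paper's proof simply invokes the AM--GM and Minkowski-on-a-rank-one-submodule arguments of \cite[Lemma 2.2]{LPSW19}, adapted from $\lambda_1^{\cN}$ to $\eta(g)$, and leaves the ideal-theoretic formula for $\eta(g)$ implicit, which your Step 1 supplies. The one step you should state explicitly is that $x\mapsto(xg)_\infty$ is injective on $F^n$ (since $g_\infty$ is invertible and $\rho$ is injective); this is what turns $b\vv\in M$ into $(bxg)_\nu\in\cO_\nu^n$ for all finite $\nu$ in the converse direction of Step 1.
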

\begin{proof}
\cite[Lemma 2.2]{LPSW19} proved inequalities relating $\lambda_1(M)$
and a quantity called $\lambda_1^{\cN}(M)$, which is slightly different from $\eta(g)$,
but the same arguments show our inequalities. \qed
\end{proof}
% I chose \eta because it's h in greek
The last inequalities of Lemma~\ref{lemma:module} show that finding a short vector in $M$ (with respect to either $\lambda_1$ or $\det$) can be reduced to finding a short vector in 
the rank-1 module lattice $\mathfrak b_1 \vv_1$ (with respect to its $\det$),
provided that $N(\mathfrak b_1) H_\infty( \vv_1)$ is close to $\eta(g)$.

\ignore{
\begin{remark}
The notion of height allows some flexibility. For example, heights at infinite places can be replaced by any $L^p$-norm. They can also be twisted by a linear transformation, in the sense of \cite{Thu93} It may be interesting to ask whether some heights are more suitable than others for algorithmic purposes.
\end{remark}
}

\subsection{Iwasawa decomposition in $\GL(n,\A_F)$} \label{sec:Iwasawa}

For any place $\nu$, $\GL(n,F_\nu)$ admits an \emph{Iwasawa decomposition} $g_\nu = u_\nu a_\nu k_\nu$, where $u_\nu$ is a lower-triangular unipotent matrix, $a_\nu$ is diagonal and with positive real entries when $\nu \mid \infty$, and $k_\nu \in K_\nu$, where $K_\nu =\GL(n,\cO_\nu), \mathrm{O}(n,\R)$ or $\mathrm{U}(n,\C)$ according to whether $\nu$ is finite, real, or complex. In the real or complex case, computing this decomposition comes down to the Gram-Schmidt orthogonalization. Note that, over the infinite places, this coincides with the Gram-Schmidt orthogonalization on $(F\otimes\R)^n$ commonly seen in the cryptology literature (e.g. \cite[Sec. 2.3]{LPSW19}).\ignore{{\color{purple} Henry: I think this discrepancy is interesting and maybe deserves more details but we can discuss this later depending what the target audience ends up being} {\color{orange} actually I think I was initially wrong and they actually coincide --- so fixed. Perhaps help me check this?} {\color{purple} I think I agree. Checked it for n=2}} When $\nu \nmid \infty$, the decomposition can be made unique by dictating that the diagonal entries $a_{i,\nu}$ of $a_\nu$ are powers of $\pi_\nu$, and that the $(i,j)$-entry of $u_\nu$ is to vanish under the natural projection $F_\nu \rightarrow (a_{i,\nu}/a_{j,\nu})\cO_\nu$ --- \emph{cf.} \cite[Claim 2.2.1]{Gar18}.

\ignore{
When $\nu$ is finite, there exists an analogous process. {\color{orange} 

Here's how it's done, in case $n=2$. For $\nu \nmid \infty$, take a matrix
\begin{align*}
g=\begin{pmatrix}
x & y \\ z & w
\end{pmatrix} \in \GL(2,F_\nu).
\end{align*}
I claim we can find $k=\GL(2,\cO_\nu)$ such that $g=bk$ and $b$ is a lower triangular matrix. First, if $\ord_\nu x > \ord_\nu y$, then we multiply $g$ from the right by
\begin{align*}
\begin{pmatrix}
 & 1 \\ 1 &
\end{pmatrix}.
\end{align*}
Thus we can assume without loss of generality that $\ord_\nu x \leq \ord_\nu y$, that is, $y/x \in \cO_\nu$. Thus multiplying $g$ by
\begin{align*}
\begin{pmatrix}
1 & -y/x \\ 0 & 1
\end{pmatrix} \in \GL(2,\cO_\nu),
\end{align*}
we can write $g=bk$ as claimed. Let's write
\begin{align*}
b=\begin{pmatrix}
x & 0 \\ z & v
\end{pmatrix}.
\end{align*}
Then
\begin{align*}
b
= \begin{pmatrix}
1 & 0 \\ z/x & 1
\end{pmatrix}
\begin{pmatrix}
x & 0 \\ 0 & v
\end{pmatrix},
\end{align*}
which gives the desired decomposition of $g$. Tracking our computations, one sees that $H_\nu(x,0) = H_\nu(x,y)$ and $H_\nu(z,v) = H_\nu(z,w)$.
}
}

In case $n=2$ for example, collecting the local decompositions together, one obtains an Iwasawa decomposition of $g \in \GL(2,\A_F)$ , where
\begin{align}\label{eq:iwasawa2}
g=uak, \text{ } u = \begin{pmatrix} 1 & \\ m_{21} & 1 \end{pmatrix}, \text{ } 
a = \begin{pmatrix} a_1 & \\ & a_2 \end{pmatrix}, \text{ }
k \in K := \prod_\nu K_\nu.
\end{align}
Here $m_{12}$ is an adele, and $a_1,a_2$ are \emph{ideles}, i.e. $(a_i)_\nu \in \cO_\nu^*$ for all but finitely many $\nu \nmid \infty$. 
\ignore{{\color{purple} Henry: I don't think this decomposition is unique right? I think we should explain in more detail how to come up with the decomposition, maybe written as an algorithm, mirroring a GS process } {\color{orange} If one fixes the ``sign'' of $a$, then it becomes unique. It's not really an important difference. I once had an explanation of the local version, but muted it, since I felt it doesn't really help understand the rest of the paper}{\color{purple} so fixing "sign" means taking them mod p-adic units ? Even then, playing around with the $F=\Q$ case and I feel like I can find multiple possible values of $m_\nu$ when $\nu$ divides the determinant of the matrix (to make things more precise I can multiply $k_\nu$ by a lower triangular unipotent with some off-diagonal $x\in\cO_\nu$, and in exchange this changes $u_\nu$ by shifting $m_\nu$ by $\alpha x$), but maybe that's cleared out by size reduction or I'm being dumb or something. In usual QR decomposition proofs, we use that orthogonal + triangular + signs has only one element, but that's not the case at finite places}
{\color{orange} oh no, you're not being dumb, quite the opposite, you made a very good point. I made the necessary clarifications above and below. Again for the algorithm it's not important that the expression is unique. And I kind of want to keep it that way for now because I don't want to introduce an extra condition like $\alpha$ needing to be integral or something.}} Below, whenever we discuss an element $g \in \GL(n,\A_F)$, the choice of the Iwasawa decomposition $g=uak$ will be given from the beginning, which is to be updated as it undergoes changes by our algorithm. Additionally, we write $v_i$ for the $i$-th row vector of $g$, and $\alpha_i = a_{i+1}/a_i$. In the $n=2$ case, we drop some of the subscripts, when there exists no room for ambiguity.

We state and prove a lemma generalizing the fact that any nonzero vector of a lattice spanned by the rows of $g \in \GL(n,\R)$ has length at least the minimal Gram-Schmidt coefficient of $g$.
\begin{lemma}\label{lemma:diagonal}
Let $g\in\GL(n,\A_F)$ have an Iwasawa decomposition $g=uak$, and denote $a = \diag(a_1,\ldots,a_n)$. Then
\begin{align*}
\eta(g) %\min_{v \in F^ng\backslash\{0\}}H(v)
\geq \min_{1 \leq i \leq n} H(a_i).
\end{align*}
\end{lemma}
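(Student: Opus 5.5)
Take any nonzero $v = xg$ with $x\in F^n$, and let $j$ be the largest index with $x_j\neq 0$. I will show $H(v) \ge H(a_j)$, which gives the lemma.

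The first step is to remove $k$ and $u$. Since $k_\nu\in K_\nu$ at every place, each local height is right-$K_\nu$-invariant. At real and complex places, orthogonal and unitary matrices preserve the Euclidean norm, and the complex height is its square. At finite places, $\GL(n,\cO_\nu)$ preserves the max-norm $\max_i|\cdot|_\nu$. So $H_\nu(v_\nu)=H_\nu((xua)_\nu)$ for all $\nu$, and hence $H(v)=H(xua)$.

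Next I look at the $j$-th coordinate. Since $u$ is lower-triangular unipotent, the $j$-th entry of $xu$ is $\sum_{i\ge j} x_i u_{ij} = x_j$, because $x_i=0$ for $i>j$ and $u_{jj}=1$. So the $j$-th coordinate of $xua$ is $x_j a_j$, viewed as an adele. At each place the local height is at least the local height of any single coordinate: this is clear for the max at finite places, for $\sqrt{\sum a_i^2}\ge|a_j|$ at real places, and for $\sum|a_i|^2\ge|a_j|^2$ at complex places, which matches $H_\nu$ of a scalar being $|\cdot|^{e_\nu}$. Taking the product over all places,
\begin{align*}
H(v)\ \ge\ \prod_\nu H_\nu\bigl((x_j a_j)_\nu\bigr) = H(x_j a_j) = H(x_j)\,H(a_j) = H(a_j),
\end{align*}
where multiplicativity holds because the one-dimensional local heights are absolute values, and $H(x_j)=1$ by the product formula since $x_j\in F^*$.

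Finally, minimizing over $v$ gives $\eta(g)\ge H(a_j)\ge \min_i H(a_i)$.

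The step most likely to need care is the finite-place invariance, together with checking that these infinite products make sense. For $x\in F^n$ and $g\in\GL(n,\A_F)$, almost all local factors equal $1$, and $a_j$ is an idele, so only finitely many factors differ from $1$ and the inequalities can be taken factorwise. I also need to confirm that the finite-place height is the max of $|\cdot|_\nu$, which is preserved by $\GL(n,\cO_\nu)$ because both $k_\nu$ and $k_\nu^{-1}$ have integral entries.
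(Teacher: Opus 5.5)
Your proof is correct and follows the paper's argument essentially verbatim: take the last nonzero coordinate $x_j$, remove $k$ using the right-$K_\nu$-invariance of the local heights, observe that the $j$-th entry of $xua$ is $x_ja_j$, and conclude $H(xua)\ge H(x_ja_j)=H(a_j)$ by the product formula. Your extra checks are details the paper leaves implicit: why $(xu)_j=x_j$, the ultrametric invariance of the max-norm under $\GL(n,\cO_\nu)$, and that only finitely many local factors differ from $1$.
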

\ignore{
{\color{orange} if in addition $g$ is reduced, is it possible to try enumerating points on module lattices?
}\phong{We would enumerate on the coefficients.
Can we find $u \in F^n\backslash\{0\}$
such that $H(ug)$ is minimal,
by "enumerating" the entries of $u$?}{\color{orange} Phong please elaborate (or maybe not) on this in paper, thanks}
\phong{It does seem that if $g$ is reduced and $H(ug) = \min_{v \in F^ng\backslash\{0\}}H(v)$
then the projective height $H(u)$ is bounded, so by Northcott's theorem, 
the number of $u$ (in the projective plane) is bounded.
Of course, this does not mean that this enumeration is computationally interesting:
we need to work out the bounds}}
\begin{proof}
Choose a vector $w=(w_1,\ldots,w_n) \in F^n$, and let $j$ be the largest index such that $w_j \neq 0$. First, notice that for each place $\nu$, right multiplication by $K_\nu$ preserves the local height at $\nu$. Therefore
%\begin{align*}
$H(wuak)=H(wua).$
%\end{align*}
Since the $j$-th entry of $wua$ is $w_ja_j$, we have
\begin{align*}
H(wua) \geq H(w_ja_j) = H(a_j),
\end{align*}
where the last equality is by the product formula.
\qed
\end{proof}

\subsection{Complexity}

We need to clarify the meanings of certain expressions to discuss the complexity of our algorithms below. Assume the knowledge of a basis $\mathcal B$ of $\cO_F$. An element $x \in F$ can be written as a $\Q$-linear combination of the elements of $\mathcal B$; by the \emph{bitsize} of $x$, we mean the number of bits needed to record this linear combination. Similarly, a fractional ideal of $F$ can be represented as its $\Z$-basis, which consists of $d$ elements of $F$; accordingly, its bitsize is the number of bits needed to record those $d$ elements. The computer representation of adeles will be discussed later, see section \ref{subsec:setup}.

By $\mathrm{poly}(x,y,z,\ldots)$ we mean a polynomial in variables $x,y,z,\ldots$. In this context, $\mathrm{poly}(\text{input)}$ means a polynomial in the total bitsize of all necessary input data for the algorithm under discussion.

\section{Adelic LLL in rank 2}

\subsection{Reduction notion}\label{subsec:reduction_notion}

Fix the parameters $0 < \mu, \delta < 1$ such that $\delta^\frac{2}{d}-\mu^2 > 0$. We need other parameters $A,B,C > 0$; their choice affects the quality-complexity tradeoff, and will be discussed more carefully below. Recall that we use the convention that $m$ and $\alpha$ are implicitly defined via the corresponding Iwasawa decomposition, such as~\eqref{eq:iwasawa2}.
\begin{definition}[Rank 2 reduced basis]\label{def:LLL-reduced-2}
We say that $g \in \GL(2,\A_F)$ is \emph{(LLL-)reduced} with respect to the above parameters if it has an Iwasawa decomposition $g= uak$ for which the following points hold
\begin{enumerate}[label=(\roman*)]
\item (size reduced) 
%$H_\f((m,1))^\frac{1}{d}\max_{\sigma \mid \infty}|m_\sigma| \leq \mu$, and $H_\f((m,1)) \leq C$ %{\color{orange} Notice that this is a Diophantine approximation statement.}
There exists $\tau_\sigma > 0$ for each of the $d$ embeddings $\sigma:F\hookrightarrow\C$ such that
\[\tau_\sigma \leq C^\frac{1}{d}, \text{ } \tau_\sigma|m_\sigma| \leq \mu \text{ and }
H_\f(m,1) \leq\prod_{\sigma:F\hookrightarrow\C}\tau_\sigma.\]

\item (Lov\'asz condition) $\delta \leq H((m,\alpha))$, or equivalently $\delta H((a_1,0)) \le H((m a_1,a_2))$.
%{\color{orange} The ``Siegel condition'' would be $\delta \leq H(\alpha)$}

\item (unit reduced) The entries of %\henry{I don't think $\Log$ was defined for idèles.} {\color{orange} sorry, fixed}
$$\Log(\alpha_\infty) - \frac{1}{d}\log H_\infty(\alpha_\infty)\cdot(\underbrace{1,\ldots,1}_\mathrm{real},\underbrace{2,\ldots,2}_\mathrm{complex})$$ are bounded by $A$.

\item (class reduced) $\alpha$ is integral (that is, $\alpha_\nu\in\cO_\nu$ for all finite $\nu$) and $B^{-1} \leq H_\f(\alpha)$.
\end{enumerate}
\end{definition}

This definition (as well as its rank $n$ counterpart Definition \ref{def:LLL-reduced-n}) is inspired as much by the reduction theory of $\GL(n,\A_F)$ (see e.g. \cite[Sec. 3.3]{Gar18}) as by the original LLL algorithm \cite{LLL82}. Indeed, just as the set of the classical LLL bases approximates the fundamental domain of $\SL(n,\Z)\backslash\SL(n,\R)$, the set of reduced bases as defined in this paper approximates the fundamental domain of $\GL(n,F) \backslash \GL(n,\A_F) / \prod_{\nu \nmid \infty} \GL(n,\cO_\nu)$. The corresponding conditions for the classical LLL algorithm \cite{LLL82} are consistent with the above in case $F=\Q$ and $A=0, B=1, C=1$, other than the minor difference that $\delta$ here is $\delta^2$ in some references.

% temporarily muting comments
\ignore{
{\color{purple} Henry: add a remark relating adelic-LLL to classical LLL, in particular, is $\delta$ correctly defined? (in the sense that it would be nice to recover LLL exactly if we set $F=\Q$)} {\color{orange} is this satisfactory? yes $\delta$ is same as in original LLL, it's $\mu$ that's a little different, but if $C=1$ then $\mu$ is forced to be $\geq 1/2$} {\color{purple} In my understanding the size reduction condition $C=1$ disallows denominators, and then the correct $\mu$ is $1/2$ (is this wrong?), from which we get $\delta\in(1/2,1)$, which is not as in the original LLL.} {\color{purple} I'm not yet convinced that the Lov\'asz condition is a direct generalisation of the LLL one : here is why: if $C=1$ we have no denominators, so $H_\infty(m,\alpha)=\sqrt{|m|^2+\alpha^2}$, and the finite part is just $1$, so the condition is $\delta\le \sqrt{|m|^2+\alpha^2}$, but in the classical LLL $\alpha=\frac{\|\mathbf b_2^*\|}{\|\mathbf b_1^*\|}$ and $m=\mu_{21}$ and the Lov\'asz condition can be written $\delta\le \mu_{21}^2+\frac{\|\mathbf b_2^*\|^2}{\|\mathbf b_1^*\|^2}$. Unless I misunderstood a definition, the classical $\delta$ is the square root of the adelic one, is this intended? I'm guessing yes, but there is a subtlety worth explaining here} {\color{purple} On another topic, should we mention Lagrange-Gauss? So $\delta\rightarrow1$ but maybe out of scope.} 
{\color{purple} Again regarding condition (ii), shouldn't the finite height also take $1$ into account as in $H_\f((m,\alpha,1))$ ? Otherwise if $m$ and $\alpha$ are both divisible by some $p$, then the height decreases (it should not, right?). Make the finite height projective?}

{\color{orange} I guess $\delta$ differing by square isn't really a big deal.. but I'll add a comment. Not sure if I want to mention Lagrange-Gauss --- I don't know if this touches on what you have in mind, but (i) it's unclear $\delta=1$ will get the algorithm to work (ii) the geometry of the number field is involved so it's unclear how well the set of bases approximate the fundamental domain --- if it's good, then that's another source of threat for module schemes, and it's a subject of further research. The condition (ii) is correct as written --- since the proofs in the next section go through.}
}
For (ii), we could have also used the Siegel condition as in \cite{LPSW19}, but just as in the classical case, the Lov\'asz condition is more likely to yield superior output quality in practice, since it induces a strictly stronger reduction notion.

% temporarily muting comments
\ignore{
\phong{Can you please correct me if I'm wrong?
The condition (ii) can be rewritten as $\delta H((a_1,0)) \le H((m a_1,a_2))$. 
Then the analogy with Lagrange/Gauss/LLL is direct.
In the Lagrange case,
we have two basis vectors $\vec{b}_1$ and $\vec{b}_2$.
$H(a_1,0)$ is the analogue of $\|\vec{b}_1\|$
and $H((m a_1,a_2))$ is the analoge of $\|\vec{b}_2\|$.
So we're asking that $\delta \|\vec{b}_1\| \le \|\vec{b}_2\|$,
which is exactly the relaxed Lagrange algorithm,
which was introduced by Vall\'ee in 1991:
in the real Lagrange, $\delta=1$,
but Vall\'ee introduced a virtual $\delta < 1$
for her worst-case complexity analysis:
we imagine a virtual Lagrange with $\delta < 1$ first,
then eventually $\delta=1$.
The Lagrange $\delta < 1$ is obviously polynomial time
because the norms decrease geometrically.
And once the basis is "weakly" reduced, Vall\'ee showed
that only $O(1)$ steps remain for the real Lagrange.
So here, the condition is really the analogue of relaxed Lagrange
(which is also 2-dimensional LLL). It also brings the open question:
in the Lagrange case, a small post-processing
is enough to move from a $\delta$-reduced basis
to a Minkowski-basis, but your comment suggests that
we don't know a similar phenomenon in the number field case.
}
{\color{orange} your first question --- yes that is correct. I'm a little afraid of bringing up Lagrange-Gauss since I guess it's supposed to be ``perfect'' reduction, but the geometry of the number field significantly gets in the way for adelic LLL to do that. Getting over this is an interesting future work. Over totally real fields the fundamental domain is well understood. CM fields, like cyclotomic fields, are much less so, though some math people do wish to understand it. You might want to briefly google ``fundamental domain of Bianchi groups'' and see the pretty pictures.}
}

It is possible to define the reduction notion with respect to a unique choice of Iwasawa decomposition, such as the one discussed in Section \ref{sec:Iwasawa} above. Though this would lead to a theoretically more satisfying version of adelic LLL, it is not at all necessary for our algorithm to work, and may likely add additional complications to the procedure. 
% temporarily muting comments
\ignore{
{\color{purple} I really like this comment! On the implementation side, it also means the algorithm is not fully specified if we don't explain exactly how to select a representative. Maybe we can explain this in an appendix (although I'm not a big fan of appendices). Update, not sure we need this in the implementation actually so ok to keep as is, but still it is strange to call it \emph{the} Iwasawa decomposition if it's not unique.}

{\color{orange} yes in the actual implementation we start out knowing the decomposition. If I said \emph{the} it was a mistake.}
}

\subsection{Overview of the algorithm}

We continue with the notations set up above. In addition, we denote the necessary subroutines as follows:
\begin{itemize}[label=\textbullet]
\item \texttt{Lovasz}$(g,\delta)$ returns \texttt{true} if $g$ satisfies the Lov\'asz condition, and \texttt{false} otherwise.
\item \texttt{class-reduce}$(g,B)$ ensures that $\alpha$ is integral and $B^{-1} \leq H_\f(\alpha)$.
\item \texttt{unit-reduce}$(g,A)$ ensures that $\alpha$ is unit reduced with respect to $A$, while preserving class reducedness.
\item \texttt{size-reduce}$(g,\mu,C)$ ensures that $g$ is size reduced with respect to the given parameters, while preserving class and unit reducedness. %{\color{purple} Henry: Not sure about notation = here. Maybe just say something like: if the parameters are clear from context, we write \texttt{size-reduce}$(g)$} {\color{orange} fixed to your liking :)}
\end{itemize}
When the parameters are clear from the context, we omit those and simply write \texttt{size-reduce}$(g)$, and so on.
\ignore{
\phong{I think it would help to say what kind of operations on $g$ is performed by these subroutines, so that we know
what is preserved: which uses unimodular transforms,
and which uses field transforms.
For instance, in LLL, we say that size reduction does not change the Gram-Schmidt vectors, but it may change the Gram-Schmidt coefficients and the basis vector norms.
} {\color{orange} done}
\phong{Thanks, so we never multiply $g$ to the right, using some $h$?} {\color{orange} that's correct, except in adelic swap}
\phong{I'm a bit confused. 
A swap of rows could be obtained by left multiplication,
but here, the adelic swap does make a right multiplication.
}{\color{orange} that's for an efficient implementation, not for conceptual reason}
}

The last three operations amount to multiplying $g$ from the left by an element of $\GL(2,F)$: for \texttt{class-reduce}, by a diagonal matrix; for \texttt{unit-reduce}, by a diagonal matrix with unit entries; for \texttt{class-reduce}, by a lower unipotent matrix. This helps one see the necessary preservation properties. In the following section, we will substantiate these functions in case the input is given by a pseudobasis.

Algorithm \ref{alg:lll_c} provides a high-level outline of the adelic LLL. At this point, we already have sufficient detail for us to prove its termination and output quality bounds.

\begin{algorithm}
\caption{Adelic LLL algorithm rank 2 (conceptual version)} \label{alg:lll_c}
\begin{algorithmic}[1]
\Require Parameters $\delta,\mu,A,B,C$ and $g=uak \in \GL(2,\A_F)$.
\Ensure LLL-reduced $g \in \GL(2,\A_F)$.
\State \textbf{while} true \textbf{do}
\State \ \ \texttt{class-reduce}$(g)$
\State \ \ \texttt{unit-reduce}$(g)$
\State \ \ \texttt{size-reduce}$(g)$ %// $g$ stays class and unit reduced
\State \ \ \textbf{if}(\texttt{Lov\'asz}$(g)$) \textbf{break loop}
\State \ \ \textbf{else} swap the rows of $g$
\State \textbf{return} $g$
\end{algorithmic}
\end{algorithm}
%\vspace{-\baselineskip}

\begin{proposition}[Rank 2 termination]\label{prop:swap_bound}
Algorithm \ref{alg:lll_c} terminates. In fact, if for some $\kappa>0$, 
$\eta(g) \geq \kappa$, then it takes at most
%$\min_{v \in F^2g\backslash\{0\}}H(v) \geq \kappa$, then it takes at most
$$\frac{\log {H(v_1)} - \log{\kappa}}{-\log \delta}$$
swaps to terminate. (Recall that we are denoting by $v_1$ the first row vector of $g$.)
\end{proposition}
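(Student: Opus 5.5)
The argument will follow the classical LLL potential argument, using $H(v_1)$ itself as the potential. It has three steps.

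First, I will check that $H(v_1)$ never increases during the non-swap steps of the loop. \texttt{class-reduce}, \texttt{unit-reduce} and \texttt{size-reduce} all multiply $g$ on the left by an element of $\GL(2,F)$ that is either diagonal or lower unipotent.
\begin{itemize}
\item A lower unipotent matrix leaves the first row $v_1$ unchanged.
\item A diagonal matrix $\diag(r_1,r_2)$ with $r_i\in F^*$ replaces $v_1$ by $r_1v_1$. By the product formula (as noted after the definition of height), $H(r_1v_1)=H(v_1)$.
\end{itemize}
Hence $H(v_1)$ is invariant under these three subroutines. None of these operations changes the coset $F^2g$, so $\eta(g)$ stays the same throughout. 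Swapping rows is also left multiplication by an element of $\GL(2,F)$, so $\eta$ is invariant under the whole algorithm.

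Second, I will show that each swap multiplies $H(v_1)$ by a factor $<\delta$. A swap happens only when the Lov\'asz condition fails, i.e.\ $H((ma_1,a_2))<\delta H((a_1,0))$. Using the Iwasawa decomposition $g=uak$, the rows of $g$ are $v_1=(a_1,0)k$ and $v_2=(ma_1,a_2)k$. Right multiplication by $k\in K$ preserves every local height, since $K_\nu$ is $\GL(2,\cO_\nu)$, orthogonal or unitary. Therefore $H(v_1)=H((a_1,0))$ and $H(v_2)=H((ma_1,a_2))$. After the swap the new first row is $v_2$, so the new value of $H(v_1)$ is less than $\delta$ times the old one. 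The algorithm then recomputes an Iwasawa decomposition for the swapped matrix; that recomputation is not needed for this inequality.

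Third, I conclude. After $N$ swaps, the current first row $v_1'$ satisfies $H(v_1')<\delta^N H(v_1)$, where $v_1$ is the initial first row. On the other hand, $v_1'\in F^2g\setminus\{0\}$, so $H(v_1')\ge \eta(g)\ge\kappa$. Taking logarithms gives $N\le (\log H(v_1)-\log\kappa)/(-\log\delta)$.

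For termination in general, I need some $\kappa>0$ with $\eta(g)\ge\kappa$. Lemma~\ref{lemma:diagonal} gives $\eta(g)\ge\min_i H(a_i)>0$, since $a_i$ are ideles and so have positive height; this provides such a $\kappa$. Between consecutive swaps, each loop iteration consists of a fixed finite sequence of subroutine calls, assumed to terminate. So the number of loop iterations is finite. The only delicate point is the height invariance under right multiplication by $K_\nu$, which is the local fact already used in the proof of Lemma~\ref{lemma:diagonal}.
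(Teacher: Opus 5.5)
Your proof is correct and follows the paper's argument. In both, $H(v_1)$ is the potential; it is unchanged by the left multiplications in \texttt{class-reduce}, \texttt{unit-reduce} and \texttt{size-reduce}. It drops by a factor below $\delta$ at each swap, since the failed Lov\'asz condition reads $H(v_2)<\delta H(v_1)$, and it is bounded below by $\kappa\le\eta(g)$. You also make explicit two points the paper leaves implicit: the invariance of $H(v_1)$ under diagonal left multiplication via the product formula, and the existence of some $\kappa>0$ via Lemma~\ref{lemma:diagonal}, which justifies the unconditional termination claim.
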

The above statement and its proof, as well as its generalization Lemma \ref{prop:swap_bound_n} in the next section, exactly parallel their classical counterparts. %\henry{It would be nice to compare $\min_{v \in F^2g\backslash\{0\}}H(v)$ with $\lambda_1^\mathcal{N}$ from LPSW (so as to help readers familiar with other notations)} {\color{orange} I'm not sure if they compare well; I may be mistaken but $\lambda_1^\mathcal{N}$ doesn't feel like an adequate notion to me}
Just as with the classical LLL, if we assume the input $g$ is given by an integral matrix, we can take $\kappa=1$. In general, if $g$ corresponds to a pseudobasis $\gb_1\vv_1+\gb_2\vv_2$ where $\vv_1,\vv_2\in\rho(\cO_F^2)$, then we can take $\kappa = N(\gb_1+\gb_2)$, as shown in Lemma \ref{lemma:lowest_ht} below.

\begin{proof}
From the descriptions provided above, it is clear that only Line 6, the row swapping, of Algorithm \ref{alg:lll_c} has the possibility of changing $E(g)$. 
%\phong{I'm a bit confused. Changing $E(g)$ is the same as changing the height of the first row of $g$. The height of an adelic vector (the entries are adeles) is invariant if I multiply the vector by a non-zero field element, but it's not necessarily invariant if I multiply to the left by an arbitrary matrix in GL($n,\F$). Why is it then that the subroutines which multiply to the left (class-reduction, unit-reduction, size-reduction) do not change the height of the first row? It must be special matrices. I'm really sorry for asking stupid questions.} 
%{\color{orange} you're correct if the matrix is arbitrary, but as clarified earlier, lines 2-4 are multiplications on the left by lower triangular matrices. These change the first row by a multiple of $F$ which does not change the height.}
%\phong{Thanks for the clarification. But in that case, why is the proof not simply using the height of the first row as a potential?} 
%{\color{orange} Yes it's wasteful writing. One may give a weak argument that it's a matter of taste since $E$ here as defined corresponds to what some math people refer to as the Weyl vector of $\GL_2$. I can get it changed.}

When a swap occurs, $H(v_1)=H((a_1,0))$ is replaced by $H(v_2)=H((m a_1,a_2))$. Since the Lov\'asz condition fails, we have $\delta H(v_1) >H(v_2)$, and thus $H(v_1)$ decreases at least by a factor of $\delta$. Since $H(v_1)$ cannot be lower than $\kappa$, this completes the proof.
\qed
\end{proof}
\begin{lemma}\label{lemma:lowest_ht}
Suppose $g \in \GL(2,\A_F)$ corresponds to the pseudobasis $\gb_1\vv_1+\gb_2\vv_2$
where $\vv_1,\vv_2\in\rho(\cO_F^2)$, i.e.,
\begin{align*}
g_\infty = \begin{pmatrix} \vv_1 \\ \vv_2 \end{pmatrix}, g_\nu=\begin{pmatrix} \pi_\nu^{-\ord_\nu\gb_1} & \\ & \pi_\nu^{-\ord_\nu\gb_2}\end{pmatrix}
\end{align*}
for each $\nu\nmid\infty$. Then
%\begin{align*}
$\eta(g) \geq N(\gb_1+\gb_2).$
%\min_{v \in F^2g\backslash\{0\}}H(v) \geq N(\gb_1+\gb_2).
%\end{align*}
\end{lemma}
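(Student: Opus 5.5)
Take an arbitrary nonzero $x=(x_1,x_2)\in F^2$ and bound $H(xg)$ below by splitting into the finite and infinite parts, $H(xg)=H_\f(xg)\,H_\infty(xg)$.

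\emph{Finite part.} At a finite place $\nu$ the $i$-th entry of $xg$ is $x_i\pi_\nu^{-\ord_\nu\gb_i}$. Hence
\[
\ord_\nu\bigl((xg)_\nu\bigr)=\min_i\bigl(\ord_\nu x_i-\ord_\nu\gb_i\bigr),
\]
so $H_\nu(xg)=N(\nu)^{-e_\nu}$ with $e_\nu:=\min_i(\ord_\nu x_i-\ord_\nu \gb_i)$ (the minimum over $i$ with $x_i\neq 0$). Let $\mathfrak c$ be the fractional ideal with $\ord_\nu\mathfrak c=e_\nu$ for every $\nu$. By construction $\mathfrak c$ is the largest fractional ideal with $x_i\in\mathfrak c\,\gb_i$ for $i=1,2$, and
\[
H_\f(xg)=\prod_\nu N(\nu)^{-\ord_\nu\mathfrak c}=N(\mathfrak c)^{-1}.
\]

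\emph{Infinite part.} The vector $(xg)_\infty=x_1\vv_1+x_2\vv_2$ is $\rho(y)$ for the nonzero vector $y:=x_1 w_1+x_2 w_2\in F^2$, where $\vv_i=\rho(w_i)$ with $w_i\in\cO_F^2$. Its coordinates $y_j=x_1w_{1j}+x_2w_{2j}$ lie in $\mathfrak c\gb_1+\mathfrak c\gb_2=\mathfrak c(\gb_1+\gb_2)$, because $x_i\in\mathfrak c\gb_i$ and the $w_{ij}$ are integral. Choose $j$ with $y_j\neq 0$. Then, entry by entry at each infinite place, $H_\sigma(y)\ge |\sigma(y_j)|^{e_\sigma}$ (for a complex place the local height is $\sum|a_i|^2\ge|a_j|^2$). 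Therefore
\[
H_\infty(xg)\ \ge\ \prod_{\sigma\mid\infty}|\sigma(y_j)|^{e_\sigma}=|N_{F/\Q}(y_j)|\ \ge\ N\bigl(\mathfrak c(\gb_1+\gb_2)\bigr),
\]
the last step because a nonzero element of a fractional ideal $\ga$ generates a principal ideal $y_j\cO_F\subseteq\ga$, so its absolute norm is at least $N(\ga)$.

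Multiplying the two parts gives
\[
H(xg)\ \ge\ N(\mathfrak c)^{-1}\,N(\mathfrak c)\,N(\gb_1+\gb_2)=N(\gb_1+\gb_2).
\]
Taking the minimum over $x\neq0$ yields $\eta(g)\ge N(\gb_1+\gb_2)$.

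The only delicate step is the identification of the ideal $\mathfrak c$ with the finite height and the check that $y_j\in\mathfrak c(\gb_1+\gb_2)$. This uses integrality of $\vv_1,\vv_2$, which the hypothesis $\vv_i\in\rho(\cO_F^2)$ provides. The rest is bookkeeping with the product formula conventions already fixed in Section~\ref{sec:height}.
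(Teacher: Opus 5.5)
Your proof is correct and is essentially the paper's argument: split $H(xg)$ into its finite and infinite parts, take a nonzero coordinate $y_j$ of $x_1\vv_1+x_2\vv_2$, and bound $|N_{F/\Q}(y_j)|$ from below by an ideal containment (the paper phrases this step via the product formula). The only difference is bookkeeping: you compute the finite height exactly as $N(\mathfrak c)^{-1}$ with $\mathfrak c=x_1\gb_1^{-1}+x_2\gb_2^{-1}$ and place $y_j$ in $\mathfrak c(\gb_1+\gb_2)$, whereas the paper first coarsens the finite part to $H_\f((x_1,x_2))\,N(\gb_1+\gb_2)$ and places $y_j$ in $x_1\cO_F+x_2\cO_F$.
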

\begin{proof}
Let us write $\beta_i$ for the finite adele such that $(\beta_i)_\nu = \pi_\nu^{\ord_\nu\gb_i}$, so that
\begin{align*}
g_\f = \begin{pmatrix} \beta_1^{-1} & \\ & \beta_2^{-1} \end{pmatrix}.
\end{align*}
Also let $\beta \in \A_\f$ be such that $\beta_\nu = \pi_\nu^{\min(\ord_\nu\gb_1,\ord_\nu\gb_2)}$. 

Take any $x_1,x_2 \in F$, not both zero. We have
\begin{align*}
H((x_1,x_2)g) = H_\f((x_1\beta_1^{-1},x_2\beta_2^{-1}))H_\infty(x_1\vv_1+x_2\vv_2).
\end{align*}
First notice
\begin{align*}
H_\f((x_1\beta_1^{-1},x_2\beta_2^{-1}))\geq H_\f(x_1\beta^{-1},x_2\beta^{-1}) \geq H_\f((x_1,x_2))H_\f(\beta^{-1}).
\end{align*}
% temporarily muting comments
\ignore{
{\color{brown} why is that true?
On the left, it's something like $\max(|x_1|_\nu \cdot |\beta_1^{-1}|_\nu, |x_2|_\nu \cdot |\beta_2^{-1}|_\nu)$.
On the right, it's the product of $ \max(|x_1|_\nu, |x_2|_\nu)$
with $\max(|\beta_1^{-1}|_\nu, |\beta_2^{-1}|_\nu)$.}

{\color{orange} $H_\f(x_1\beta_1^{-1},x_2\beta_2^{-1}) \geq H_\f(x_1\beta^{-1},x_2\beta^{-1})$, and you can pull out the scalar $\beta^{-1}$, so.. hope it helps}
}

As for $H_\infty(x_1\vv_1+x_2\vv_2)$, by considering a single nonzero coordinate of $x_1\vv_1+x_2\vv_2$, it is bounded from below by $H_\infty(y)$, where $y$ is some element of the ideal generated by $x_1,x_2$. By the product formula,
\begin{align*}
1 = H_\f(y)H_\infty(y) \leq H_\f((x_1,x_2))H_\infty(y).
\end{align*}
Combining the last two bounds, we have $H((x_1,x_2)g) \geq H_\f(\beta^{-1})$, which completes the proof.
\qed
\end{proof}

The following proposition shows that the first row vector of reduced $g$ is guaranteed to be short (in the sense of height) both absolutely~\eqref{ineq:Hermite} and relatively~\eqref{ineq:approx} , as with the classical LLL.
It does so by establishing an adelic analogue~\eqref{ineq:ratio}
of the well-known fact that in an LLL-reduced basis, the norms of the Gram-Schmidt
vectors decrease at most geometrically:
\begin{proposition}[Rank 2 quality] \label{prop:rhf_bound}
Let $A, B, C$ denote the parameters from Algorithm~\ref{alg:lll_c} and define
\begin{equation} \label{eq:Q}
  Q = \bigl(\delta^{2/d} - \mu^2\bigr)^{-d/4} (CB)^{1/2} e^{Ad/2}.
\end{equation}
If $g$ is reduced, then
\begin{equation} \label{ineq:ratio}
  \frac{H(a_1)}{H(a_2)} \le Q^2,
\end{equation}
\begin{equation} \label{ineq:Hermite}
  H(v_1) \le Q\, H(\det g)^{1/2},
\end{equation}
and
\begin{equation} \label{ineq:approx}
  H(v_1) \le Q^2 \eta(g).
%    H(v_1) \le Q^2 \min_{v \in F^2 \setminus \{0\} g} H(v).
\end{equation}
\end{proposition}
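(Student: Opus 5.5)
The plan is to prove \eqref{ineq:ratio} first, directly from the four conditions of Definition~\ref{def:LLL-reduced-2}. Then \eqref{ineq:Hermite} and \eqref{ineq:approx} follow formally, using Lemma~\ref{lemma:diagonal}. Since $\alpha=a_2/a_1$ and the height is multiplicative on ideles, \eqref{ineq:ratio} is equivalent to the lower bound
\[
H(\alpha)=H_\f(\alpha)H_\infty(\alpha)\ \ge\ Q^{-2}=(\delta^{2/d}-\mu^2)^{d/2}(CB)^{-1}e^{-Ad}.
\]
The class-reduced condition (iv) already gives $H_\f(\alpha)\ge B^{-1}$. So the work is to show $H_\infty(\alpha)\ge(\delta^{2/d}-\mu^2)^{d/2}C^{-1}e^{-Ad}$, and this lower bound is where I expect the main effort to lie.

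\emph{Step 1: the finite part of the Lov\'asz condition.} Since $\alpha$ is integral, at every finite place $\max(|m|_\nu,|\alpha|_\nu)\le\max(|m|_\nu,1)$. Hence $H_\f((m,\alpha))\le H_\f((m,1))\le\prod_\sigma\tau_\sigma$ by (i).

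\emph{Step 2: the infinite part.} I rewrite $H_\infty((m,\alpha))$ as a product over all $d$ embeddings, $\prod_{\sigma}(|m_\sigma|^2+|\alpha_\sigma|^2)^{1/2}$; a complex place contributes its square, which is its conjugate pair. Combining with Step 1, the Lov\'asz condition (ii) gives
\[
\delta\le\prod_\sigma\bigl(\tau_\sigma^2|m_\sigma|^2+\tau_\sigma^2|\alpha_\sigma|^2\bigr)^{1/2}\le\prod_\sigma\bigl(\mu^2+C^{2/d}|\alpha_\sigma|^2\bigr)^{1/2}.
\]
Here I used $\tau_\sigma|m_\sigma|\le\mu$ and $\tau_\sigma\le C^{1/d}$.

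\emph{Step 3: unit reduction.} Condition (iii) gives, for real $\sigma$, $|\log|\alpha_\sigma|-\tfrac1d\log H_\infty(\alpha)|\le A$. For complex $\sigma$ it gives the same with the factor $2$, hence a bound of $A/2$. In all cases $|\alpha_\sigma|^2\le e^{2A}H_\infty(\alpha)^{2/d}$. Setting $X=C^{2/d}e^{2A}H_\infty(\alpha)^{2/d}$, Step 2 yields $\delta\le(\mu^2+X)^{d/2}$. Therefore $X\ge\delta^{2/d}-\mu^2$, which rearranges to exactly the required lower bound on $H_\infty(\alpha)$. This proves \eqref{ineq:ratio}.

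\emph{Step 4: the remaining inequalities.} For \eqref{ineq:Hermite}, note that $v_1=(a_1,0)k$, and $K$ preserves local heights, so $H(v_1)=H(a_1)$. Also $\det g=a_1a_2\det k$ with $\det k$ a local unit at every place, so $H(\det g)=H(a_1)H(a_2)$. Then $H(v_1)^2=H(a_1)^2\le Q^2H(a_1)H(a_2)$.

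For \eqref{ineq:approx}, Lemma~\ref{lemma:diagonal} gives $\eta(g)\ge\min(H(a_1),H(a_2))$. If the minimum is $H(a_2)$, apply \eqref{ineq:ratio}. If it is $H(a_1)$, I need $Q\ge1$, which I would check as follows:
\begin{itemize}
\item $\delta^{2/d}-\mu^2<1$.
\item $B\ge1$, because integral $\alpha$ forces $H_\f(\alpha)\le1$, while (iv) gives $B^{-1}\le H_\f(\alpha)$.
\item $C\ge1$, because $H_\f((m,1))\ge1$ forces $\prod_\sigma\tau_\sigma\ge1$, while each $\tau_\sigma\le C^{1/d}$.
\end{itemize}
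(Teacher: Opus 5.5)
Your proposal is correct and follows essentially the same route as the paper. You reduce \eqref{ineq:ratio} to $H(\alpha)\ge Q^{-2}$, take $H_\f(\alpha)\ge B^{-1}$ from class reduction, and then bound the Lov\'asz quantity $H((m,\alpha))$ using integrality of $\alpha$, the size-reduction weights $\tau_\sigma$, and unit reduction, which gives $\delta^{2/d}\le\mu^2+C^{2/d}e^{2A}H_\infty(\alpha)^{2/d}$. Your derivation of \eqref{ineq:approx} from Lemma~\ref{lemma:diagonal}, including the check that $Q\ge 1$ when $\min(H(a_1),H(a_2))=H(a_1)$, fills in a step the paper leaves implicit.
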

%\phong{sorry: I had mixed up the $a_i$'s with the $v_i$'s in \eqref{ineq:ratio}. It should be fine now, no? }
\begin{proof}
Suppose $g$ is reduced.
First, notice that \eqref{ineq:ratio} immediately implies \eqref{ineq:Hermite}, because $H(\det g) = H(a_1) H(a_2)$
and $H(v_1) = H(a_1)$.
%$ = ${\color{red} this is not true and correct analogue is %$=H(a_1)H(a_2)$}
%and also~\eqref{ineq:approx} by Lemma~\ref{lemma:diagonal}.
It thus suffices to prove \eqref{ineq:ratio},
which is equivalent to proving that $H(\alpha) \geq 1/Q^2$.
\ignore{\henry{I don't think this is equivalent: it is only implied. Indeed $H(v_1)=H(a_1)$ and $H(v_2)=H(ma_1,a_2)=H(a_1)H(m,\alpha)\ge H(v_1)H(\alpha)$, therefore proving $H(\alpha)\ge 1/Q^2$ implies \eqref{ineq:ratio}, and the identity $H(\det g) = H(a_1)H(a_2)=H(a_1)^2H(\alpha)$ then implies \eqref{ineq:Hermite}.}}
The class reduced condition already implies a lower bound on $H_\f(\alpha)$, namely:
%\begin{align*}
$B^{-1} \leq H_\f(\alpha),$
%\end{align*}
therefore it remains to get a lower bound on $H_{\infty}(\alpha)$.
The Lov\'asz condition implies that
\begin{align*}
\delta \leq H((m,\alpha)) = H_\f((m,\alpha))H_\infty((m,\alpha)).
\end{align*}
The right-hand side is at most
\begin{align} \label{eq:lossy}
 H_\f((m,1))\prod_{i=1}^d\left(|\sigma_i(m)|^2 + (e^AH_\infty(\alpha)^\frac{1}{d})^2\right)^\frac{1}{2}
\end{align}
by the unit reduced condition and the fact that $H_\f((m,\alpha)) \leq H_\f((m,1))$, since $\alpha$ is integral. Definition of size reduction implies
\begin{align*}
H_\f((m,1))\prod_{i=1}^d\left(|\sigma_i(m)|^2 + (e^AH_\infty(\alpha)^\frac{1}{d})^2\right)^\frac{1}{2}&\leq\prod_{i=1}^d\left((\tau_{\sigma_i}\sigma_i(m))^2 + (\tau_{\sigma_i}e^AH_\infty(\alpha)^\frac{1}{d})^2\right)^\frac{1}{2} \\
&\leq \left(\mu^2 + (C^\frac{1}{d}e^AH_\infty(\alpha)^\frac{1}{d})^2\right)^\frac{d}{2}.
\end{align*}
Thus
%{\color{purple} Henry: Did you remove the unit-reduced condition in Alg 1? I'm having a hard time following this proof, will go to sleep now and come back to it later...} {\color{orange} I did since it turned out unnecessary. But the argument here is precisely that one can pretend it's unit-reduced, because it does not change the height}
\begin{align*}
\delta^\frac{2}{d} \leq \mu^2 + (C^\frac{1}{d}e^AH_\infty(\alpha)^\frac{1}{d})^2,
\end{align*}
from which one can compute and deduce
\begin{align*}
&  (\delta^\frac{2}{d}-\mu^2)^{\frac{d}{2}}C^{-1}e^{-Ad} \leq H_\infty(\alpha).
\end{align*}
\ignore{
\Rightarrow& H_\infty(a_1) \leq (\delta^\frac{2}{d}-\mu^2)^{-\frac{d}{4}}C^\frac{1}{2}e^{\frac{Ad}{2}}H_\infty(\det g)^\frac{1}{2}.
\end{align*}
}
Hence, we proved that:
$$ H(\alpha) = H_\f(\alpha) H_\infty(\alpha) \ge B^{-1} (\delta^\frac{2}{d}-\mu^2)^{\frac{d}{2}}C^{-1}e^{-Ad}  = \frac{1}{Q^2},$$
which implies \eqref{ineq:ratio}.
\ignore{
In summary, we obtain the RHF bound
\begin{align*}
H(a_1) \leq (\delta^\frac{2}{d}-\mu^2)^{-\frac{d}{4}}(CB)^\frac{1}{2}e^{\frac{Ad}{2}}H(\det g)^\frac{1}{2}.
\end{align*}
Recall that $H(a_1)=H(v_1)$. }
\qed
\end{proof}
It is likely that, in practice, we will see a much better RHF than the bound given by Proposition \ref{prop:rhf_bound}, since we lost quite a bit in the theoretical analysis above by bounding $\delta$ by \eqref{eq:lossy}. It is also to be reminded that $|\Delta_F|^\frac{1}{2}H(v_1)$ equals the determinant of a dense rank $1$ module lattice of $L(g)$. Thus for comparisons with algorithms that finds a short vector, one should take the $d$-th root of the above bound. 
%{\color{purple} This raises a few questions regarding how different the Euclidean norms and the heights can actually be. An interesting application would be the following: I give you a $2d\times 2d$ NTRU basis with integer coordinates, what is the condition for adelic-LLL to recover the dense sublattice? Do we always have $\det L(g) = |\Delta_F|^{1/2}H(\det g)$?} {\color{orange} yes these are good topics for further research. Also the correct formula is $\det L(g) = |\Delta_F|H(\det g)$. Basically, $|\Delta_F|^{n/2}$ is the determinant of the canonical embedding $\rho:F^n\rightarrow (F\otimes\Q)^n$ (over $\Q$-vector spaces), and each $H_\sigma(\det g)$ gives the change of volume at each place $\sigma$. }

\subsection{Details on each routine}

In this section, we fill in all the black boxes from Algorithm \ref{alg:lll_c}, assuming that the input $g$ is given as a pseudobasis.

\subsubsection{Setup}\label{subsec:setup}
We assume the knowledge of $\Z$-bases $\mathcal B = \{w_1,\ldots,w_d\}$ of $\cO_F$, and $\mathcal U$ of any finite index sublattice of the unit lattice $\Log(\cO_F^*)$. The quality of these bases will affect the efficiency of our algorithm, which may be nontrivial to ensure for certain fields. However, in the practically most relevant case of $F=\Q(\zeta_{2^\kappa})$, a power-of-$2$ cyclotomic field, there exist well-known excellent choices for both $\mathcal B$ and $\mathcal U$.
%Let $\mathcal F \subseteq F \otimes \R$ be the fundamental parallelepiped of $\rho(\cO_F)$ corresponding to this basis that is centered at the origin, and choose any $\ell>0$ satisfying $\ell^{-1}\max_{\sigma\mid\infty,\vx\in\mathcal F}|\sigma(\vx)| \leq \mu$. For example, if $F=\Q(\zeta_{2^\kappa})$ is a power-of-$2$ cyclotomic fields, one can use the well-known choice for $\mathcal B$ and deduce the corresponding $\ell$ from it; for $\mathcal U$, one can take the cyclotomic units (\emph{cf.} \cite{CDPR16}). 

We will also need an SVP-approximation solver for lattices over $\Z$ as a subroutine. To this end, one is free to choose any algorithm between LLL and an exact SVP solver, or even make an adaptive choice each time. We write $\gamma_k$ for the root Hermite factor for the algorithm we use on rank $k$ lattices, that is, $\gamma_k$ is a quantity satisfying
\begin{align*}
\|\vv\|\leq\gamma_k^k(\det \Lambda)^\frac{1}{k}
\end{align*}
where $\Lambda$ is any rank $k$ lattice and $\vv\in\Lambda$ is any short vector found by that algorithm.

We follow \cite{LPSW19} in our choice of the input module: it is given as $\gb_1 \vv_1 + \gb_2\vv_2 \subseteq \rho(\cO_F^2)$, where $\gb_1,\gb_2$ are fractional ideals of $F$ and $\vv_1,\vv_2 \in \rho(\cO_F^2)$ are linearly independent over $F$. 
%Following Cohen \cite{Coh96} we assume that ideals are given as an (HNF) matrix with respect to $\mathcal B$.

As explained previously, our module corresponds to $g \in \GL(2,\A_F)$ where
\begin{align*}
g_\infty = \begin{pmatrix} \vv_1 \\ \vv_2 \end{pmatrix}, g_\nu=\begin{pmatrix} \pi_\nu^{-\ord_\nu(\mathfrak b_1)} & \\ & \pi_\nu^{-\ord_\nu(\mathfrak b_2)} \end{pmatrix}
\end{align*}
for each $\nu \nmid \infty$. We introduce a more classical way to express $g$ that will be easier to implement in code. We rewrite $M_\infty = g_\infty$, and for each embedding $\sigma:F \rightarrow \C$, we let
\begin{align*}
M_\sigma = \begin{pmatrix} \sigma\circ\rho^{-1}(\vv_1) \\ \sigma\circ\rho^{-1}(\vv_2) \end{pmatrix} = \begin{pmatrix} 1 & \\ m_\sigma & 1 \end{pmatrix}\begin{pmatrix} a_{1\sigma} & \\ & a_{2\sigma}\end{pmatrix}U_\sigma
\end{align*}
where $a_{1\sigma},a_{2\sigma}>0$ and $U_\sigma \in K_\sigma$, and
\begin{align*}
M_I = \begin{pmatrix} \gb_1^{-1} & \\ & \gb_2^{-1} \end{pmatrix},
\end{align*}
an ideal-valued matrix. Clearly $M_\infty$ can be identified with $(M_\sigma)_{\sigma\mid\infty}$. It may also be helpful to construe $M_I$ as $\prod_{\nu\nmid\infty}g_\nu$ in the arguments to follow.

Additionally, we define an auxiliary variable for our algorithm
\begin{align*}
M_F = \begin{pmatrix} 1 & 0 \\ m_F & 1 \end{pmatrix} \in \GL(2,F),
\end{align*}
where $m_F=0$ initially. We will write $m$ for the adele such that $m_\nu = (m_F)_\nu$ for finite $\nu$, and, for $\sigma \mid \infty$, whose $\sigma$-component coincide with $m_\sigma$ above. Similarly, for $i=1,2$, $a_i$ will be the adele for which $(a_i)_\nu = \pi_\nu^{-\ord_\nu\mathfrak b_i}$ for finite $\nu$, and $(a_i)_\sigma = a_{i\sigma}$ for infinite $\sigma$. Hence $g$ has an Iwasawa decomposition
\begin{align*}
g = \begin{pmatrix} 1 & \\ m & 1\end{pmatrix}
\begin{pmatrix} a_1 & \\ & a_2 \end{pmatrix}
\prod_{\sigma\mid\infty}U_\infty,
\end{align*}
and $M_F,M_I,M_\infty$ simply encode the same information in the classical language.

\subsubsection{Size reduction}

%\vspace{-\baselineskip}
\begin{algorithm} 
\caption{\texttt{size-reduce}} \label{alg:sizereduce}
\begin{algorithmic}[1]
\Require Parameters $\mu,C$, a subfield $E\subseteq F$ of degree $d_E$, matrices $\mathbf M$ and $\mathbf M_E$ representing bases of $\rho(\cO_F)$ and $\rho(\cO_E)$, an HSVP solver in dimension $d+d_E$ 
%with RHF factor $\gamma_{d+d_E}$ satisfying $$C \geq \mu^{-\frac{d^2}{d_E}} \gamma_{d+d_E}^{d(d+d_E)(1+\frac{d}{d_E})}\Delta^{\frac{d}{d_E}},$$ (here $\Delta =\sqrt{(d/d_E)^{d_E}\Delta_E\Delta_F}$)
and input $(m_\sigma)_{\sigma:F\rightarrow\C}$
%\Ensure 
%Returns $r \in F$ such that $(-r,m-r)\in\A_\f\times\A_\infty$ is size-reduced
%Returns $p,q \in \cO_F$ such that $|q_\sigma m_\sigma-p_\sigma|\leq\mu$ and $|q_\sigma|\leq C^{1/d}$

%\State Compute the GSO coefficients of $M_\infty$ to find $\rho(m_\infty)$
%\State Compute reals $m_i$ such that $\vm_\infty = \sum m_i\rho(w_i)$
\State %$\omega \leftarrow \mu^\frac{d+d_E}{d_E}\gamma_{d+d_E}^{-\frac{(d+d_E)^2}{d_E}}\Delta^{-\frac{1}{d_E}}$ %\henry{This should depend on $E$ right?} fixed -sk
$\omega \leftarrow \mu C^{-1/d}$
\State $\displaystyle T \leftarrow \begin{pmatrix} \omega \mathbf M_E & \mathbf M_E\diag(m_\sigma) \\  & -\mathbf M \end{pmatrix}$
\State Use the HSVP solver on $T$ to obtain $\vx=(q_1,\ldots,q_{d_E},p_1,\ldots,p_d)$ such that $\vx T$ is short

%a nonzero vector of the form $(\omega \rho(q),\rho(qm_\infty-p))$, $q\in\cO_E$ and $p \in \cO_F$, of length $\leq C$
%$\displaystyle \leq \gamma_{d+d_E}^{d+d_E}\omega^{\frac{d_E}{d+d_E}}\Delta^{\frac{1}{d+d_E}}$
%\henry{Shouldn't this quantity somehow depend on $C$ ? otherwise there is no point in passing $C$ as a parameter. What I had in mind is: fix input $\mu,C$. Then the algorithm picks the best choice of $\omega$ depending in $\gamma$ and $C$. I find the pseudo-code very unhelpful in its current state (realised this was an issue while implementing).} {\color{orange} I guess I'm really using $C$ as a shorthand for the complicated expression above, and taking it to equal to that expression... how do you suggest to fix it?}

\State \textbf{return} $\displaystyle p\leftarrow (p_1,\ldots,p_d)\mathbf M,q \leftarrow (q_1,\ldots,q_{d_E})\mathbf M_E$ 
%\henry{one might argue that recovering $p,q$ from their embeddings is not trivial because of precision issues. If needed I can try to discuss this in the appendix. Otherwise just working with Gram Matrices should help here, maybe again not relevant to the core algorithm so I'm happy writing about this in the appendix} %// In rank 2 adelic LLL, we will have $\displaystyle m_F \leftarrow -p/q, M_\infty \leftarrow \begin{pmatrix} 1 & \\ -\rho(p/q) & 1 \end{pmatrix}M_\infty$ {\color{orange} hmm I guess in reality we'll be returning $p,q$ separately}
\end{algorithmic}
\end{algorithm}

We will compute $p,q\in\cO_F$ such that
\begin{align}\label{eq:sizereduce_new}
|q_\sigma m_\sigma -p_\sigma| \leq \mu \mbox{ and } |q_\sigma| \leq C^\frac{1}{d}
\end{align}
for every embedding $\sigma:F\hookrightarrow\C$. Let us first explain how this leads to size reduction. Write $r=p/q \in F$. We assume that the algorithm \texttt{size-reduce} has input $m=((m_F)_\f,m_\infty) \in \A_F$ such that $m_F=0$. It will ultimately replace $m$ by $m-r$. Note that $H_\f((m-r,1)) \leq H_\f(q^{-1}) = H_\infty(q)$, the former since $m_\f = (m_F)_\f=0$, and the latter by the product formula. Then, with $\tau_\sigma=|q_\sigma|$, $H_\infty(q) =\prod_{\sigma:F\hookrightarrow\C}\tau_\sigma$ holds by the definition of $H_\infty$, and moreover, \eqref{eq:sizereduce_new} translates to
\begin{align*}
\tau_\sigma|(m-r)_\sigma| \leq \mu \mbox{ and } \tau_\sigma \leq C^\frac{1}{d},
\end{align*}
satisfying all the conditions for size reducedness indeed.

It remains to show how to obtain \eqref{eq:sizereduce_new}. Let
\begin{align*}
\mathbf M = \begin{pmatrix} - & \rho(w_1) & - \\ & \vdots & \\ - & \rho(w_d) & - \end{pmatrix}.
\end{align*}
Similarly, for any choice of a subextension $\Q \subseteq E \subseteq F$ of degree $d_E$, let $\mathbf M_E$ be the matrix whose rows consist of a $\Z$-basis of $\rho(\cO_E)$. Let $\omega = \mu C^{-1/d}$, and consider the rank $d+d_E$ lattice spanned by the rows of
\begin{align}\label{eq:thematrix}
\begin{pmatrix}
\omega \mathbf M_E & \mathbf M_E\diag(m_\sigma) \\  & -\mathbf M
\end{pmatrix}.
\end{align}
Observe that this lattice 
%\henry{I find that the notation $\diag(\rho(m_\infty))$ is very confusing. Do you mean the coefficients of $\mathbf m_\infty$ in the embedded basis of $\cO_F$ ?}
has determinant $\omega^{d_E}\Delta$, where $\Delta = \sqrt{(d/d_E)^{d_E}|\Delta_E\Delta_F|}$,\footnote{By a straightforward argument, one can show that the sublattice $\rho(\cO_E)$ of $\rho(\cO_F)$, where $\cO_E$ is seen as a subset of $\cO_F$, has determinant $\sqrt{(d/d_E)^{d_E}|\Delta_E|}$.} and its vectors are of the form
$(\omega \rho(q), \rho(qm_\infty-p))$ for $q \in \cO_E, p\in\cO_F$. Running our choice of an HSVP solver, we obtain a nonzero vector of such form of length at most $$\lambda = \gamma_{d+d_E}^{d+d_E}\omega^{\frac{d_E}{d+d_E}}\Delta^{\frac{1}{d+d_E}}.$$
Assume $C^{1/d}\ge \lambda\omega^{-1}$, then equivalently by definition of $\omega$, we get $\mu\ge \lambda $ and thus
$|q_\sigma m_\sigma-p_\sigma| \leq \mu$ for every $\sigma$. We also have $\omega|q_\sigma| \leq \mu$, or equivalently $|q_\sigma| \leq C^{1/d}.$
%Accordingly, we can take 
%$$C = \mu^{-\frac{d^2}{d_E}} \gamma_{d+d_E}^{d(d+d_E)(1+\frac{d}{d_E})}\Delta^{\frac{d}{d+d_E}(1+\frac{d}{d_E})}.$$
Note that $q=0$ is impossible, since it would imply $\rho(p)$ has length at most $\mu < 1$, whereas it is well-known that a nonzero vector of $\rho(\cO_F)$ has length at least $\sqrt{d}$. This proves that \eqref{eq:sizereduce_new} can indeed be attained, as long as $C^{1/d}$ is chosen to be greater than or equal to 
\begin{align}\label{eq:C^1/d}
\mu^{-\frac{d}{d_E}} \gamma_{d+d_E}^{(d+d_E)^2/d_E}\Delta^{\frac{1}{d_E}}.
\end{align}

The above discussion is summarized in pseudocode form in Algorithm \ref{alg:sizereduce}. As an example, we can take $E=F$ and $C= \mu^{-d}\gamma_{2d}^{4d^2}|\Delta_F|$; or if we take $E=\Q$, then $C= \mu^{-d^2}\gamma_{d+1}^{d(d+1)^2}(d|\Delta_F|)^{d/2}$ is a suitable choice. %Since the power of $\gamma$ tends to make up the largest contribution, it is helpful to think $C \approx \gamma^{O(d^3/d_E)}$.

%{\color{orange} so in this correction, we lost by a factor of 2, which I'm afraid may wipe out the advantage... but it's better than being wrong. One may consider an alternative strategy where one takes $q$ from a subfield of $F$.}

% muting out old version
\ignore{
Start by computing $m_i \in\R$ such that 
\begin{align*}
\vm_\infty := (m_\sigma)_{\sigma\mid\infty} =  m_1\rho(w_1)+\ldots+m_d\rho(w_d).
\end{align*}
We can use the classical LLL (cf. \cite[(1.39)]{LLL82}) --- or a stronger reduction if desired --- to find integers $p_1,\ldots,p_d,q$ such that
\begin{align*}
|p_i-qm_i| \leq \varepsilon, 1 \leq q \leq (4/3)^{d(d+1)/4}\varepsilon^{-d}
\end{align*}
with $\varepsilon = (2\ell)^{-1}$. Taking
\begin{align*}
r = \frac{1}{q}(p_1w_1+\ldots+p_dw_d) \in q^{-1}\cO_F
\end{align*}
and setting $C = (4/3)^{d(d+1)/4}(2\ell)^{d}$, this implies {\color{purple} fixing a value for C here is a bit awkward. Maybe we should have a proposition that says "if $C = (4/3)^{d(d+1)/4}(2\ell)^{d}$, then Alg. 2 with LLL as step 3 is correct", so as to keep some flexibility on the CVP oracle}
{\color{orange} That's exactly what I meant to say... please feel free to edit to your liking without color}
\begin{align}
\label{eq:msize}
H_\f((m-r,1))^\frac{1}{d}\max_{\sigma\mid\infty}|\sigma(m-r)| &\leq q\cdot\max_{\sigma\mid\infty}\left|\sigma\left(\sum_j(m_j-\frac{p_j}{q})w_j\right)\right| \\
\notag
&\leq \ell^{-1}\max_{\sigma\mid\infty,\vx\in\cal F}|\sigma(\vx)| \leq \mu
\end{align}
{\color{purple} Here we used $q^{1/d}\le q$ right? Changed $w_i$ to $w_j$} {\color{orange} Actually, since the denominator of $m-r$ is integer $\leq q$, we have $H_\f((m-r,1)) \leq q^d$. So it's not as wasteful as what you said.} and \henry{In the next equation, where did the power of $d$ go?}
\begin{align*}
H_\f((m-r,1)) \leq q \leq C
\end{align*}
as desired. 
Update $m_F \leftarrow -r$ and $\vm_\infty \leftarrow \vm_\infty - \rho(r)$, or equivalently {\color{purple} here $r$ is both the field element and the adele, because of the definition of $\rho$, I find $\rho(r_\infty)$ to be a bit confusing. In fact this might turn into a more general comment: algorithmically, it seems much safer (because of precision issues with representing complex numbers) to track the information of $M_\infty$ via $M_F$, which can be represented symbolically using only rational numbers (although they might be very large if we don't use the scale() algorithm). In the end I don't think we need to speak about adeles in the algorithm?} {\color{orange} good point, fixed. Regarding $\rho(r)$, as you probably noticed, I'm trying to be conscious of the distinction between $F$ and $\rho(F)$, or elements that are not assigned the coordinates via canonical embedding vs. elements that are.}{\color{purple} I think thats really useful}$$M_\infty \leftarrow \begin{pmatrix} 1 & \\ -\rho(r_\infty) & 1 \end{pmatrix}M_\infty.$$

%{\color{orange} Bitsize analysis: $r$ takes at most $O(\log q \cdot \log\max_im_i)$ bits to express. $m_\sigma$ is bounded by $\mu q$ thanks to \eqref{eq:msize}.}

Let us discuss the choice of $C$ in which we replace LLL by an SVP oracle for ($\Z$-)rank $d+1$ lattices. Then following the argument of \cite[(1.39)]{LLL82}, and by Minkowski's bound on the shortest vector of a lattice, we can replace the above constraint on $q$ by
\begin{align*}
1 \leq q \leq \sqrt{d+1}^{d+1}\varepsilon^{-d},
\end{align*}
and set $C = \sqrt{d+1}^{d+1}(2\ell)^d$.

{\color{orange} I feel like I can reduce this to ideal-SIVP.. anyway for now write wrt slide algorithm (or SDBKZ) since that's more meaningful}
}

\subsubsection{Lov\'asz test}

%\vspace{-\baselineskip}
\begin{algorithm}
\caption{\texttt{Lovasz-test} rank 2}
\begin{algorithmic}[1]
\Require Parameter $\delta$, and inputs $m_F,M_I,M_\infty$
\State \textbf{if}
$$
\delta N((m_F\cO_F+\gb_1\gb_2^{-1}))\leq \prod_{\sigma:F\rightarrow \C}|(m_\sigma,a_{2\sigma}/a_{1\sigma})|
$$
\textbf{return} true
\State \textbf{else} \textbf{return} false
\end{algorithmic}
\end{algorithm}
%\vspace{-\baselineskip}
If the Lov\'asz condition
\begin{align*}
\delta H((a_1,0)) \leq H((m a_1,a_2)),
\end{align*}
or equivalently as a product over all embeddings,
\begin{align*}
\delta N(\gb_1)\prod_{\sigma:F\hookrightarrow \C} |a_{1\sigma}| \leq N((m_F\gb_1^{-1}+\gb_2^{-1}))^{-1}\prod_{\sigma:F\hookrightarrow \C}|(m_\sigma a_{1\sigma},a_{2\sigma})|,
\end{align*}
or the further simplified
\begin{align*}
\delta N((m_F\cO_F+\gb_1\gb_2^{-1}))\leq \prod_{\sigma:F\hookrightarrow \C}|(m_\sigma,a_{2\sigma}/a_{1\sigma})|,
\end{align*}
%(here beware that the product is taken over all \emph{embeddings} $\sigma$, not \emph{places})
is fulfilled, 
then we terminate the algorithm. If not, we carry out the adelic swap step below. %\henry{I guess now this should be class reduction instead of size reduction as it comes before in order}{\color{orange} fixed the wording a bit}

\subsubsection{Adelic swap}
%\vspace{-\baselineskip}
\begin{algorithm}
\caption{\texttt{adelic-swap} rank 2} 
\label{alg:adelicswap}
\begin{algorithmic}[1]
\Require $m_F = x/y$ ($x,y \in \cO_F$),$M_I,M_\infty$ that failed \texttt{Lovasz-test}
%\Ensure $M_F,M_I,(M_\sigma)_{\sigma\mid\infty}$ modified for next \texttt{size-reduce}
%\State Compute $x,y\in\cO_F$ such that $m_F = x/y$
\State Compute $c \in \cO_F$ such that $cx\gb_1^{-1},cy\gb_2^{-1} \subseteq \cO_F$
\State $\gd \leftarrow cx\gb_1^{-1}+cy\gb_2^{-1}$
\State Compute $u\in(\gb_1\gd)^{-1},v\in(\gb_2\gd)^{-1}$ such that $cxu+cyv=1$ %{\color{purple} technically we don't need to know $v$ here I guess} {\color{orange} yes but it comes in pair with $u$ so..}
\State $\displaystyle m_F \leftarrow 0, M_I \leftarrow \begin{pmatrix} \gd & \\ & (\gb_1\gb_2\gd)^{-1}\end{pmatrix}, M_\infty \leftarrow \begin{pmatrix} & \rho(cy) \\ \rho(1/(cy)) & \rho(-u) \end{pmatrix}M_\infty $
\end{algorithmic}
\end{algorithm}
%\vspace{-\baselineskip}
If \texttt{Lovasz-test} returns false, we swap the two rows of $g$, as in the original LLL algorithm. This maneuver is equivalent to swapping the rows of $M_\infty$, and also swapping the rows of $M_F$, so that
\begin{align*}
M_F = \begin{pmatrix} m_F & 1 \\ 1 & \end{pmatrix}, M_I = \begin{pmatrix} \mathfrak b_1^{-1} & \\ & \mathfrak b_2^{-1} \end{pmatrix}.
\end{align*}
$M_I$ here is correct to remain unchanged, since, from the adelic point of view, on each finite place $\nu$,
\begin{align}\label{eq:swap_adelic}
\begin{pmatrix} & 1 \\ 1 & \end{pmatrix}\begin{pmatrix}1 & \\ m_\nu & 1\end{pmatrix}\begin{pmatrix}a_{1\nu} & \\ & a_{2\nu} \end{pmatrix}= \begin{pmatrix}m_\nu & 1 \\ 1 & \end{pmatrix}\begin{pmatrix}a_{1\nu} & \\ & a_{2\nu} \end{pmatrix}.
\end{align}
We would like to carry out some manipulations, so as to be able to rewrite $M_F$ as the identity element, so that our data $M_F,M_I,M_\infty$ comes back to representing a pseudobasis for the next iteration of \texttt{size-reduce}.
\ignore{
\phong{What I meant to say, is that in theoretical descriptions of LLL, we could just recompute GSO at every step:
updates of GSO only occur when we want to optimize the implementation. 
Here, similarly, after you swap the two rows, you could just "recompute"
the Iwasawa decomposition, but we're doing this gymnastics
because we care about efficiency: is that correct?}{\color{orange} yes, and we do need to care, since in principle, one would compute the decomposition place by place, which involves factorization}}
%\phong{Is it correct to say that this is the equivalent of the Gram-Schmidt update in integer LLL after swap?} {\color{orange} Over the infinite places yes, but there is an extra step to make $m_F=0$. I don't know if the manipulation on the right by $\GL_2(\cO_\nu)$ has a classical analogue --- you're the best person to answer this}

Start by writing $m_F = x/y$ for $x,y\in \cO_F$; \ignore{{\color{purple} you mean x,y no?}{\color{orange}thanks }}note that this is already accomplished in the \texttt{size-reduce} step with $y = q$. Compute nonzero $c \in \cO_F$ such that $cx\gb_1^{-1}, cy\gb_2^{-1}$ are both integral. Suppose $cx\gb_1^{-1} + cy\gb_2^{-1} = \gd$. Then by Theorem 1.2 of [Coh96], we can efficiently compute $u \in (\gb_1\gd)^{-1}, v \in (\gb_2\gd)^{-1}$ such that
%\begin{align*}
$cx u + cy v = 1.$
%\end{align*}

Fix $\nu\nmid\infty$. Let us write $d_\nu = \pi_\nu^{-\ord_\nu\mathfrak d}$. Now consider the $\nu$-adic matrix
\begin{align*}
\begin{pmatrix}ua_1^{-1} & cy a_1^{-1} \\ va_2^{-1} & -cx a_2^{-1} \end{pmatrix} \begin{pmatrix}d^{-1} & \\ & a_1a_2d\end{pmatrix} = \begin{pmatrix} ua_1^{-1}d^{-1} & cy a_2d \\ va_2^{-1}d^{-1} & -cx a_1d \end{pmatrix}
\end{align*}
where we suppress all the $\nu$ subscripts, and temporarily tolerate the notational overlap of $d$ with the field degree. By construction, this is a matrix of determinant -1 consisting of entries in $\cO_\nu$, hence an element of $\GL_2(\cO_\nu)$. Therefore we can multiply this element to the right of $g$ without changing $L(g)$. Over each $\cO_\nu$, we would multiply it to the right of \eqref{eq:swap_adelic}, and obtain
\begin{align*}
\begin{pmatrix}m & 1 \\ 1 & \end{pmatrix}\begin{pmatrix}a_1 & \\ & a_2 \end{pmatrix}\begin{pmatrix} ua_1^{-1}d^{-1} & cy a_2d \\ va_2^{-1}d^{-1} & -cx a_1d \end{pmatrix}  =\begin{pmatrix}x/y & 1 \\ 1 & \end{pmatrix}\begin{pmatrix} ud^{-1} & cy a_1a_2d \\ vd^{-1} & -cx a_1a_2d
\end{pmatrix} \\
= \begin{pmatrix}x/y & 1 \\ 1 & \end{pmatrix}\begin{pmatrix}
u & cy \\ v & -cx
\end{pmatrix}
\begin{pmatrix} d^{-1} & \\ & a_1a_2d
\end{pmatrix} 
= \begin{pmatrix} 1/(cy) & \\ u & cy\end{pmatrix}\begin{pmatrix} d^{-1} & \\ & a_1a_2d
\end{pmatrix}.
\end{align*}
In the classical language, this corresponds to setting
\begin{align*}
M_F &= \begin{pmatrix} x/y & 1 \\ 1 & \end{pmatrix}\begin{pmatrix} u & cy \\ v & -cx\end{pmatrix} = \begin{pmatrix} 1/(cy) & \\ u & cy \end{pmatrix}, \\ 
M_I &= \begin{pmatrix} \gd & \\ & (\gb_1\gb_2\gd)^{-1} \end{pmatrix}.
\end{align*}
Next, we multiply $g$ from the left by
\begin{align*}
\begin{pmatrix} cy & \\ -u & 1/(cy) \end{pmatrix} \in \GL(2,F).
\end{align*}
In the classical language, this turns $M_F$ back to the identity, and multiplies $M_\infty$ by 
\begin{align*}
\begin{pmatrix} \rho(cy) & \\ \rho(-u) & \rho(1/(cy)) \end{pmatrix}.
\end{align*}
In summary, we have made the following changes to the variables:
\begin{align*}
m_F \leftarrow 0, \mathfrak b_1 \leftarrow \mathfrak d^{-1}, \mathfrak b_2 \leftarrow \mathfrak b_1\mathfrak b_2 \mathfrak d, M_\infty \leftarrow \begin{pmatrix} & \rho(cy) \\ \rho(1/(cy)) & \rho(-u) \end{pmatrix}M_\infty.
\end{align*}

\ignore{\color{orange} Bitsize analysis: (this is talking to myself) need to think... $\frak a_1,\frak a_2$ can be made bounded by class reduction if necessary. Also if one knows the height of a vector, its size can be adjusted by unit reduction.

Also, $y, c, \mathfrak d$ is bounded absolutely (independent of input size) assuming $\ga_1,\ga_2$ are class reduced. Also with some bizarre steps, $x$ thus $u$ can be made to be bounded absolutely. So in the very worst case the entries of $M_\sigma$ grow at most by a bounded amount --- but then in the size reduction step they get hammered again so...

(and unit reducing $cy$ and $u$ may be helpful?)

Or simply try to imitate LPSW where a similar problem is encountered and solved
}

\subsubsection*{Unit reduction}

%\vspace{-\baselineskip}
\begin{algorithm}
\caption{\texttt{unit-reduce} rank 2} 
\label{alg:ureduce}
\begin{algorithmic}[1]
\Require Parameter $A$, input $M_\infty$, basis $\mathcal U$ of a sublattice of $\Log(\cO_F^*)$
\Ensure $M_\infty$ is unit reduced with respect to $A$
\State Compute $\alpha_\infty$ of $M_\infty$
\State Find an aCVP solution $\Log(u)$ with respect to $\Log(\cO_F^*)$ and target $\Log(\alpha_\infty)-\frac{1}{d}\log(H_\infty(\alpha_\infty))\cdot(1,\ldots,1,2,\ldots,2)$, %\henry{Should the target have a $-\frac{1}{d}\log(H_\infty(\alpha_\infty))\cdot(1,\ldots,1,2,\ldots,2)$ as in the condition?} {\color{orange} yes thank you}
by applying Babai's nearest plane algorithm using $\mathcal U$
\State $\displaystyle M_\infty \leftarrow \begin{pmatrix} 1 & \\ & u^{-1}\end{pmatrix}M_\infty$
\end{algorithmic}
\end{algorithm}
%\vspace{-\baselineskip}

To unit reduce $\alpha_\infty \in \A_\infty$ such that $\alpha_\sigma \neq 0$ for all $\sigma\mid\infty$, one simply solves aCVP with respect to the target $\Log(\alpha_\infty)-\frac{1}{d}\log(H_\infty(\alpha_\infty))\cdot(1,\ldots,1,2,\ldots,2)$, the lattice spanned by $\mathcal U$, and the approximate factor $A$, using Babai's nearest plane algorithm for example. As is well-understood, the ``better'' the quality of $\mathcal U$, the lower we can set $A$ to be. Let $u \in \cO_F^*$ be any preimage of this aCVP. Then it is easy to see that $u^{-1}\alpha_\infty$ is unit reduced. In particular, for any embedding $\sigma:F\hookrightarrow\C$,
\begin{align*}
e^{-A} \leq \frac{|u^{-1}_\sigma \alpha_\sigma|}{H_\infty(x)^\frac{1}{d}} \leq e^A.
\end{align*}

\subsubsection{Scaling for bitsize control and class reduction}

%\vspace{-\baselineskip}
\begin{algorithm}
\caption{\texttt{scale} (Algorithm 3.2 of \cite{LPSW19})} 
\label{alg:scale}
\begin{algorithmic}[1]
\Require A fractional ideal $\gb \subseteq F$, a vector $\vv\in (F\otimes\R)^n$, the ``Gram-Schmidt coefficient'' $\va\in F\otimes\R$, and an HSVP solver in dimension $d$ with RHF factor $\gamma_d$
\Ensure $\gb\vv$ is scaled
\State Apply the HSVP solver to find $\mathbf s \in \va\rho(\gb)$ such that $$\|\mathbf s\| \leq \gamma_d^|\Delta_F|^{1/(2d)}\left(N(\gb)\prod_\sigma \va_\sigma\right)^{1/d}$$
\State Write $\mathbf s = \va\vx$ with $\vx \in \rho(\gb)$
\State $\gb \leftarrow \rho^{-1}(\vx^{-1})\gb, \vv \leftarrow \vx\vv$
\end{algorithmic}
\end{algorithm}
%\vspace{-\baselineskip}
\vspace{-\baselineskip}
\begin{algorithm}
\caption{\texttt{scale+} rank 2} 
\label{alg:scale+}
\begin{algorithmic}[1]
\Require Parameter $B$, inputs $M_I,M_\infty$, and an HSVP solver in dimension $d$ with RHF factor $\gamma_d$ satisfying $B \geq \gamma_d^{d^2}|\Delta_F|^{1/2}$
\Ensure $\alpha$ is class reduced, and bitsizes of the inputs are at most $\mathrm{poly}(\text{input},d,\log|\Delta_F|)$
\State \texttt{scale}$(\gb_1, \vv_1,a_{1\infty})$; update $M_I,M_\infty$ accordingly
\State \texttt{scale}$(\gb_2\gb_1^{-1}, \vv_2,a_{2\infty})$; update $M_I,M_\infty$ accordingly // $\gb_2$ changes by $\rho^{-1}(\vx^{-1})\gb_2$, $\gb_1$ remains unchanged
\State Apply \cite[Algorithm 3.3]{LPSW19} to $\gb_1\vv_1+\gb_2\vv_2$, and update $M_I,M_\infty$ accordingly
\end{algorithmic}
\end{algorithm}
%\vspace{-\baselineskip}

\cite[Section 3.2]{LPSW19} introduces mechanisms for bitsize control called \emph{scaling} and \emph{size reducing} (in the sense of \cite{LPSW19}, different from ours). For a pseudo-basis $\gb_1\vv_1+\gb_2\vv_2$, these algorithms together return another pseudo-basis of the same lattice $\gb'_1\vv'_1+\gb'_2\vv'_2$ such that 
\begin{itemize}[label=\textbullet]
\item $\displaystyle \cO_F \subseteq \gb'_i,\ 2^{-d^2}|\Delta_F|^{-\frac{1}{2}} \leq N(\gb'_i) \leq 1 $
\item $\|\rho(a_{i,\infty})\|\leq 2^d|\Delta_F|^{\frac{1}{2d}}H(a_i)^{\frac{1}{d}}$
\item $\displaystyle \vv'_i\in\cO_F,\ \|\vv'_i\| \leq \sqrt{d}(8d)^d|\Delta_F|\max_iH(a_i)^{\frac{1}{d}}$
\end{itemize}
in $\mathrm{poly}(\text{input},d,\log|\Delta_F|)$; 
see \cite[Algorithms 3.2 and 3.3]{LPSW19},\footnote{Algorithm 3.3 of \cite{LPSW19} contains an error that would alter the module under question. In Step 3 there, instead of rounding by the ring of integers, one must round by $I_iI_j^{-1}$. We discuss this detail in Appendix~\ref{app:correct} below. From the (rank 2) adelic perspective, Algorithm 3.3 of \cite{LPSW19} can be seen as multiplying $g$ on the left by a unipotent matrix whose lower left entry $m_F$ satisfies $(m_Fa_1/a_2)_\nu \in \cO_\nu$ for every $\nu \nmid \infty$. 
} also the proof of \cite[Theorem 3.8]{LPSW19}. It may help to note here that, unravelling all notations, $\rho(a_{i,\infty})$ and $H(a_i)$ above coincides with $r_{ii}$ and $\mathcal N(r_{ii}I_i)$ in \cite{LPSW19}, respectively.

These algorithms of \cite{LPSW19} use LLL algorithm in dimension $d$ as a subroutine. Using an HSVP solver of root Hermite factor $\gamma_d$, one obtains
\begin{itemize}[label=\textbullet]
\item $\displaystyle \cO_F \subseteq \gb'_i,\ \gamma_d^{-d^2}|\Delta_F|^{-\frac{1}{2}} \leq N(\gb'_i) \leq 1 $
\item $\|\rho(a_{i,\infty})\|\leq \gamma_d^d|\Delta_F|^{\frac{1}{2d}}H(a_i)^{\frac{1}{d}}$
\end{itemize}
for the first two conditions instead. For reader's convenience, we reproduced the scale algorithm of \cite{LPSW19} with this modification in Algorithm \ref{alg:scale}. 

In this section, we explain how to apply these algorithms to control the bitsizes of the variables and ensure class reducedness at the same time. This is done by the \texttt{scale+} algorithm, whose pseudocode is provided in Algorithm \ref{alg:scale+}. Observe that in step 2, \texttt{scale+} scales $\gb_2\gb_1^{-1}$ instead of $\gb_2$. Thus $\alpha$ is integral since $\gb_2^{-1}\gb_1$ is, and it satisfies $\gamma_d^{-d^2}|\Delta_F|^{-1/2} \leq N(\gb_2\gb_1^{-1}) = H_\f(\alpha)$. Hence class reduction is achieved with $B=\gamma_d^{d^2}|\Delta_F|^{1/2}$.

In addition, from the fact that $\gb_1^{-1}$ is integral, it follows that $\cO_F \subseteq \gb_2$, and $\ 2^{-2d^2}|\Delta_F|^{-1} \leq N(\gb_2) \leq 1$, and $\vv_2 \in \rho(\cO_F)$ hold at the end of \texttt{scale+}. We also have $\|\rho(a_{2,\infty})\| \leq \gamma_{d}^{2d}|\Delta_F|^{1/d}H(a_2)^{1/d}$. Hence, the second row of $g$ stays polynomially controlled just like the first, although a bit larger than the original scale algorithm of~\cite{LPSW19}.

As a side benefit, it happens that \texttt{scale+} allows to skip line 1 of Algorithm \ref{alg:adelicswap}, since both $\gb_i^{-1}$ become integral as a result.

\subsection{Summary}

\begin{algorithm}
\caption{Adelic LLL algorithm rank $2$ (pseudobasis version)} \label{alg:lll_p}
\begin{algorithmic}[1]
\Require Parameters $\delta,\mu,A,B,C$, basis $\mathcal B$ of $\cO_F$, choices of HSVP solvers for the subroutines, and a pseudobasis of the input module $\gb_1\vv_1+\gb_2\vv_2 \subseteq \rho(\cO_F^2)$
\Ensure Data $m_F,M_I,M_\infty$ representing a reduced element of $\GL(2,\A_F)$ for the input module
\State $\displaystyle m_F \leftarrow 0, M_I \leftarrow 
\begin{pmatrix} \gb_1^{-1} & \\ & \gb_2^{-1}\end{pmatrix}, M_\infty \leftarrow \begin{pmatrix} \vv_1 \\ \vv_2\end{pmatrix}$
\State \textbf{while} true \textbf{do}
\State \ \ \texttt{scale+}$(M_I,M_\infty)$
\State \ \ \texttt{unit-reduce}$(M_\infty)$ 
\State \ \ $(p,q) \leftarrow$\texttt{size-reduce}$(\rho(m_\infty))$; $\displaystyle m_F \leftarrow -p/q, M_\infty \leftarrow \begin{pmatrix} 1 & \\ -\rho(p/q) & 1 \end{pmatrix}M_\infty$ %\henry{should we add a step that extracts $m_\infty$ for self-completeness of the algorithm?} {\color{orange} yes perhaps}
\State \ \ \textbf{if}(\texttt{Lovasz-test}$(m_F,M_I,M_\infty)$) 
\State \ \ \ \ \textbf{break loop}
\State \ \ \textbf{else} 
\State \ \ \ \ \texttt{adelic-swap}$(m_F,M_I,M_\infty)$
\State \textbf{return} $m_F,M_I,M_\infty$
\end{algorithmic}
\end{algorithm}

Putting everything from the previous section together, we obtain Algorithm \ref{alg:lll_p}, the adelic LLL algorithm on pseudobases, as well as the following main result.

\begin{theorem}[Rank 2 reduction]\label{thm:main}
Algorithm \ref{alg:lll_p} returns the data $m_F,M_I,M_\infty$ representing a reduced element of $\GL(2,\A_F)$ corresponding to the input module in $\mathrm{poly}(\text{input},d,\log|\Delta_F|)$ and $\mathrm{poly}(\text{input})$ many calls to the chosen HSVP oracles.
\end{theorem}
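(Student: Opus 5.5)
The proof splits into three parts: correctness of the output, a bound on the number of iterations, and a bound on the cost and bitsize of each iteration.

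\textbf{Correctness.} First I check that every subroutine of Algorithm~\ref{alg:lll_p} preserves the double coset of $g$, hence $L(g)$ by the lemma of Section~\ref{sec:L(g)}.
\begin{itemize}
\item \texttt{scale+} rescales the pseudobasis by field elements, and \cite[Algorithm 3.3]{LPSW19} (in the corrected form of Appendix~\ref{app:correct}) is a left multiplication by a unipotent element of $\GL(2,F)$.
\item \texttt{unit-reduce} multiplies on the left by $\diag(1,u^{-1})$ with $u\in\cO_F^*$.
\item \texttt{size-reduce} multiplies on the left by a unipotent element of $\GL(2,F)$.
\item \texttt{adelic-swap} multiplies on the left by elements of $\GL(2,F)$ and on the right by elements of $\GL(2,\cO_\nu)$, as computed in that subsection.
\end{itemize}
Next I check that at the moment the loop breaks, all four conditions of Definition~\ref{def:LLL-reduced-2} hold simultaneously. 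Class reducedness with $B=\gamma_d^{d^2}|\Delta_F|^{1/2}$ holds after \texttt{scale+}. Unit reducedness holds after \texttt{unit-reduce}. Size reducedness with $C$ as in \eqref{eq:C^1/d} holds after \texttt{size-reduce}. The Lov\'asz condition is exactly what the test verifies.

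It remains to argue that the later steps do not destroy the earlier conditions. Unit and size reduction change only $u^{-1}\alpha_\infty$ and $m$. The finite part $\alpha_\f$ is untouched, since $u$ is a unit, so class reducedness survives. Size reduction leaves $\alpha$ unchanged, so unit and class reducedness survive it.

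\textbf{Number of iterations.} I would invoke Proposition~\ref{prop:swap_bound}: each pass through the loop that does not terminate performs one swap, and each swap lowers $H(v_1)$ by a factor of at least $\delta$. For the lower bound $\kappa$, I use Lemma~\ref{lemma:lowest_ht} with $\kappa=N(\gb_1+\gb_2)$ for the input. Since the input lies in $\rho(\cO_F^2)$, $\eta(g)$ is at least the norm of the sum of the input ideals, and $\eta$ is invariant under all the operations above. The initial value satisfies $\log H(v_1)\le \mathrm{poly}(\text{input})$, because $H(v_1)=N(\gb_1)H_\infty(\vv_1)$ is bounded by $N(\gb_1)\|\vv_1\|^d$ up to $d^{d}$ factors. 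Likewise $-\log N(\gb_1+\gb_2)$ is polynomial in the bitsize of the ideals. So the number of iterations is $\mathrm{poly}(\text{input})/(-\log\delta)$, which is polynomial for fixed $\delta$.

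\textbf{Cost per iteration.}
\begin{itemize}
\item Each iteration makes one oracle call in \texttt{size-reduce} (dimension $d+d_E$) and a constant number in \texttt{scale+} (dimension $d$).
\item \texttt{unit-reduce} runs Babai's algorithm on $\mathcal U$.
\item \texttt{adelic-swap} uses ideal arithmetic and the computation of $u,v$ via \cite[Theorem 1.2]{Coh96}.
\end{itemize}
All of these are polynomial in the current bitsizes.

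\textbf{Main obstacle: bitsize control.} I must show that the bitsizes stay $\mathrm{poly}(\text{input},d,\log|\Delta_F|)$ across all iterations, not just after \texttt{scale+}. My approach is as follows.
\begin{itemize}
\item At the start of each iteration, \texttt{scale+} resets the data to satisfy the bounds listed in the scaling subsection (ideals sandwiched between $\cO_F$ and norm $\gamma_d^{-O(d^2)}|\Delta_F|^{-1}$, vectors of length bounded by $\mathrm{poly}\cdot\max_i H(a_i)^{1/d}$). These quantities are polynomially bounded because $H(a_1)$ only decreases and $H(a_1)H(a_2)=H(\det g)$ is invariant.
\item Within the iteration, \texttt{unit-reduce} multiplies by a unit whose logarithmic embedding lies within $O(A+\log H)$ of the target, which costs polynomially many bits.
\item \texttt{size-reduce} produces $p,q$ with $|q_\sigma|\le C^{1/d}$ and $|q_\sigma m_\sigma-p_\sigma|\le\mu$, so $p,q$ are polynomially bounded.
\item In \texttt{adelic-swap}, the elements $c$, $\gd$, $u$ are obtained from polynomially sized data, and $cy$, $u$ are bounded accordingly.
\end{itemize}
I expect this bookkeeping to be the hardest part, particularly bounding $u$ in the swap. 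If a direct bound on $u$ is not available, I would reduce $u$ modulo $cy\,(\gb_1\gd)^{-1}$-type lattices before updating $M_\infty$. This changes $g$ only by a unipotent element of $\GL(2,F)$, and the next \texttt{scale+} absorbs the change.
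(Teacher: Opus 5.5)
Your proposal is correct and follows the paper's proof: the loop count comes from Proposition~\ref{prop:swap_bound}, each loop makes two oracle calls in \texttt{scale+} and one in \texttt{size-reduce}, and bitsizes stay controlled because \texttt{scale+} resets them to $\mathrm{poly}(\log H(a_i),d,\log|\Delta_F|)$ while $H(a_1)$ only decreases and $H(a_1)H(a_2)=H(\det g)$ stays fixed. Your explicit correctness check (the double coset is preserved and all four conditions of Definition~\ref{def:LLL-reduced-2} hold at exit) and your direct bound on $p,q$ from $|q_\sigma|\le C^{1/d}$ go somewhat beyond the paper's terse argument; you should also state explicitly that $H(a_1)=H(v_1)\ge\eta(g)\ge\kappa$ (the paper uses $H(a_1)\ge 1$), since this is what bounds $H(a_2)$ from above.
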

\begin{proof}
Proposition \ref{prop:swap_bound} shows that the number of loops will be $\mathrm{poly}(\text{input})$. In each loop, \texttt{scale+} makes two calls to the dimension $d$ HSVP oracle, and \texttt{size-reduce} makes one call to the dimension $d+d_E$ HSVP oracle. Thus all that remains to show is that the bitsize of all variables are controlled throughout each loop.

At the start of each loop, \texttt{scale+} imposes an upper bound on the bitsizes of the inputs $M_I$ and $M_\infty$ by $\mathrm{poly}(\log H(a_i),d,\log|\Delta_F|)$. Hence it suffices to show 
\begin{enumerate}[label=(\roman*)]
\item \texttt{unit-reduce}, \texttt{size-reduce}, and \texttt{adelic-swap} change the bitsizes at most by $\mathrm{poly}(\text{input},d,\log|\Delta_F|)$
\item $\max_iH(a_i)$ stays bounded throughout the algorithm.
\end{enumerate}

If we use LLL for \texttt{size-reduce}, then (i) follows because everything runs in polynomial time. If we use a better algorithm than LLL, (i) still follows because its output will be smaller than what LLL gives. As for (ii), recall that $H(a_1)$ keeps decreasing throughout the algorithm, and that $H(a_2)$ is determined by $H(a_1)H(a_2) = H(\det g)$. Since $H(a_1) \geq 1$ (recall the input is a subset of $\rho(\cO_F^2)$ by assumption), $H(a_2) \leq H(\det g)$ always.
\qed
\end{proof}

The reason the output of Algorithm \ref{alg:lll_p} is not in pseudobasis form is that pseudobases are usually not size reduced. If desired, one can undo the last \texttt{size-reduce} step to output a pseudobasis such that $\gb_1\vv_1$ satisfies the height bound of Proposition \ref{prop:rhf_bound}. See the appendix for a comparison with the rank 2 reduction of \cite{LPSW19}.

\ignore{
\subsubsection*{Unit and class reduction}
Let $\alpha$ be an adele such that $\alpha_\nu = (a_2/a_1)_\nu$ for $\nu\nmid\infty$, and $\alpha_\sigma = (a_2/a_1)_\sigma$ for $\sigma\mid\infty$.
The goal is to compute $f=f_1f_2 \in F$ such that
(i) that $f_1\alpha$ is integral and $B \leq H_\f(f_1\alpha) \leq 1$ for some constant $B$ again depending only on $F$ 
(ii) $f_2 \in \cO_F^*$, and the entries of $$\Log(f\alpha) - \frac{1}{d}\log H_\infty(f\alpha)\cdot(\underbrace{1,\ldots,1}_\mathrm{real},\underbrace{2,\ldots,2}_\mathrm{complex})$$ are bounded by some constant $A$ depending only on $F$.
{\color{orange} Changmin writes how (i) can be ensured, in how much time complexity, and how $B$ is chosen.}
Ensuring (ii) is equivalent to solving approx-CVP with respect to the unit lattice of $F$ and the target point
$$\Log(f_1\alpha) - \frac{1}{d}\log H_\infty(f_1\alpha)\cdot(\underbrace{1,\ldots,1}_\mathrm{real},\underbrace{2,\ldots,2}_\mathrm{complex});$$
the solution would be $\Log(f_2)$. $A$ is then determined by the strength of the approx-CVP used. One could use Babai's nearest plane algorithm, which takes polynomial time in $d$; $A$ then would be half the $L^\infty$-diameter of the fundamental domain of $\mathcal U$.

Once this $f$ is found, we multiply 
\begin{align*}
\begin{pmatrix} f^{-1} & \\  & 1 \end{pmatrix}
\end{align*}
to the left of $g$. This does not affect the Lov\'asz condition, since
\begin{align*}
\begin{pmatrix} f^{-1} & \\  & 1 \end{pmatrix}\begin{pmatrix}1 & \\ m & 1\end{pmatrix}\begin{pmatrix}a_1 & \\ & a_2\end{pmatrix} = \begin{pmatrix}1 & \\ fm & 1\end{pmatrix}\begin{pmatrix}f^{-1}a_1 & \\ & a_2\end{pmatrix}
\end{align*}
and
\begin{align*}
H((m,a_2/a_1)) = H((fm,fa_2/a_1)).
\end{align*}
This corresponds to the changes
\begin{align*}
\mathfrak b_1 \leftarrow f\mathfrak b_1, M_\sigma \leftarrow \begin{pmatrix}\sigma(f^{-1}) & \\ & 1\end{pmatrix}M_\sigma.
\end{align*}
}

\ignore{
\begin{enumerate}[1.]
\item \emph{Size reduction.} 
{\color{red} todo: This part can probably be improved}

The goal is to ensure that $m$ satisfies (i) above.

We do a little preprocessing to make $m$ integral: find $r_0 \in F$ such that $(m-r_0)_\nu \in \cO_\nu$ for all finite $\nu$, then replace $m$ with $m-r_0$. 
{\color{orange} This I might put in the appendix, but here's one way to do this --- there may be something more efficient which I'll look for later. Let $I = (\tau)\subseteq \cO_F$ be any integral principal ideal such that
\begin{align*}
\prod_{\nu \nmid \infty} \nu^{\min{(\ord_\nu m,0)}} \subseteq I^{-1}.
\end{align*}
We want to find the element $r_0$ from $I^{-1}$. 
Each $\nu$ for which $\ord_\nu m < 0$ gives a condition
\begin{align*}
r_{0,\nu} = m_\nu
\end{align*}
in $I_\nu^{-1}/\cO_\nu$, or equivalently,
\begin{align*}
\tau_\nu r_{0,\nu} = \tau_\nu m_\nu
\end{align*}
in $\cO_\nu/I_\nu$.
Recalling the Chinese remainder theorem
\begin{align*}
\cO_F/I \cong \prod_{\nu} \cO_\nu/I_\nu,
\end{align*}
we find $r_0' \in \cO_F$ satisfying the above congruences mod $I$ then take $r_0 = \tau^{-1}r_0'$ --- \emph{cf.} Cohen's Advanced ... book, Ch. 1.3.1.

}

\vspace{4mm}

Now assume $m$ is integral, i.e., $m_\nu \in \cO_\nu$ for all $\nu \nmid \infty$.
Fix a (large) integer $\ell \geq 1$ and a fundamental domain $\mathcal F$ of $\rho(\cO_F) \subseteq F\otimes\R$ satisfying $\ell^{-1}\max_{i,x\in\mathcal F}|\sigma_i(x)| \leq \mu$. The goal is to find $r \in F$ such that $H_\f(m-r)^\frac{1}{d} \cdot \max_i |\sigma_i(m-r)| \leq \mu$.

This is essentially doing a Diophantine approximation, which (classical) LLL is very good at. We first give the following inefficient method, just to show that it's possible.

For each integer $1 \leq a \leq \ell^d+1$, choose $b=b(a) \in \cO_F$ such that $\rho(am-b) \in \mathcal F$. By the pigeonhole principle, there exists $a > a'$ and $b,b'$ such that $\rho(am-b)-\rho(a'm-b') \in \ell^{-1}\mathcal F$, so that
\begin{align*}
\rho\left(m - \frac{b-b'}{a-a'}\right) \in \frac{1}{\ell(a-a')}\mathcal F.
\end{align*}
We set $p = b-b', q = a-a', r = p/q$. Now multiply $g$ from the left by
\begin{align*}
\begin{pmatrix} 1 & \\ -r & 1 \end{pmatrix},
\end{align*}
and update the GSO coefficients accordingly. Note that the updated $m$ now satisfies $H_\f((m,1))^\frac{1}{d}\max_i|\sigma_i(m)| \leq \mu$: since the new $\rho(m)$ lies in $(a-a')^{-1}\ell^{-1}\cal F$, it satisfies
\begin{align*}
(a-a')\max_i|\sigma_i(m)| \leq \mu,
\end{align*}
and $H_\f((m,1))\leq H_\f(a-a')^{-1} = N(a-a') = (a-a')^d$. So if $C = \ell^{d^2}$ then this algorithm works correctly.
\newline

{\color{orange} Here's the poly time method with LLL: Take a $\Z$-basis of $\cO_F$ and let $(m_1,\ldots,m_d)$ be the coordinates of $m$ with respect to that basis. By Prop 1.39 of the original LLL paper, one can find, for any $0 < \varepsilon < 1$, integers $p_1,\ldots,p_d,q$ such that
\begin{align*}
|p_i-qm_i| \leq \varepsilon, 1 \leq q \leq (4/3)^{d(d+1)/4}\varepsilon^{-d}.
\end{align*}
(Here one might want $q$ to be a power of $2$ for efficiency reasons --- this seems attainable with only a little bit of sacrifice on output quality)

Translating this back to the coordinate system on $F \otimes \R$, we obtain $r \in q^{-1}\cO_F$ such that
\begin{align*}
\max_{i}|\sigma_i(m-r)| \leq q^{-1}\varepsilon\max_{i,x\in\cal F}|\sigma_i(x)|.
\end{align*}
(possibly up to some constant depending on the shape of $\mathcal F$.) 

Update $m$ to $m-r$.
Now $H_\f((m,1)) \leq N(q) = q^d$, so this implies
\begin{align*}
H_\f((m,1))^\frac{1}{d}\max_{i}|\sigma_i(m-r)| \leq \varepsilon\max_{i,x\in\cal F}|\sigma_i(x)|.
\end{align*}
We can take $\varepsilon = \ell^{-1}$, and $C = (4/3)^{d^2(d+1)}\ell^{d^2}$.
}\newline

\item \emph{Lov\'asz step.}
Fix $0 < \delta < 1$. If
\begin{align*}
\delta H((a_1,0)) \leq H((m a_1,a_2)),
\end{align*}
or equivalently
\begin{align*}
\delta \leq H((m,\alpha)),
\end{align*}
then move to Step 3. Otherwise multiply $g$ from the left by
\begin{align*}
\begin{pmatrix}  & 1 \\ 1 &  \end{pmatrix},
\end{align*}
update the GSO, and repeat Step 1. \newline

\item \emph{Unit and class reduction.} Compute $f \in F$ such that (i) the entries of $$\Log(f\alpha) - \frac{1}{d}\log H_\infty(f\alpha)\cdot(\underbrace{1,\ldots,1}_\mathrm{real},\underbrace{2,\ldots,2}_\mathrm{complex})$$ are bounded by some constant $A$ depending only on $F$ {\color{orange} basically the diameter of the fundamental domain of the unit lattice} (ii) that $\alpha$ is integral and $B \leq H_\f(f\alpha) \leq 1$ for some constant $B$ again depending only on $F$ {\color{orange} basically the reciprocal of maximum norm among the (chosen) representatives of the ideal class group}. (i) is solving CVP with respect to the unit lattice {\color{red} approx-CVP is fine, at the expense of a worse value of $A$}, and (ii) comes down to computing the ideal class of $\alpha$. {\color{red} this not sure what the complexity is. perhaps Changmin knows. One might want to do similar thing when working with pseudobases.} Then multiply our matrix $g$ from the left by
\begin{align*}
\begin{pmatrix} f^{-1} & \\  & 1 \end{pmatrix},
\end{align*}
and terminate the algorithm.
{\color{blue} 
One can find 
an ideal class of $\alpha$ in $L_\Delta(2/3)= 2^{c\cdot (\log |\Delta|)^{2/3} \cdot (\log\log |\Delta|)^{1/3}}$ time complexity.
In other words, one can compute a prime ideals $\mathcal P_i$, $e_i\in \Z$
such that  
\begin{align*}
    (\alpha) \cdot \prod_{i} \mathcal P_i^{e_i} = (\beta).
\end{align*}
}

{\color{red} One can check that this step actually preserves the Lov\'asz condition established in the previous step. I'll have to state that as a lemma later.}

\end{enumerate}

As Changmin remarked, it may be interesting to compare with the bound in LSPW. Curiously their bound involves the discriminant, whereas mine does not, at least not in a direct way, though one may make the case that the constant $C$ essentially encodes that information.

\subsection*{Practical considerations}
{\color{orange} need to scrutinize our algorithm for the following considerations, especially the second point needs some writing down.}

\begin{itemize}
\item There may be trade-offs between the strength of the size reduction and the output complexity that one can exploit. 

One idea is to compute $m$ to be just small enough to know that we must swap the rows. We may be able to set the target size of $m$ in advance with some computations.

\item If $g$ is given as a pseudobasis --- or a basis, if the associated module is free --- some of these steps may simplify, or may be written in the classical language, so it would operate on the level of pseudobases and not adeles directly.

Especially, if input was a matrix in $\cO_F$, we can try to keep the entries in $\cO_F$ to bypass some computations, such as the first step of size reduction --- indeed this seems to be possible.

\item For certain number fields (especially those of cryptographic interest) their geometry may be particularly simple, leading to speedups in some parts.

\item When unit reducing, is it better to even out the sizes of the entries, or is it better to make it as uneven as possible?
\end{itemize}

\section{Pseudocode in the classical language}

I feel it'll be good to obtain POC quickly, even if under the simplest circumstances possible. Thus in what follows, assume that $F$ is a power-of-$2$ cyclotomic field, and that the input module lattice is a free module. {\color{orange} Actually in retrospect I don't think either assumptions are necessary}

Suppose we already know a $\Z$-basis $\mathcal B = \{v_1,\ldots,v_d\}$ of $\cO_F$, and write $\vv_i = \rho(v_i) \in F \otimes \R$. Let $\mathcal F \subseteq F \otimes \R$ be the fundamental parallelepiped of $\cO_F$ corresponding to this basis that is centered at the origin. Choose $\ell$ satisfying $\ell^{-1}\max_{i,x\in\mathcal F}|\sigma_i(x)| \leq \mu$. {\color{purple} Henry: How do we choose such an $\ell$? I don't think the value of $\max_{i,x\in\mathcal F}|\sigma_i(x)|$ is easy to compute in general. Is this identity easy to check?} {\color{red} It would come from the knowledge of the $\Z$-basis of $\cO_F$. Also it doesn't exactly has to be that value, just needs to be large enough. For 2-power cyclotomic fields, we already know a good choice of $\mathcal B$, so we should be good, right? In this case, some optimization may also be possible, because $\mathcal F$ is a rotated cube, but let's worry about that later.}

Say our module is given by $\ga_1 \vb_1 + \ga_2\vb_2$, where $\ga_1,\ga_2$ are fractional ideals of $F$ and $\vb_1,\vb_2 \in F^2$ are linearly independent over $F$. Following Cohen [REF:Coh96] we assume ideals are given as an (HNF) matrix with respect to $\mathcal B$.

Also set the parameters $\delta,\mu, A,B,C$ appropriately {\color{orange}(more on this later)}. {\color{orange} so far $\delta, \mu$ are any positive reals satisfying $\delta < 1$ and $\delta^\frac{2}{d}-\mu^2 > 0$, $C = (4/3)^{d^2(d+1)}\ell^{d^2}$}

Set
\begin{align*}
M_F = \begin{pmatrix} 1 & 0 \\ m_F & 1 \end{pmatrix} \in \GL(2,F)
\end{align*}
where $m_F=0$ initially, and
\begin{align*}
M_I = \begin{pmatrix} \ga_1^{-1} & \\ & \ga_2^{-1} \end{pmatrix},
\end{align*}
{\color{orange} I abused the notation and wrote for now $M_I$ as ideal-valued matrix, but they really should be understood as elements of the finite idele; also $M_FM_I$ gives the ``finite part'' of the basis under question

\vspace{4mm}

If you have an ideal
$$ \gp_1^{\alpha_1}\gp_2^{\alpha_2}\ldots\gp_k^{\alpha_k}$$
you should read it as an adele/idele whose $\gp_i$-th entry is $\pi_{\gp_i}^{\alpha_i}$, and other entries are $1$

}
and for each embedding $\sigma:F \rightarrow \C$,
\begin{align*}
M_\sigma = \begin{pmatrix} \sigma(\vb_1) \\ \sigma(\vb_2) \end{pmatrix} = \begin{pmatrix} 1 & \\ m_\sigma & 1 \end{pmatrix}\begin{pmatrix} a_{1\sigma} & \\ & a_{2\sigma}\end{pmatrix}U_\sigma
\end{align*}
where $U_\sigma$ is some orthogonal/unitary matrix.
We will continue updating these matrices until we are done.

Below, by $m$, we refer to the adele such that $m_\nu = (m_F)_\nu$ for finite $\nu$, and $m_\sigma$ is the $m_\sigma$ of $M_\sigma$ for each infinite $\sigma$. Similarly, $a_i$ is the adele such that $(a_i)_\nu = \pi_\nu^{-\ord_\nu\ga_i}$ for each finite $\nu$, and $(a_i)_\sigma = a_{i\sigma}$).

\subsection*{Size reduction}

The goal of this routine is to attain the size reducedness
\begin{align*}
H_\f((m,1))^\frac{1}{d}\max_i|\sigma_i(m)| \leq \mu\mbox{ and } H_\f((m,1)) \leq C.
\end{align*}

For each $v_i \in \mathcal B$, let $\vv_i = \rho(v_i) \in \A_\infty$. 
%{\color{orange} Recall that, since $F$ is a power-of-2 cyclotomic field by assumption, we have a particularly nice choice of $v_1,\ldots,v_d$.}
Write
\begin{align*}
\vm_\infty := (m_{\sigma})_{\sigma \mid \infty} = m_1\vv_1+\ldots+m_d\vv_d.
\end{align*}
As explained in the previous section, we can use LLL (use a stronger reduction if desired) to find integers $p_1,\ldots,p_d,q$ such that
\begin{align*}
|p_i-qm_i| \leq \varepsilon, 1 \leq q \leq (4/3)^{d(d+1)/4}\varepsilon^{-d}
\end{align*}
with $\varepsilon = \ell^{-1}$. Taking
\begin{align*}
r = \frac{1}{q}(p_1v_1+\ldots+p_dv_d) \in q^{-1}\cO_F,
\end{align*}
this implies
\begin{align*}
H_\f((m-r,1))^\frac{1}{d}\max_{i}|\sigma_i(m-r)| \leq \varepsilon\max_{i,x\in\cal F}|\sigma_i(x)| \leq \mu, \mbox{ and } H_\f((m-r,1)) \leq C,
\end{align*}
again as explained previously. Update $m \leftarrow m-r$, i.e., $m_F \leftarrow -r$ and $m_\sigma \leftarrow m_\sigma - \sigma(r)$.

\subsection*{Lov\'asz test}

If now the matrix satisfies the Lov\'asz condition
\begin{align*}
\delta H((a_1,0)) \leq H((m a_1,a_2)),
\end{align*}
or equivalently, in more practical terms,
\begin{align*}
\delta N(\ga_1)\prod_{\sigma:F\rightarrow \C} |a_{1\sigma}| \leq N((m_F\ga_1^{-1}+\ga_2^{-1}))^{-1}\prod_{\sigma:F\rightarrow \C}|(m_\sigma a_{1\sigma},a_{2\sigma})|
\end{align*}
{\color{orange} maybe better to do

\begin{align*}
\delta N((m_F\cO_F+\ga_1\ga_2^{-1}))\prod_{\sigma:F\rightarrow \C} |a_{1\sigma}| \leq \prod_{\sigma:F\rightarrow \C}|(m_\sigma a_{1\sigma},a_{2\sigma})|
\end{align*}

}
(here beware that $\sigma$ is summed over all \emph{embeddings}, not \emph{places}) 
then we move onto the next step of the algorithm and then terminate. If not, we carry out the following preprocessing and return to the size reduction step.

Let us start by updating the finite/arithmetic parts --- $M_F$ and $M_I$ --- first.
Start by writing $m_F = \alpha/\beta$ for $\alpha,\beta \in \cO_F$; note that this is already accomplished, because $\beta = q$. Compute nonzero $c \in \cO_F$ such that $c\alpha\ga_1^{-1}, c\beta\ga_2^{-1}$ are both integral. Suppose $c\alpha\ga_1^{-1} + c\beta\ga_2^{-1} = \gd$. Then by Theorem 1.2 of [Coh96], we can efficiently compute $u \in (\ga_1\gd)^{-1}, v \in (\ga_2\gd)^{-1}$ such that
\begin{align*}
c\alpha u + c\beta v = 1.
\end{align*}
Now set
\begin{align*}
M_F &= \begin{pmatrix} \alpha/\beta & 1 \\ 1 & \end{pmatrix}\begin{pmatrix} u & c\beta \\ v & -c\alpha\end{pmatrix} = \begin{pmatrix} 1/(c\beta) & \\ u & c\beta \end{pmatrix}, \\ 
M_I &= \begin{pmatrix} \gd & \\ & (\ga_1\ga_2\gd)^{-1} \end{pmatrix}.
\end{align*}
{\color{orange} \emph{Why does this work?} Interpret
\begin{align*}
M_FM_I = \begin{pmatrix}\alpha/\beta & 1 \\ 1 & \end{pmatrix}\begin{pmatrix}\ga_1^{-1} & \\ & \ga_2^{-1} \end{pmatrix}
\end{align*}
(after swapping the two rows)
as an element of $\GL(2,\A_\f)$, where $\A_\f$ is the set of finite adeles. For each $\nu \nmid \infty$, we are allowed to adjust $M_FM_I$ from the right by an element of $\GL(2,\cO_\nu)$, since such maneuver does not alter the underlying lattice. So we need to confirm that the natural projection of
\begin{align*}
\begin{pmatrix}u\ga_1 & c\beta \ga_1 \\ v\ga_2 & -c\alpha \ga_2 \end{pmatrix} \begin{pmatrix}\gd & \\ & (\ga_1\ga_2\gd)^{-1}\end{pmatrix} = \begin{pmatrix} u\ga_1\gd & c\beta(\ga_2\gd)^{-1} \\ v\ga_2\gd & -c\alpha(\ga_1\gd)^{-1}\end{pmatrix}
\end{align*}
to $\nu$-adics yields an element of $\GL(2,\cO_\nu)$. The determinant is $1$ by construction, so we just need to see that the right-hand side is integral --- which again follows by construction.
}

As for the infinite/analytic parts --- $M_\sigma$ for $\sigma \mid \infty$ --- simply swap the rows of the matrix.

Next, multiply $M_F$ from the left by
\begin{align*}
\begin{pmatrix} c\beta & \\ -u & 1/(c\beta) \end{pmatrix},
\end{align*}
and each $M_\sigma$ by
\begin{align*}
\begin{pmatrix} \sigma(c\beta) & \\ \sigma(-u) & \sigma(1/(c\beta)) \end{pmatrix}.
\end{align*}
{\color{orange} \emph{Why is this okay?} Recall left multiplication by $\GL(2,F)$ preserves the lattice in the adelic setting.

Also Henry suggested it's also possible to update $M_\sigma$'s globally if desired.
}

This turns $M_F$ back to the identity, as in the initial setting. Update the GSO of $M_\sigma$'s, and go back to the size reduction step.

\subsection*{Unit and class reduction} 

These are already understood procedures, and Changmin probably understands them better than I do.
{\color{orange} Not having carefully thought about the implementation perspective before, I thought these need to be done only once. But to keep the bitsizes of the matrices down during the course of the algorithm, it may have to be recalled more frequently.}

{\color{orange} Remark: Notice the output is not necessarily in pseudobasis form. (In fact, input doesn't have to be either.) This is because pseudobasis is in some sense limited in its expressive power: e.g., it's not very compatible with the notion of being size reduced. If so desired, one can ``un-size reduce'' at the end and output a pseudobasis.}
}

\section{Adelic LLL in rank $n$}

\subsection{Conceptual version} 
\begin{definition}[Rank n reduced basis]\label{def:LLL-reduced-n}
Fix parameters $0 < \mu,\delta,A,B,C$, as in the $n=2$ case.
We say $g \in \GL(n,\A_F)$ is \emph{(LLL-)reduced} with respect to these parameters if it has an Iwasawa decomposition $g=uak$ such that
\begin{enumerate}[label=(\roman*)]
\item (size reduced) For every $m=m_{kj}$, there exists $\tau_\sigma > 0$ for each of the $d$ embeddings $\sigma:F\hookrightarrow\C$ such that
\[\tau_\sigma \leq C^\frac{1}{d}, \text{ } \tau_\sigma|m_\sigma| \leq \mu \text{ and }
H_\f(m,1) \leq\prod_{\sigma:F\hookrightarrow\C}\tau_\sigma.\]
\item (Lov\'asz condition) $\delta \leq H((m_{k+1,k},\alpha_k))$ for all $1 \leq k < n$.
\item (unit reduced) The entries of $$\Log(\alpha_k) - \frac{1}{d}\log H_\infty(\alpha_k)\cdot(\underbrace{1,\ldots,1}_\mathrm{real},\underbrace{2,\ldots,2}_\mathrm{complex})$$ are bounded by $A$ for all $1 \leq k < n$.

\item (class reduced) $\alpha_k$ is integral and $B^{-1} \leq H_\f(\alpha_k)$ for all $1 \leq k < n$. % same change as in rank 2 case
\end{enumerate}
\end{definition}

Combining the ideas from the classical LLL \cite{LLL82} and the rank $2$ case, every aspect of the algorithm generalizes in a straightforward way. For example, see Algorithm \ref{alg:lll_c_n}, and the following generalizations of Propositions \ref{prop:swap_bound} and \ref{prop:rhf_bound}; the proofs are similar, and are delegated to Appendix \ref{app:proof}.

\begin{algorithm}
\caption{Adelic LLL algorithm (conceptual version)} \label{alg:lll_c_n}
\begin{algorithmic}[1]
\Require Parameters $\delta,\mu,
A,B,C$ and $g=uak \in \GL(n,\A_F)$.
\Ensure LLL-reduced $g \in \GL(n,\A_F)$.
\State $k \leftarrow 1$
\State \textbf{while} $k<n$ \textbf{do}
\State \ \ \texttt{class-reduce} $\alpha_k$
\State \ \ \texttt{unit-reduce} $\alpha_k$
\State \ \ \texttt{size-reduce} $(k+1)$-st row of $g$ %in decreasing order  
\State \ \ \textbf{if}(\texttt{Lovasz}$(k,g)$)
\State \ \ \ \ $k \leftarrow k+1$
\State \ \ \textbf{else} 
\State \ \ \ \ swap rows $k,k+1$ of $g$
\State \ \ \ \ $k \leftarrow \max(1,k-1)$
\State \textbf{return} $g$
\end{algorithmic}
\end{algorithm}

\begin{proposition}[Rank n termination]
\label{prop:swap_bound_n}
Define $E(g) = \displaystyle\prod_{h=1}^{n-1} H(v_1\wedge\cdots\wedge v_h)$, where $v_k$ is the $k$-th row vector of $g$. Suppose 
\begin{align*}
\kappa = \min\{E(g'): g' \in \GL(n,F)g\prod_{\nu \nmid \infty} \GL(n,\cO_\nu)\} > 0.
\end{align*}
Then Algorithm \ref{alg:lll_c_n} takes at most $(\log {E(g)} - \log{\kappa})/(-\log \delta)$ swaps to terminate.
\end{proposition}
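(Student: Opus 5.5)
We follow the classical LLL potential argument, with $E(g)$ playing the role of the product of the Gram--Schmidt volumes. The plan has three steps. First, $E$ is unchanged by every non-swap operation. Second, a swap multiplies $E$ by a factor less than $\delta$. Third, $E$ stays at least $\kappa$ throughout. The swap bound then follows exactly as in the proof of Proposition~\ref{prop:swap_bound}.

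\emph{Step 1: invariance under class, unit and size reduction.} Each of these operations multiplies $g$ on the left by an element of $\GL(n,F)$ that is lower triangular. Class and unit reduction use diagonal matrices, and size reduction uses unipotent matrices. Such a matrix $t$ sends $v_1\wedge\cdots\wedge v_h$ to $(t_{11}\cdots t_{hh})\, v_1\wedge\cdots\wedge v_h$, and $t_{11}\cdots t_{hh}\in F^*$. The height is invariant under scaling by $F^*$, by the product formula, so every factor $H(v_1\wedge\cdots\wedge v_h)$ is unchanged. We also use that the new $g$ lies in the same double coset, so $E(g)\ge\kappa$ continues to hold at every stage.

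\emph{Step 2: effect of a swap.} First we check, using the Iwasawa decomposition $g=uak$, that
\[
H(v_1\wedge\cdots\wedge v_h)=H(a_1)\cdots H(a_h).
\]
Right multiplication by $K_\nu$ preserves local heights of wedges: it acts on $\bigwedge^h$ through $\bigwedge^h k_\nu$, which is again in $\GL(\cO_\nu)$ or unitary. The unipotent $u$ makes $v_1\wedge\cdots\wedge v_h$ equal to $(a_1e_1)\wedge\cdots\wedge(a_he_h)$ modulo the triangular structure. We will check this carefully as the one technical point, with the wedge height taken coordinatewise in the basis $e_{i_1}\wedge\cdots\wedge e_{i_h}$.

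Swapping rows $k$ and $k+1$ leaves every wedge with $h\ne k$ unchanged, up to sign. For $h=k$, the factor $H(a_1)\cdots H(a_{k-1})H(a_k)$ is replaced by $H(a_1)\cdots H(a_{k-1})\,H(a_k')$, where $a_k'$ is the new $k$-th diagonal entry. The new $k$-th vector, projected away from $v_1,\dots,v_{k-1}$, is $(m_{k+1,k}a_k,\ a_{k+1})$ in the $(k,k+1)$ block. Hence $H(a_k')=H((m_{k+1,k}a_k,a_{k+1}))=H(a_k)\,H((m_{k+1,k},\alpha_k))$, and since the swap happens only when the Lov\'asz condition fails, this is less than $\delta H(a_k)$. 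To make the projection claim precise, we apply the identity $H(v_1\wedge\cdots\wedge v_h)=\prod_{i\le h}H(a_i)$ to the new $g$. The new wedge $v_1\wedge\cdots\wedge v_{k-1}\wedge v_{k+1}$ can be computed directly from $uak$ by restricting to the rows involved, just as in the rank 2 proof of Proposition~\ref{prop:swap_bound}, because the lower-triangular rows $1,\dots,k-1$ span the first $k-1$ coordinates after applying $a$. So $E$ drops by a factor less than $\delta$ at each swap.

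\emph{Step 3: conclusion.} Since $E$ never increases, drops by a factor of at least $\delta$ per swap, and remains at least $\kappa$ by Step 1, the number of swaps $s$ satisfies $\delta^sE(g)\ge\kappa$. This gives $s\le(\log E(g)-\log\kappa)/(-\log\delta)$.

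The main obstacle is Step 2, specifically justifying the wedge-height identity at finite places with the non-unique $p$-adic Iwasawa decomposition. The first thing to try is to reduce to the observation that $\bigwedge^h(uak)=\bigwedge^h u\,\bigwedge^h a\,\bigwedge^h k$, where $\bigwedge^h u$ is unipotent triangular in a suitable ordering. Taking the relevant coordinate $e_1\wedge\cdots\wedge e_{h-1}\wedge e_h$ as a lower bound is then analogous to Lemma~\ref{lemma:diagonal}.
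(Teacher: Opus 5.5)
Your proposal is correct and follows the paper's own argument in Appendix~A: the identity $H(v_1\wedge\cdots\wedge v_h)=H(a_1)\cdots H(a_h)$; the fact that swapping rows $k,k+1$ changes only the $h=k$ factor, into $H(a_1)\cdots H(a_{k-1})H((m_{k+1,k}a_k,a_{k+1}))<\delta H(a_1)\cdots H(a_k)$ by failure of the Lov\'asz condition; and the floor $\kappa$. Your Step~1 spells out the invariance under the lower-triangular operations, which the paper leaves implicit.

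The step you flag as the main obstacle is routine. The computation via $\bigwedge^h u\,\bigwedge^h a\,\bigwedge^h k$ gives both the old and the swapped wedge heights exactly, for any choice of Iwasawa decomposition, because $\bigwedge^h k_\nu$ preserves local heights. So the non-uniqueness of the $p$-adic decomposition is harmless. A Lemma~\ref{lemma:diagonal}-type coordinate lower bound is neither needed nor sufficient here, since what you need is an upper bound on the new wedge.
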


It follows from well-known facts in the geometry of numbers that $\kappa > 0$ for every $g \in \GL(n,\A_F)$ --- see e.g., \cite[p.18]{BV83} and  \cite[Theorem 3.1.3]{MW01}. An explicit bound such as Lemma \ref{lemma:lowest_ht} can also be obtained.

%For $g \in \GL(n,\A_F)$ corresponding to $\gb_1\vv_1+\cdots+\gb_n\vv_n$ with integral $\vv_i$, Lemma \ref{lemma:lowest_ht} can be straightforwardly generalized to show that, $H(v)$ where $v \in F^ng\backslash\{0\}$ can be lower bounded by $N\left(\sum\gb_i\right)$. Similarly one can lower bound $H(v_1\wedge\cdots\wedge v_h)$, for any $h=1,\ldots,n$, since $v_1\wedge\cdots\wedge v_h$ is a vector of a module lattice of rank $\binom{n}{h}$ with an integral pseudobasis; alternatively one can apply \cite[Theorem 3.1.3]{MW01}.
\begin{proposition}[Rank n quality] \label{prop:rhf_bound_n}
Let $Q$ be as in Proposition \ref{prop:rhf_bound}, and let $g\in\GL(n,\A_F)$ be reduced, and let $v_1$ denote its first row vector. Then
\begin{align*}
H(v_1) & \leq Q^{n-1}H(\det g)^\frac{1}{n}. \\
H(v_1) & \leq Q^{2(n-1)} \eta(g) %\min_{v \in F^ng\backslash\{0\}}H(v).
\end{align*}
\end{proposition}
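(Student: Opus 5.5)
The plan is to copy the classical LLL argument. First I will reduce everything to the rank 2 estimate \eqref{ineq:ratio}, applied to each pair of consecutive Iwasawa diagonal entries. Then I will chain these estimates.

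\emph{Step 1: the local ratio bound.} Write $g=uak$ with $a=\diag(a_1,\ldots,a_n)$ and $\alpha_k=a_{k+1}/a_k$. I claim that for every $1\le k<n$,
\[ H(\alpha_k)\ge Q^{-2}, \quad\text{equivalently}\quad H(a_k)\le Q^2H(a_{k+1}). \]
The reduced conditions (ii)--(iv) of Definition \ref{def:LLL-reduced-n}, restricted to index $k$, involve only $m_{k+1,k}$ and $\alpha_k$. Condition (i), specialized to $m=m_{k+1,k}$, gives the needed $\tau_\sigma$. These are exactly the hypotheses of Definition \ref{def:LLL-reduced-2} for the pair $(m_{k+1,k},\alpha_k)$.

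The proof of Proposition \ref{prop:rhf_bound} only used those hypotheses. The Lov\'asz inequality $\delta\le H_\f((m,\alpha))H_\infty((m,\alpha))$ is bounded via integrality of $\alpha$, unit reducedness and size reducedness, which yields $H_\infty(\alpha)\ge(\delta^{2/d}-\mu^2)^{d/2}C^{-1}e^{-Ad}$. Combined with $H_\f(\alpha)\ge B^{-1}$, this gives the claim. So this step is verbatim.

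\emph{Step 2: the Hermite-type bound.} Since $u$ is lower unipotent, $v_1=(a_1,0,\ldots,0)k$, so $H(v_1)=H(a_1)$ because right multiplication by $K$ preserves local heights. Iterating Step 1 gives $H(a_1)\le Q^{2(i-1)}H(a_i)$ for each $i$. Multiplying over $i=1,\ldots,n$ gives
\[ H(a_1)^n\le Q^{2\sum_{i}(i-1)}\prod_i H(a_i)=Q^{n(n-1)}H(\det g). \]
Here I use that $H$ is multiplicative on ideles, so $\prod_iH(a_i)=H(\det a)=H(\det g)$, since $\det u=1$ and $|\det k_\nu|_\nu=1$ at every place. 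Taking $n$-th roots gives the first inequality.

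\emph{Step 3: the approximation bound.} By Lemma \ref{lemma:diagonal}, $\eta(g)\ge\min_iH(a_i)$. For each $i$, $H(a_i)\ge Q^{-2(i-1)}H(a_1)\ge Q^{-2(n-1)}H(v_1)$, using $Q\ge1$. Hence $H(v_1)\le Q^{2(n-1)}\eta(g)$.

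\emph{Expected obstacle.} The only point needing care is in Step 1. The rank 2 proof was phrased for $g\in\GL(2,\A_F)$, so I must check that it truly depends only on the pair $(m_{k+1,k},\alpha_k)$, and not on $g$ being $2\times2$. It does, because Proposition \ref{prop:rhf_bound}'s argument is a pure inequality about the adeles $m$ and $\alpha$.

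I also need $Q\ge1$, which holds since $\delta^{2/d}-\mu^2<1$, $C\ge1$ and $B\ge1$. Strictly, that needs $B,C\ge1$, which holds for all choices made in the paper. Otherwise I would keep $\max(1,Q)$ or use $\min_i$ more carefully.
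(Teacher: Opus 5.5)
Your proof is correct and is essentially the paper's argument: apply the rank 2 bound \eqref{ineq:ratio} to each consecutive pair $(m_{k+1,k},\alpha_k)$, chain and multiply the resulting inequalities to get the Hermite-type bound, and invoke Lemma \ref{lemma:diagonal} for the second inequality. Your concern about $Q\ge 1$, which the paper also uses without comment, does not require any extra assumption: for $n\ge 2$ the reduced conditions already force $B\ge 1$ (integrality of $\alpha_k$ gives $H_\f(\alpha_k)\le 1$) and $C\ge H_\f((m,1))\ge 1$, hence $Q\ge 1$.
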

\ignore{
By applying Proposition \ref{prop:rhf_bound} to $2 \times 2$ diagonal blocks of the $ua$ part of $g$, we have
\begin{align*}
H(a_i)^\frac{1}{2} \leq QH(a_{i+1})^\frac{1}{2}
\end{align*}
for every $i$, and consequently
\begin{align*}
H(a_i)^\frac{1}{2} \leq Q^jH(a_{i+j})^\frac{1}{2}
\end{align*}
for every $j$ as well.

Thus
\begin{align*}
&H(a_1)^\frac{n-1}{2} \leq Q^{1+\ldots+(n-1)}H(a_2)^\frac{1}{2}\cdots H(a_n)^\frac{1}{2} \\
&\Rightarrow H(a_1)^\frac{n}{2} \leq Q^\frac{(n-1)n}{2}H(\det g)^\frac{1}{2} \\
&\Rightarrow H(a_1) \leq Q^{n-1}H(\det g)^\frac{1}{n}
\end{align*}
as desired. The final inequality follows from Lemma~\ref{lemma:diagonal}.
}
%\end{proof}
\begin{remark}
Observe that, if we choose $F=\Q$ and $A=0, B=1, C=1$, then we get $Q=(\delta^2-\mu^2)^{-1/4}$. Therefore Proposition \ref{prop:rhf_bound_n} recovers the well-known RHF upper bound for the original LLL algorithm.
\end{remark}

%{\color{orange} might prove some more statements if there's time}

\subsection{Implementable pseudobasis version}

% size reduce
\ignore{
\begin{algorithm} 
\caption{\texttt{size-reduce}}
\label{alg:sizereduce_n}
\begin{algorithmic}[1]
\Require Parameters $\mu,C,\ell$, basis $\mathcal B=\{w_1,\ldots,w_d\}$ of $\cO_F$, $1 \leq j<k\leq n$, and input data $D=(M_F,M_I,M_\infty,M'_\infty)$.
\Ensure $m_{kj}$ satisfies $$H_\f(((m_F)_{kj},1))^\frac{1}{d}\max_{\sigma\mid\infty}|(m_\sigma)_{kj}| \leq \mu\mbox{ and } H_\f(((m_F)_{kj},1)) \leq C.$$
\State Compute the GSO coefficients of $M_\infty$ to find $(\vm_\infty)_{kj}$
\State Compute reals $m_i$ such that $(\vm_\infty)_{kj} = \sum m_i\rho(w_i)$
\State Compute integers $p_1,\ldots,p_d,q$ such that $$|p_i-qm_i| \leq (2\ell)^{-1}, 1 \leq q \leq C$$
\State $r \leftarrow q^{-1}(p_1w_1+\ldots+p_dw_d)$
\State $\displaystyle M_F \leftarrow E_{kj}(-r)M_F,\ M_\infty \leftarrow E_{kj}(-\rho(r))M_\infty$
\State \textbf{return} $D$
\end{algorithmic}
\end{algorithm}
}
\ignore{
%lovasz test
\begin{algorithm}
\caption{\texttt{Lovasz-test}}
\label{alg:lovasz_n}
\begin{algorithmic}[1]
\Require Parameter $\delta$, $k$, and $D=(M_F, M_I, M_\infty,M'_\infty)$
\State \textbf{if}
$$
\delta N(((m_F)_{k+1,k}\cO_F+\gb_k\gb_{k+1}^{-1}))\leq \prod_{\sigma:F\rightarrow \C}|((m_{k+1,k})_\sigma,a_{k+1,\sigma}/a_{k\sigma})|
$$
\textbf{return} true
\State \textbf{else} \textbf{return} false
\end{algorithmic}
\end{algorithm}
}
% adelic swap and clean-up
\begin{algorithm}
\caption{\texttt{adelic-swap}} 
\label{alg:adelicswap_n}
\begin{algorithmic}[1]
\Require $k,D=(M_F,M_I,M_\infty,M'_\infty)$ that failed \texttt{Lovasz-test}
%\Ensure $M_F,M_I,(M_\sigma)_{\sigma\mid\infty}$ modified for next \texttt{size-reduce}
\State Compute $x,y\in\cO_F$ such that $(m_F)_{k+1,k} = x/y$
\State Compute $c \in \cO_F$ such that $cx\gb_k^{-1},cy\gb_{k+1}^{-1} \subseteq \cO_F$
\State $\gd \leftarrow cx\gb_{k}^{-1}+cy\gb_{k+1}^{-1}$
\State Compute $u\in(\gb_k\gd)^{-1},v\in(\gb_{k+1}\gd)^{-1}$ such that $cxu+cyv=1$ %{\color{purple} technically we don't need to know $v$ here I guess} {\color{orange} yes but it comes in pair with $u$ so..}
\State $M'_\infty \leftarrow B_k\begin{pmatrix} \rho(u) & \rho(cy) \\ \rho(v) & \rho(-cx)\end{pmatrix}^{-1}M'_\infty$
\State $M_F \leftarrow B_k\begin{pmatrix}  & cy \\ 1/(cy) & -u\end{pmatrix}\cdot M_F \cdot B_k\begin{pmatrix} u & cy \\ v & -cx\end{pmatrix}$ %// $(m_F)_{k+1,k}$ becomes zero
\State $M_I \leftarrow B_k\begin{pmatrix} \gb_k\gd & \\ & (\gb_k\gd)^{-1}\end{pmatrix}M_I$
\State $M_\infty \leftarrow B_k\begin{pmatrix} & \rho(cy) \\ \rho(1/(cy)) & \rho(-u) \end{pmatrix}M_\infty$
\end{algorithmic}
\end{algorithm}
% scaling (both row k and k+1)
\begin{algorithm}
\caption{\texttt{clean}} \label{alg:clean_n}
\begin{algorithmic}[1]
\Require $k, D=(M_F,M_I,M_\infty,M'_\infty)$
\Ensure $(m_F)_{h,h'}=0$ for all $h=k$ and $k+1$, $h'<h$ %and the pseudobasis $((\gb_h,\vv'_h)_{1\leq h \leq k})$ is scale+d
%\State \texttt{scale+}$((\gb_h,\vv_h')_{1\leq h \leq k})$ \ \ // this is already scale+'d for $h < k-1$
%\State \textbf{for} $h=k-1$ \textbf{to} $1$
%\State \ \ $M_\infty \leftarrow E_{k+1,h}(-(m_F)_{k+1,h})E_{kh}(-(m_F)_{kh})M_\infty$
%\State \ \ $(m_F)_{k+1,h} \leftarrow 0, (m_F)_{kh} \leftarrow 0$
\State $\vv_k \leftarrow \vv'_k, \ \vv_{k+1} \leftarrow \vv'_{k+1}$
\State $(m_F)_{h,h'} \leftarrow 0$ for all $h=k$ and $k+1$, $h'<h$
\end{algorithmic}
\end{algorithm}

% end product
\begin{algorithm}
\caption{Adelic LLL algorithm (pseudobasis version)} \label{alg:lll_p_n}
\begin{algorithmic}[1]
\Require Parameters $\delta,\mu,A,B,C$, basis $\mathcal B$ of $\cO_F$, choices of HSVP solvers for the subroutines, and a pseudobasis of the input module $\gb_1\vv_1+\ldots+\gb_n\vv_n \subseteq \rho(\cO_F^n)$.
\Ensure Data $D=(M_F,M_I,M_\infty,M'_\infty)$ representing a reduced element of $\GL(n,\A_F)$ for the input module.
\State $\displaystyle M_F \leftarrow \mathrm{Id}_n, M_I \leftarrow \diag(\gb_1^{-1},\ldots,\gb_n^{-1}), M_\infty \leftarrow \begin{pmatrix} \vv_1^T & \cdots & \vv_n^T\end{pmatrix}^T,M'_\infty \leftarrow M_\infty$
\State $k \leftarrow 1$
\State \textbf{while} $k<n$ \textbf{do}
\State \ \ \textbf{if}($k=1$) \texttt{scale}$(\gb_1,\vv_1,a_{1\infty})$, $\vv'_1 \leftarrow \vv_1$
\State \ \ \texttt{scale}$(\gb_{k+1}\gb_{k}^{-1},\vv_{k+1},a_{k+1,\infty})$
\State \ \ \texttt{unit-reduce} row $k+1$
\State \ \ $\vv'_{k+1} \leftarrow \vv_{k+1}$
\State \ \ Apply \cite[Algorithm 3.3]{LPSW19} to row $k+1$; update $M_\infty,M'_\infty$ accordingly 
\State \ \ \texttt{size-reduce} $m_{k+1,k},\ldots,m_{k+1,1}$; update $M_F, M_\infty$ accordingly
\State \ \ \textbf{if}(\texttt{Lovasz-test} at row $k$) 
\State \ \ \ \ $k \leftarrow k+1$
\State \ \ \textbf{else} 
\State \ \ \ \ \texttt{adelic-swap} rows $k$ and $k+1$
\State \ \ \ \ \texttt{clean} rows $k, k+1$
\State \ \ \ \ $k \leftarrow \max(1,k-1)$
\State \textbf{return} $D$
\end{algorithmic}
\end{algorithm}

We follow much of the notations and conventions from the previous section on the $n=2$ case. We let $\mathcal B$ and $\mathcal U$ be as earlier, and assume the input module is given in the integral pseudobasis form $\gb_1\vv_1+\cdots+\gb_n\vv_n \subseteq \rho(\cO_F^n)$ with $\vv_1,\ldots,\vv_n \in \rho(\cO_F^n)$. The matrices $M_F,M_I,M_\infty=(M_\sigma)_{\sigma\mid\infty},g$ are defined in the analogous manner, as well as the notations referring to their entries, except now that $M_F$ has multiple nontrivial entries we refer to its $(k,j)$-entry by $(m_F)_{kj}$. Additionally, we define $M'_\infty = \rho(M_F^{-1})M_\infty$, and refer to its $k$-th row as $\vv'_k$; we will need this for bitsize control.

Algorithms \ref{alg:adelicswap_n} to \ref{alg:lll_p_n} provide a complete pseudocode description of adelic LLL for rank $n$ module lattices, with Algorithm \ref{alg:lll_p_n} being the main algorithm. We omitted the descriptions of \texttt{unit-reduce} and \texttt{Lovasz-test} for brevity, as they are straightforward generalizations of their rank 2 counterparts. Indeed, as with the classical LLL, it operates on one $2 \times 2$ block at a time, and the operation is no different than the rank 2 adelic LLL. 

A few notations are introduced to pinpoint where the algorithm is operating. 
In Algorithm \ref{alg:adelicswap_n}, $B_k(M)$ for a $2\times 2$ matrix $M$ denotes the matrix constructed by first taking the $n \times n$ identity matrix and then replacing the $2 \times 2$ block matrix starting at $(k,k)$-entry by $M$.
In Algorithm \ref{alg:clean_n}, $E_{kj}(x)$ is the elementary matrix constructed by first taking the $n \times n$ identity matrix and then taking its $(k,j)$-entry to be $x$. In addition, line 5 of Algorithm \ref{alg:adelicswap_n}, and the correctness of Algorithm \ref{alg:clean_n}, are both explained by the relation $M'_\infty = \rho(M_F^{-1})M_\infty$.

\subsubsection{A note on bitsize control} We continue to employ the bitsize control method of \cite{LPSW19}. One difference from the rank $2$ case is that, while in Algorithm \ref{alg:lll_p} the entire data at the start and the end of the loop is in pseudobasis form, in the general case it is not. This is why we use the additional bookkeeping variable $M'_\infty$, whose $i$-th row vector we denote by $\vv_i'$. The first $k$ rows of $g$ span the same module lattice as $\gb_1\vv'_1+\ldots\gb_k\vv'_k$, and this is what we use to apply (the corrected version of) \cite[Algorithm 3.3]{LPSW19} in line 8 of Algorithm \ref{alg:lll_p_n}. Similarly as in the proof of Theorem \ref{thm:main}, we obtain the following.

%Each time at the start of the loop in Algorithm \ref{alg:lll_p_n}, prior to running step $4$, it is ensured that $(m_F)_{h+1,h}=\ldots=(m_F)_{h+1,1}=0$ for all $h \geq k$ and that $(\gb_k,\vv_k)$ and $(\gb_{k+1},\vv_{k+1})$ are scale+d. The fact that rows $h \geq k$ of $M_F$ are ``clean'' helps the algorithm in a couple of spots. In Algorithm \ref{alg:sizereduce_n}, it initializes $M_F$ for size reduction; in Algorithm \ref{alg:adelicswap_n}, step 6, it simplifies the matrix multiplication from the right. Most importantly, it allows to keep the bitsizes of $\vv_k$ and $\vv_{k+1}$ in check at the end of the loop. As argued in \cite[Sec 3.2]{LPSW19} and its correction by our Appendix below, their sizes are bounded by $\mathrm{poly}(\log H(a_h),d,\log|\Delta_F|)$ for $h=k,k+1$ respectively. For all $h=1,\ldots,n$, $H(a_{h})$ is bounded throughout the algorithm, since $H(a_1)\cdots H(a_{h+1})$ never increases during the algorithm, and $H(a_1)\cdots H(a_{h})$ has a lower bound. This keeps the sizes of $M_I$ and $M_\infty$ bounded by $\mathrm{poly}(input,d,\log|\Delta_F|)$. Therefore, we obtain the following generalization of Theorem \ref{thm:main}.

\begin{theorem}[Rank n reduction]
\label{thm:main_n}
Algorithm \ref{alg:lll_p_n} returns the data $(M_F,M_I,M_\infty,M'_\infty)$ representing a reduced element of $\GL(n,\A_F)$ corresponding to the input module in $\mathrm{poly}(\text{input},d,\log|\Delta_F|)$ and $\mathrm{poly}(\text{input})$ many calls to the chosen HSVP oracles.
\end{theorem}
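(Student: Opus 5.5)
The plan is to follow the proof of Theorem~\ref{thm:main}, splitting the claim into three parts: correctness of the output, a polynomial bound on the number of loop iterations, and polynomial control of bitsizes inside each iteration.

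\emph{Correctness.} When the loop exits with $k=n$, every adjacent $2\times2$ block has been treated exactly as in rank $2$.
\begin{itemize}
\item Lines 4--5 use \texttt{scale} on $\gb_{k+1}\gb_k^{-1}$, exactly as in \texttt{scale+}. This makes $\alpha_k$ integral with $H_\f(\alpha_k)\ge B^{-1}$, giving class reducedness.
\item Line 6 gives unit reducedness of $\alpha_k$.
\item Line 9 runs Algorithm~\ref{alg:sizereduce} on each $m_{k+1,j}$, for $j=k$ down to $1$. Each output satisfies (i) of Definition~\ref{def:LLL-reduced-n}, by the argument following \eqref{eq:sizereduce_new}.
\item The Lov\'asz test certifies (ii) at index $k$.
\end{itemize}
Two invariance facts are needed for these conditions to persist once $k$ has moved on.
\begin{itemize}
\item Operations on rows $\ge k+1$ are left multiplication by elements of $\GL(n,F)$ that are lower triangular in the relevant rows, or right multiplication by elements of $\prod_\nu\GL(n,\cO_\nu)$. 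These leave $a_1,\dots,a_k$, and the $m_{ij}$ with $i\le k$, unchanged.
\item \texttt{adelic-swap} followed by \texttt{clean} touches only rows $k,k+1$. After it, $k$ is decreased, so the affected conditions are re-established later.
\end{itemize}
This is the usual LLL invariant: when the pointer is at $k$, rows $1,\dots,k$ already form a reduced $k\times k$ block. It follows that the output is reduced and represents the input module, since each step preserves the double coset.

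\emph{Number of iterations.} This comes from Proposition~\ref{prop:swap_bound_n}.
\begin{itemize}
\item Each swap multiplies $E(g)$ by at most $\delta$. The other steps leave every $H(v_1\wedge\cdots\wedge v_h)$ unchanged, because they act by triangular matrices and by scalars in $F^*$.
\item I need an explicit lower bound on $\kappa$ that is polynomial in the input. I would generalize Lemma~\ref{lemma:lowest_ht} to exterior powers: $v_1\wedge\cdots\wedge v_h$ is a vector of a rank-$\binom nh$ module with integral pseudobasis. The same argument then gives $H\ge N(\sum_I\gb_I)$ with $\gb_I=\prod_{i\in I}\gb_i$, so $\log\kappa\ge-\mathrm{poly}(\text{input})$.
\item Since the input lies in $\rho(\cO_F^n)$, $\log E(g)\le\mathrm{poly}(\text{input})$.
\end{itemize}
So there are polynomially many swaps. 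As in LLL, the number of iterations is at most $n-1$ plus twice the number of swaps. Each iteration makes $O(n)$ oracle calls: two \texttt{scale} calls in dimension $d$, and $k\le n$ size reductions in dimension $d+d_E$.

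\emph{Bitsize control.} This is the main obstacle. I would argue in the same way as in the proof of Theorem~\ref{thm:main}.
\begin{itemize}
\item \texttt{scale} together with the corrected \cite[Algorithm 3.3]{LPSW19}, applied to the pseudobasis $\gb_1\vv_1'+\cdots+\gb_{k+1}\vv_{k+1}'$, bounds the bitsizes of $\gb_{k+1}$, $\vv'_{k+1}$ and $a_{k+1,\infty}$ by $\mathrm{poly}(\log H(a_{k+1}),d,\log|\Delta_F|)$. The $\gamma_d$-modified bounds listed before Algorithm~\ref{alg:scale+} show this.
\item Then I would show that every $H(a_h)$ stays polynomially bounded above and below.
\begin{itemize}
\item The product $H(a_1)\cdots H(a_h)=H(v_1\wedge\cdots\wedge v_h)$ never increases, by the potential argument.
\item It is at least $\kappa$-type bounds for each $h$, by the exterior-power lemma.
\item So each individual $H(a_h)$ lies within $\exp(\mathrm{poly}(\text{input}))$.
\end{itemize}
\item The size-reduced entries $m_{k+1,j}$ have denominators $q$ with $|q_\sigma|\le C^{1/d}$, so $M_F$ has polynomial bitsize.
\item \texttt{adelic-swap} multiplies by elements $c,x,y,u$ whose sizes are controlled by the scaled ideals.
\item \texttt{clean} resets rows $k,k+1$ of $M_F$ using $M'_\infty=\rho(M_F^{-1})M_\infty$. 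This stops entries of $M_F$ from accumulating across iterations.
\end{itemize}
Verifying the last two points carefully is the delicate part. I would first check that the bookkeeping identity $M'_\infty=\rho(M_F^{-1})M_\infty$ is maintained by lines 5--8 of Algorithm~\ref{alg:adelicswap_n}, and only then bound the growth at each step.
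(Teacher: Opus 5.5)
Your three-part outline follows the paper's own argument. For Theorem~\ref{thm:main_n} the paper says only ``similarly as in the proof of Theorem~\ref{thm:main}'', and the ingredients it has in mind are the ones you use:
\begin{itemize}
\item the swap count from Proposition~\ref{prop:swap_bound_n}, where your exterior-power version of Lemma~\ref{lemma:lowest_ht} is exactly the explicit $\kappa$ the paper alludes to;
\item \texttt{scale} together with the corrected ideal-compatible rounding on $\gb_1\vv'_1+\cdots+\gb_{k+1}\vv'_{k+1}$;
\item two-sided bounds on each $H(a_h)$, coming from the monotonicity and the lower bounds of $H(a_1)\cdots H(a_h)$.
\end{itemize}
Leaving the \texttt{adelic-swap}/\texttt{clean} bitsize estimates as a routine check matches the paper's level of detail.

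One step, however, fails as you justify it. You claim that every $m_{k+1,j}$ produced in line 9 satisfies condition (i) of Definition~\ref{def:LLL-reduced-n} ``by the argument following \eqref{eq:sizereduce_new}''. That argument needs the finite part of $m$ to vanish before the reduction, so that $H_\f((m-r,1))\le H_\f(q^{-1})=H_\infty(q)$.
\begin{itemize}
\item This holds for $j=k$, since row $k+1$ of $M_F$ is clean at that point.
\item It fails for $j<k$ if you subtract $r$ times row $j$ of $g$, i.e.\ $M_F\leftarrow E_{k+1,j}(-r)M_F$. That update also adds $-r\,(m_F)_{j,j'}$ to the finite part of $m_{k+1,j'}$ for every $j'<j$.
\item Already for $n=3$ you get $(m_F)_{31}=r_{32}r_{21}-r_{31}$. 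Its denominator ideal can have norm as large as $N(q_{32}q_{21}q_{31})$, i.e.\ up to $C^3$. But (i) forces $H_\f((m,1))\le\prod_\sigma\tau_\sigma\le C$.
\item The freedom in the finite-place Iwasawa decomposition cannot cancel this in general. For example, at a prime dividing $q_{21}$ and $q_{32}$ but not $q_{31}$, the valuation of that entry is the same for every choice.
\end{itemize}

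The repair is to reduce against the pseudobasis vectors instead. Left-multiply by the lower unipotent matrix $I-r\,e_{k+1}e_j^{T}M_F^{-1}\in\GL(n,F)$. Concretely, set $M_F\leftarrow M_F-r\,e_{k+1}e_j^{T}$ and subtract $\rho(r)\vv'_j$ from row $k+1$ of $M_\infty$; this leaves $M'_\infty$ and $E(g)$ unchanged. The update touches only the $(k+1,j)$ entry of $M_F$ and the $m_{k+1,l}$ with $l\le j$. Hence $(m_F)_{k+1,j}=0$ right before that entry is reduced, and the rank-2 computation applies verbatim. Alternatively, feed Algorithm~\ref{alg:sizereduce} the shifted target $m_\infty-\rho((m_F)_{k+1,j})$ and take $r=(m_F)_{k+1,j}+p/q$.

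With either repair, your later claim that row $k+1$ of $M_F$ has denominators $q$ with $|q_\sigma|\le C^{1/d}$ also becomes true. Proposition~\ref{prop:rhf_bound_n} uses only the subdiagonal entries $m_{k+1,k}$, so the quality bound is unaffected either way. The paper's one-line proof does not address this point either.
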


\subsection{Theoretical performance of adelic LLL} \label{sec:reduction}

Following \cite{MS20}, we define $(\gamma,n)$-ModuleHSVP (resp.  $(\gamma,n)$-ModuleSVP) to be the problem of finding, given a rank $n$ module lattice $L$ over a field $F$ of degree $d$, a nonzero vector of $L$ of length at most $\gamma(\det L)^{1/(nd)}$ (resp. $\gamma \lambda_1(L)$). We also refer to $(\gamma,1)$-ModuleHSVP as $\gamma$-IdealHSVP.

Let $0 < \delta,\mu < 1$, $A,B,C$ be the parameters chosen for adelic LLL (Algorithm \ref{alg:lll_p}). Recall
\begin{align*}
Q = (\delta^\frac{2}{d}-\mu^2)^{-\frac{d}{4}}(CB)^\frac{1}{2}e^\frac{Ad}{2}.
\end{align*}

Proposition \ref{prop:rhf_bound_n} establishes that adelic LLL, applied to a rank $n$ module lattice $L=L(g)$, returns a rank 1 submodule $\Lambda \subseteq L$ of height at most $Q^{n-1}H(\det g)^{1/n}$ (resp. $Q^{2(n-1)} \eta(g)$), or equivalently, of determinant at most $Q^{n-1}(\det L)^{1/n}$
(resp.  $\Delta_F^{1/2} Q^{2(n-1)} \eta(g)$). Hence, if one solves $\gamma$-IdealHSVP,
then one finds a vector of $\Lambda$ of length at most $\gamma\cdot Q^{(n-1)/d}(\det L)^{1/(nd)}$ (resp. $\gamma \cdot \Delta_F^{1/(2d)}  Q^{2(n-1)/d} \eta(g)^{1/d}$), then trivially one also finds a vector of $L$ of length at most $\gamma \cdot Q^{(n-1)/d} (\det L)^{1/(nd)}$
(resp. $\gamma\cdot\Delta_F^{1/(2d)}  Q^{2(n-1)/d} d^{-1/2} \lambda_1(L)$ by Lemma~\ref{lemma:module}). This proves
\begin{theorem}\label{thm:reduction}
Fix a number field $F$, and let $Q$ be as above. Then there exists a reduction from $(\gamma Q^{(n-1)/d},n)$-ModuleHSVP and $(\gamma \Delta_F^{1/(2d)}  Q^{2(n-1)/d} d^{-1/2},n)$ -ModuleSVP to $\gamma$-IdealHSVP, of $\mathrm{poly}(\text{input},d,\log|\Delta_F|)$ time complexity and $\mathrm{poly}(\text{input})$ calls to the chosen HSVP oracle.
\end{theorem}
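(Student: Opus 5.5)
The reduction is the composition suggested in the paragraph preceding the statement. Given a rank $n$ module lattice $L$ as a pseudobasis $\gb_1\vv_1+\cdots+\gb_n\vv_n\subseteq\rho(\cO_F^n)$, we run Algorithm~\ref{alg:lll_p_n}. By Theorem~\ref{thm:main_n} this takes $\mathrm{poly}(\text{input},d,\log|\Delta_F|)$ time and $\mathrm{poly}(\text{input})$ oracle calls. It returns data representing a reduced $g\in\GL(n,\A_F)$ with $L(g)=L$. We then extract the rank~1 submodule corresponding to the first row $v_1$ of $g$. As suggested after Theorem~\ref{thm:main}, we do this by undoing the final size reductions, which act by lower unipotent matrices and do not change the first row. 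The first row then becomes a pseudobasis element $\gb_1\vv_1$ with $\Lambda:=\gb_1\vv_1\subseteq L$ and $N(\gb_1)H_\infty(\vv_1)=H(v_1)$. By Lemma~\ref{lemma:height} in rank~1, $\det\Lambda=|\Delta_F|^{1/2}H(v_1)$. We hand $\Lambda$, a rank~1 module lattice, to the $\gamma$-IdealHSVP oracle. Any vector it returns lies in $\Lambda\subseteq L$ and is nonzero.

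For the HSVP bound we use Proposition~\ref{prop:rhf_bound_n}, which gives $H(v_1)\le Q^{n-1}H(\det g)^{1/n}$. Combined with Lemma~\ref{lemma:height}, this yields
$\det\Lambda=|\Delta_F|^{1/2}H(v_1)\le Q^{n-1}\bigl(|\Delta_F|^{n/2}H(\det g)\bigr)^{1/n}=Q^{n-1}(\det L)^{1/n}$.
The oracle returns $w\in\Lambda$ with $\|w\|\le\gamma(\det\Lambda)^{1/d}\le\gamma Q^{(n-1)/d}(\det L)^{1/(nd)}$, which solves $(\gamma Q^{(n-1)/d},n)$-ModuleHSVP.

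For the SVP bound we instead use $H(v_1)\le Q^{2(n-1)}\eta(g)$ from Proposition~\ref{prop:rhf_bound_n}. This gives $\det\Lambda\le|\Delta_F|^{1/2}Q^{2(n-1)}\eta(g)$. Lemma~\ref{lemma:module} gives $\eta(g)\le d^{-d/2}\lambda_1(L)^d$, so $(\det\Lambda)^{1/d}\le|\Delta_F|^{1/(2d)}Q^{2(n-1)/d}d^{-1/2}\lambda_1(L)$. Hence $\|w\|\le\gamma|\Delta_F|^{1/(2d)}Q^{2(n-1)/d}d^{-1/2}\lambda_1(L)$, which is the claimed ModuleSVP factor.

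The step needing the most care is the extraction of $\Lambda$ from the output data $(M_F,M_I,M_\infty,M'_\infty)$. We must check two things: that the first-row pseudobasis element $\gb_1\vv'_1$ really lies in $L$ and has height $H(v_1)$, and that the rank~1 form of Lemma~\ref{lemma:height} applies to it. The first holds because the first row of $M_F$ is trivial, so $\vv_1=\vv'_1$. For the second, we use that $H$ is invariant under scaling by $F^*$, which handles the normalisation of the first row. The remaining costs are just the bitsizes of $\gb_1$ and $\vv'_1$, which \texttt{scale} keeps polynomial. The rest is bookkeeping of the inequalities above.
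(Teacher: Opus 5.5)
Your proposal is correct and follows the paper's own argument: run adelic LLL, take the first-row rank-1 submodule $\gb_1\vv_1$, convert the two bounds of Proposition~\ref{prop:rhf_bound_n} into determinant bounds via $\det\Lambda=|\Delta_F|^{1/2}H(v_1)$, and finish with the $\gamma$-IdealHSVP oracle and Lemma~\ref{lemma:module}. One small point: the rank-1 determinant formula does not come from $F^*$-invariance of $H$; it holds because $\gb_1\vv_1$ is, place by place, a similarity image of $\rho(\gb_1)$ with ratio $\|\vv_{1,\sigma}\|$, and the paper takes this identity for granted as well.
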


Naturally one would be interested in the size of $Q$, since Theorem \ref{thm:reduction} is most meaningful when $\gamma$ is much greater than $Q^{(n-1)/d}$ (and when $n$ is small relative to $d$). One can choose $\delta=0.999$ and $\mu=0.5$ as in the classical LLL. As discussed earlier, one can take $C = \mu^{-d}\gamma_{2d}^{4d^2}|\Delta_F|$ (taking $E=F$ in Algorithm \ref{alg:sizereduce}) and $B=\gamma_d^{d^2}|\Delta_F|^{1/2}$. 

For a further discussion, let us restrict to the case $F=\Q(\zeta_{2^{\kappa+1}})$ ($\kappa \geq 2$), so that $d= 2^\kappa$. In this case, $|\Delta_F|=d^d$. In addition, from \cite[Corollary 6.4]{CDPR16}, one can take $A= O(\sqrt{d\log d})$. Hence we obtain
$$ Q^\frac{1}{d} = \gamma_{2d}^{2d}\gamma_d^{d/2}e^{O(\sqrt{d\log d})}.$$
If we use the slide algorithm \cite{GN08} or SDBKZ \cite{MW16} of blocksize $\beta$ for the subroutine, for which rigorous performance estimates are available, then $\gamma_d^d \leq \beta^{d/(2\beta)}$, and likewise $\gamma_{2d}^{2d} \leq \beta^{d/\beta}$. Taking $\beta = \sqrt d$ for example, we have
$$ Q^\frac{1}{d} \leq e^{\frac{5}{8}\sqrt{d}\log d \cdot(1+o_d(1))}.$$
Therefore, for instance, if $n\leq\log d$ and $\gamma \geq e^{\sqrt{d}(\log d)^3}$, then Theorem \ref{thm:reduction} yields a $e^{O(\sqrt{d})}$-time reduction from $(\gamma^{1+o_d(1)},n)$-ModuleHSVP to $\gamma$-IdealHSVP.

One may also be interested in the following comparison. Suppose $\beta \leq \sqrt d$. Applying the slide algorithm or SDBKZ of blocksize $\beta$ directly to a rank $n$ module lattice $L$, one obtains a vector of length $\approx \beta^{nd/(2\beta)}\cdot(\det L)^{1/(nd)}$. On the other hand, if we apply adelic LLL using the same algorithm as the SVP oracle, and apply the same oracle to the resulting rank 1 submodule as well, then one obtains a vector of length $\approx \beta^{d/(2\beta)}\cdot\beta^{2.5nd/(2\beta)}\cdot(\det L)^{1/(nd)}$ up to terms of lesser size, by the formula for $Q^{1/d}$ above. In this sense, it may be said that the current version of adelic LLL is asymptotically ``2.5 times worse'', since, for large $n$ and $d$, $\beta$ needs to be about a factor of 2.5 greater to match the output quality of BKZ-$\beta$. This loss mainly comes from the size of the parameter $C$, which measures how well one can solve the Diophantine approximation problem associated to the size reducedness. This motivates the following question.

\begin{question}
Could \texttt{size-reduce} (Algorithm \ref{alg:sizereduce}) be improved to lower the size of the required $C$, without increasing the time complexity? In particular, would it be possible to exploit the field structure to achieve this?
\end{question}

%{\color{orange} for very nice fields maybe we can claim superiority of adelic LLL --- will think about it later --- and we can connect to Chinburg's works}

This is not entirely implausible, in light of the following observation. %{\henry{Actually I think the following only holds for fields with no complex embeddings, the volume calculation involves discs otherwise so we should get a factor $4/\pi$ per pair or something of the sort which I don't see here}}{\color{orange} I think it applies to the complex case, and it's just that I'm not using the complex structure in any smart way... or it can be an issue of me confusing the metric. In any case conclusion doesn't change}
\begin{proposition}\label{prop:minkowski}
If $C= \mu^{-d}|\Delta_F|$, then there exists $p,q \in \cO_F$ satisfying \eqref{eq:sizereduce_new} for any given $m_\infty\in\A_\infty$. 
\end{proposition}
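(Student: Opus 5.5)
We apply Minkowski's convex body theorem to the lattice used in \texttt{size-reduce} with $E=F$, i.e.\ the rank $2d$ lattice $\Lambda\subseteq(F\otimes\R)^2\cong\R^{2d}$ spanned by the rows of \eqref{eq:thematrix} with $\mathbf M_E=\mathbf M$ and $\omega=\mu C^{-1/d}$. Its vectors are $(\omega\rho(q),\rho(qm_\infty-p))$ with $p,q\in\cO_F$. By the determinant computation preceding \eqref{eq:C^1/d} with $d_E=d$ (so $\Delta=|\Delta_F|$), we have $\det\Lambda=\omega^d|\Delta_F|$.

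Rather than a Euclidean ball, we use the box-type convex body adapted to the embeddings,
\begin{align*}
K=\{(x,y)\in(F\otimes\R)^2:\ |x_\sigma|\le \omega C^{1/d}=\mu,\ |y_\sigma|\le\mu\ \text{for all }\sigma\}.
\end{align*}
This body is symmetric and convex. A nonzero lattice point $(\omega\rho(q),\rho(qm_\infty-p))\in K$ gives exactly $|q_\sigma|\le C^{1/d}$ and $|q_\sigma m_\sigma-p_\sigma|\le\mu$. Moreover $q\neq0$, since otherwise $\rho(p)$ would be a nonzero vector of $\rho(\cO_F)$ with all $|\sigma(p)|\le\mu<1$, which contradicts the product formula ($|N(p)|\ge1$). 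Hence it suffices to check $\vol(K)\ge 2^{2d}\det\Lambda$, using the compact version of Minkowski's theorem since $K$ is closed.

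The volume computation, in the metric of $F\otimes\R$ inherited from $\R^{r_1}\times\C^{2r_2}$, is the main technical step. For each factor, a real coordinate contributes an interval of length $2\mu$. A complex place contributes a disc of radius $\mu$, but measured in the embedded metric (where $\rho$ scales the complex coordinate by $\sqrt2$) its area is $2\pi\mu^2$, which is at least $(2\mu)^2$. So each factor $K_x,K_y$ has volume at least $(2\mu)^d$, and
\[
\vol(K)\ge 2^{2d}\mu^{2d}.
\]
The needed inequality then reads $\mu^{2d}\ge\omega^d|\Delta_F|=\mu^dC^{-1}|\Delta_F|$, i.e.\ $C\ge\mu^{-d}|\Delta_F|$. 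This is exactly the hypothesis, with equality.

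The main obstacle is getting the normalisation of volumes at complex places right. The claim is that the complex discs only help, but one must check carefully that the lattice determinant $|\Delta_F|^{1/2}$ per copy and the disc area are measured in the same metric. If a stray factor appears, I would fall back on the real-coordinate box (squares inscribed in the discs, side $\sqrt2\mu$ in the embedded metric), which still yields area $2\mu^2\cdot2$ per complex pair and so exactly $2^d\mu^d$.
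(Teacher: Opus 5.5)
Your proof is correct and is essentially the paper's: you apply Minkowski's convex body theorem to the rank-$2d$ lattice of \eqref{eq:thematrix} with $E=F$, using $\det\Lambda=\omega^d|\Delta_F|$ and a symmetric body of volume at least $2^{2d}\mu^{2d}$. The paper uses the box $[-\mu,\mu]^{2d}$ in isometric real coordinates, which is your fallback; at a complex place this is the square of side $2\mu$ inscribed in your disc of radius $\sqrt2\mu$, so the paper meets the volume bound exactly, whereas your discs give extra slack of $\pi/2$ per complex place, and your explicit check that $q\neq0$ supplies a step the paper's proof leaves implicit.
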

\begin{proof}
Consider the lattice $\Lambda$ spanned by the rows of \eqref{eq:thematrix}, with $E=F$. This is a rank $2d$ lattice with $\det \Lambda =\omega^d|\Delta_F|$. Therefore, by Minkowski's first theorem, there exists a nonzero vector of this lattice in the box $[-\mu^{-1}\omega|\Delta_F|^{1/d},\mu^{-1}\omega|\Delta_F|^{1/d}]^d \times [-\mu,\mu]^d$, since this box has volume $2^{2d}\omega^d|\Delta_F|=2^{2d}\det \Lambda$. Recall that every vector of $\Lambda$ is of the form $(\omega \rho(q), \rho(qm_\infty-p))$; the elements $p,q\in\cO_F$ corresponding in this manner to the vector in the box is clearly seen to satisfy \eqref{eq:sizereduce_new} with $C= \mu^{-d}|\Delta_F|$.
\qed
\end{proof}

This statement suggests that the extra factor $\gamma_{2d}^{4d^2}$ in our choice of $C$ does not reflect some intrinsic limitation of adelic LLL, but rather a limitation of the current method of size reduction.

%Therefore, we could in principle take $C=\mu^{-d}|\Delta_F|$, far better than our formula $C=\mu^{-d}\gamma_{2d}^{4d^2}|\Delta_F|$, even if we use an exact SVP algorithm (so $\gamma_{2d}^{2d} = \Theta(\sqrt{d})$). Hence the current limitation of adelic LLL seems to be not an intrinsic but a methodical one.
%\phong{I'm not sure what we're trying to say here. Sure, we can better parameters for which we are guaranteed that the HSVP instance has a soution. But in that case, solving that HSVP means solving SVP (or HSVP with very small approximation factors) in dimension $2d$, whereas the initial module lattice has rank $2d$ over the integers, so we might have has well applied our HSVP oracle to the input lattice directly.} 
%{\color{orange} All I'm trying to say is the extra factor $\gamma_{2d}^{4d^2}$ appears because of our method, not because of the way things are. So a better method --- perhaps some kind of continued fraction? my \$5-a-month AI also explored some linear programming methods --- can give a smaller extra factor. 8/26/26 fixed the wording a little bit}

If one can find a way to reduce the exponent on $\gamma_{2d}$ by a factor of half or more, then there is hope for adelic LLL to theoretically outperform the standard reduction algorithms in practically important cases. The loss from the size of $B$ can be circumvented by other methods. For instance, since it is widely believed that the maximal totally real subextension of $\Q(\zeta_{2^{\kappa+1}})$ has class number one --- under the buzzword \emph{Weber's class number problem}; see for instance \cite{Mil15} --- using the method of \cite{BS16} it is possible to set $B=1$ for modules over a field of such kind.

\begin{remark}
One may be interested in making the above comparisons in terms of the heights of the rank $1$ submodules, rather than the lengths of the lattice vectors as we have done here. It turns out that doing so one observes essentially the identical quantitative behavior.

To elaborate, continue with the earlier assumptions, and suppose one applies BKZ-$\beta$ to find a vector $\vv$ of $L$. By \cite[Lemma 2.2]{Kim24} for instance, the rank 1 submodule $\cO_F \cdot \vv$ has height at most
$$d^{-d/2}\|\vv\|^{d} \leq d^{-d/2}\beta^{\frac{nd^2}{2\beta}}\cdot(\det L)^\frac{1}{n}.$$
Comparing with the height bound of adelic LLL 
$$Q^{n-1}(\det L)^\frac{1}{n} \approx \beta^{2.5nd^2/(2\beta)}\cdot(\det L)^\frac{1}{n},$$
one again notices that, asymptotically in $n$ and $d$, the latter is 2.5 times worse.
\end{remark}

\newpage

\ignore{

\subsection{Theoretical performance analysis}
{\color{orange} this is where we discuss the reduction to ideal-SVP, and the comparison of running adelic LLL versus running unstructured reduction.

For $\ell$ of the max real s/fld of 2-power cyclotomic field, see Prop 4.1 of \cite{FN05}. I think $B=1$ for the real s/fld (on Weber's conjecture), and $A$ and $\ell$ are similar to the cyc fld. This saves us from the disaster --- I hope.}

\subsection{Computing constants $A$ and $B$}

Using \cite[Proof of Theorem 2.8]{CDPR16}, we get that $\Log(\cO_F^\times)=2\Log(\cO_{F^+}^\times)$, and this implies that the constant $A_{F^+}=\frac{1}{2}A_F$ (notice the factor $2$ difference with CDPR16, because of a slightly different definition of $\Log$). This is essentially using \cite[Cor. 4.13]{Was97} to say that $\Z[\zeta]^\times$ is generated by $\zeta$ and $\Z[\zeta+\zeta^{-1}]^\times$. Therefore we indeed get $A_{F^+}=O(\sqrt{d\log d})$ from \cite[Cor. 6.4]{CDPR16} for prime-power cyclotomic fields $F$. This part is not conditional on Weber's conjecture.

As stated before, conditionally on Weber's class number conjecture, for cyclotomic fields $F=\Q(\zeta_{2^k})$ with $k>1$, we indeed get $B_{F^+}=1$ as there is only one ideal class, and this class contains $\cO_F$, the integral ideal with smallest possible norm. Note that Weber's class number is not really a limitation in practice as the conjecture holds for $k\le 8$ and for $k=9$ under GRH (see Miller14 ref in CDPR16). $k=9$ is the case used in cryptographic standards such as ML-KEM.

{\color{orange} Thanks Henry, I was going to do this myself... but while you're at it, do your references say something about $\ell$ as well?}

\henry{I will write this down eventually, we need to take basis elements $\zeta^i+\zeta^{-i}$ and then $\ell$ stays roughly the same. There is an efficient algorithm to convert from the orthogonal basis of $\cO_F$ to this basis of $\cO_{F^+}$, we might need to check that this doesnt affect sizes too badly.}

{\color{purple}\subsection{RHF computation} Here's an attempt at the RHF argument for general ranks. Please check the details!

Assume $[F:\Q]=d$, and that we are given an ad\`ele $g\in \GL(n,\A_F)$. The aim is to compare the following two strategies for finding a short vector in $L(g)$.
\begin{enumerate}
    \item Apply LLL directly on the $nd$ dimensional lattice $L(g)$. This outputs a vector $s_1\in L(g)$.
    \item Apply rank $n$ adelic LLL on $g$. This outputs a rank $1$ submodule $v_1$. Then apply LLL on $L(v_1)$. This yields a vector $s_2\in L(g)$.
\end{enumerate}

Now we analyse both strategies, letting $\gamma := (4/3)^{\frac14}$ and using $\gamma^d$ RHF for LLL throughout (instead of $\gamma^{d-1}$ but thats close enough).
\begin{enumerate}
    \item The first strategy is straightforward: we get \[\|s_1\|\le \gamma^{nd}\left(\det(L(g))\right)^{\frac{1}{nd}}.\]
    \item In the second strategy, we use \ref{prop:rhf_bound_n} to get $H(v_1)\le Q^{n-1}H(\det g)^{\frac{1}{n}}$, as well as the LLL bound to get $\|s_2\|\le \gamma^d \left(\det L(v_1)\right)^{\frac{1}{d}}$. Combining those inequalities with the fact that $\det L(v_1)=|\Delta_F|^{\frac12}H(v_1)$ and $\det L(g)=|\Delta_F|^{\frac{n}{2}}H(\det g)$, we get
    \begin{align*}
        \|s_2\| &\le \gamma^d \left(\det L(v_1)\right)^{\frac{1}{d}} = \gamma^d |\Delta_F|^{\frac{1}{2d}}H(v_1)^{\frac{1}{d}}\\
        &\le \gamma^d|\Delta_F|^{\frac{1}{2d}} Q^{\frac{n-1}{d}}H(\det g)^{\frac{1}{nd}} = \gamma^dQ^{\frac{n-1}{d}} \left(\det L(g)\right)^{\frac{1}{nd}}.        
    \end{align*}
\end{enumerate}
This means that we are comparing the RHF bound $\gamma^{nd}$ for classical LLL with $\gamma^d\cdot Q^{\frac{n-1}{d}}$ for adelic LLL. We compare both factors in the case where $Q^{\frac1d}= S\cdot C^{\frac{1}{2d}}$ where $S$ is a sub-exponential term in $d$, and assuming $C=\gamma^{d^2}$. In other words, we get
\[ \gamma^d\cdot Q^{\frac{n-1}{d}} = \gamma^d\cdot S^{n-1} C^{\frac{n-1}{2d}} = S^{n-1}\cdot \gamma^d\cdot \gamma^{\frac{(n-1)d}{2}} = S^{n-1}\cdot \gamma^{\frac{(n+1)d}{2}}. \]
Here is what this would imply in our $2d\times 2d$ use case, ignoring the sub-exponential term:
\begin{itemize}
    \item (classical LLL) $\gamma^{2d}$;
    \item (Rank $2$ adelic LLL over $F$) $\gamma^{\frac{3d}{2}}$;
    \item (Rank $4$ adelic LLL over $F^+$) $\gamma^{\frac{5d}{4}}$.
\end{itemize}
In particular the ratio between the exponents (adelic/classical) is $\frac{n+1}{2n}$, which is a decreasing function of $n$ with limit $1/2$. Is this strange behaviour? No, I think it should be ok, $1/2$ cannot be reached because as $n$ becomes $\Theta(d)$, then the sub-exponential term becomes exponential.
} {\color{orange} Thanks for checking! For slide algorithm/SDBKZ instead of LLL, I guess we can just plug in $\gamma = \beta^{1/(2\beta)}$? which again roughly halves the exponent of the approximation factor.
}\henry{Yes, it should work for any reduction algorithm. Actually we should look at two cases now for rank 2 module HSVP: \begin{enumerate}
    \item If we are allowed poly-time algorithms. Then we run LLL for diophantine approx, and can use a quantum poly-time PIP solver for the idealSVP step. This gives a poly-time (quantum+maybe this step is heuristic and gives $\gamma^{\frac{3d}{4}}$ but we can as well replace it by provable LLL to get $\gamma^{\frac{5d}{4}}$) algorithm for exponential-RHF SVP. The RHF exponent is improved from LLL's $2d$ to $5d/4$ (resp. $3d/4$ quantum/heuristic?)
    \item If we are allowed subexp-time algorithms. Then we run $O(\sqrt{d}$)-Slide for both steps, then we get the same kind of improvement.
\end{enumerate}
}

{\color{orange}
On the RHF comparison discussion we discussed on Zoom (5/20): When $d=1$, we can take $Q$ as in Prop. \ref{prop:rhf_bound_n} to be simply $(\delta^2-\mu^2)^{-1/4}$; this coincides with the RHF of classical LLL! So the comparison is consistent.

We also remarked, as $d$ is fixed and $n\rightarrow\infty$ the RHF advantage doesn't appear --- on second thought, I'm actually comfortable to see that happening. The fact that $C$ is square rooted in the formula of Prop. \ref{prop:rhf_bound} is where the RHF saving is coming from; and it's square rooted because we're respecting the module structure. So things make better sense to me now.

And yes we're toast if we resort to the Minkowski bound on $B$. As I tried to say in the call this really reflects our lack of understanding of the geometry of $F$.
}
}

%%%% BIB

\newpage

\appendix

\section*{Supplementary materials}

\section{Proof of statements regarding rank $n$ adelic LLL}
\label{app:proof}

\begin{proof}[Proposition \ref{prop:swap_bound_n}]
Observe $H(v_1\wedge\cdots\wedge v_h) = H(a_1)\cdots H(a_h)$.
When $v_k$ and $v_{k+1}$ are swapped, $H(v_1\wedge\cdots\wedge v_h)$ changes only if $h=k$; and in this case, it changes to
\begin{align*}
H(a_1)\cdots H(a_{k-1}) H((m_{k+1,k}a_{k},a_{k+1})).
\end{align*}
Since the Lov\'asz condition failed, we have
\begin{align*}
H((m_{k+1,k}a_{k},a_{k+1})) < \delta H(a_{k}),
\end{align*}
which causes $\log E(g)$ to decrease by at least $-\log \delta$. But by assumption $\log E(g)$ cannot be smaller than $\log \kappa$. This completes the proof.
\qed
\end{proof}

\begin{proof}[Proposition \ref{prop:rhf_bound_n}]
By applying Proposition \ref{prop:rhf_bound} to $2 \times 2$ diagonal blocks of the $ua$ part of $g$, we have
\begin{align*}
H(a_i)^\frac{1}{2} \leq QH(a_{i+1})^\frac{1}{2}
\end{align*}
for every $i$, and consequently
\begin{align*}
H(a_i)^\frac{1}{2} \leq Q^jH(a_{i+j})^\frac{1}{2}
\end{align*}
for every $j$ as well. Thus
\begin{align*}
&H(a_1)^\frac{n-1}{2} \leq Q^{1+\ldots+(n-1)}H(a_2)^\frac{1}{2}\cdots H(a_n)^\frac{1}{2} \\
&\Rightarrow H(a_1)^\frac{n}{2} \leq Q^\frac{(n-1)n}{2}H(\det g)^\frac{1}{2} \\
&\Rightarrow H(a_1) \leq Q^{n-1}H(\det g)^\frac{1}{n}
\end{align*}
as desired. The final inequality follows from Lemma~\ref{lemma:diagonal}.
\qed    
\end{proof}

\ignore{
\section*{Appendix: Class Group Computation via $q$-Descent}

Let $K$ be a number field of degree $n$ and discriminant $\Delta_K$, with ring of integers $\mathcal O_K$.
We describe a subexponential algorithm to compute the class group $\mathrm{Cl}_K$ using
$q$-descent: we collect relations among prime ideals up to a smoothness bound
$B=L_{|\Delta_K|}(1/3,c_B)$, and recover $\mathrm{Cl}_K$ from the Smith Normal Form (SNF) of the relation matrix.
Throughout we rely on the standard smoothness heuristic for ideals and effective prime-ideal distribution (e.g.\ Chebotarev under GRH).

\subsection*{Smoothness bound and factor base}

Fix a smoothness bound
\[
B \;=\; L_{|\Delta_K|}\!\left(\tfrac{1}{3},\,c_B\right)
=\exp\!\Bigl((c_B+o(1))\,(\log|\Delta_K|)^{1/3}(\log\log|\Delta_K|)^{2/3}\Bigr),
\]
and define the \emph{factor base}
\[
\mathcal B \;=\; \{\,\mathfrak p \text{ prime ideal of }\mathcal O_K \;:\; N\mathfrak p \le B\,\}.
\]
Heuristically, $|\mathcal B| \approx B/\log B$ (up to constant factors depending mildly on $K$). {\color{orange} I think this is a theorem --- ``prime ideal theorem''}

\subsection*{BKZ parameter and short generators}

Let $\iota:K\hookrightarrow \mathbb R^{r_1}\times\mathbb C^{r_2}\cong\mathbb R^{n}$ be the Minkowski embedding and
$\Lambda(\mathfrak a)=\iota(\mathfrak a)$ the ideal lattice for an (integral or fractional) ideal $\mathfrak a$.
Its determinant satisfies
\[
\det \Lambda(\mathfrak a)=2^{-r_2}\sqrt{|\Delta_K|}\,N(\mathfrak a).
\]
Applying BKZ with block size $\beta$ (GSA heuristic) yields a short vector $v=\iota(\phi)$ with {\color{orange} here and below, what is $\phi$?}
\[
\|v\|_2 \;\le\; \delta_0(\beta)^{\,n-1}\,(\det\Lambda(\mathfrak a))^{1/n},
\qquad
\log\delta_0(\beta)\ \approx\ \frac{\log\beta-\log(2\pi e)}{2(\beta-1)}.
\]
Passing to algebraic norms gives the surrogate
\begin{equation}\label{eq:phiNorm}
\log N(\phi)\;\lesssim\; {\log N(\mathfrak a)+\tfrac12\log|\Delta_K|}
\;+\; n(n\!-\!1)\,\log\delta_0(\beta)\;+\;O(n).
\end{equation}

\paragraph{Choice of $\beta$ in the $q$-descent regime.}
To keep the Dickman parameter
\[
u \;=\; \frac{\log N(\phi)}{\log B}
\]
bounded by a small constant (so that the $B$-smoothness probability $\rho(u)$ stays non-negligible),
we \emph{scale the BKZ block size with the discriminant}:
\[
\boxed{\ \ \beta\;=\;\left\lceil c_\beta\,(\log|\Delta_K|)^{1/3}\right\rceil\ \ }.
\]
With $B=L(1/3)$ and $\beta\asymp(\log|\Delta_K|)^{1/3}$, \eqref{eq:phiNorm} yields $u=O(1)$; consequently
both the BKZ cost and the smoothness search cost are $L_{|\Delta_K|}(1/3)$, and the overall complexity remains $L(1/3)$.
(When $q$-descent is \emph{not} used, one typically works with $B=L(2/3,\cdot)$ and either fixes a moderate
$\beta$ or tunes it by minimizing the end-to-end cost; that leads back to the classical $L(2/3)$ regime.)

\subsection*{Smoothness heuristic for ideals}

We call an ideal $y$-smooth if all its prime-ideal factors have norm $\le y$.
If $X=\mathrm N((\phi))$ and $y=B$, we model
\[
\Pr[(\phi)\ \text{is }B\text{-smooth}]
\;\approx\;
\rho\!\left(\frac{\log X}{\log y}\right),
\]
where $\rho$ is the Dickman--de Bruijn function:
$\rho(u)=1$ for $0\le u\le 1$, and $u\,\rho'(u)+\rho(u-1)=0$ for $u>1$; asymptotically,
$\rho(u)=\exp(-u\log u\,(1+o(1)))$.

\subsection*{Why the Dickman parameter $u$ stays $O(1)$ in $q$-descent}

In each $q$-descent step we take a large prime ideal $\mathfrak q$ and search for a short
$\varphi\in\mathfrak q$ so that
\[
\frac{(\varphi)}{\mathfrak q} \;=\; \prod_j \mathfrak q'_j{}^{\,e'_j}
\quad\text{has all prime factors \emph{much} smaller than } \mathfrak q.
\]
The smoothness test is applied to the \emph{quotient ideal} $(\varphi)/\mathfrak q$, so the relevant
quantity in the Dickman parameter is
\[
X \;:=\; N\!\Bigl(\frac{(\varphi)}{\mathfrak q}\Bigr) \;=\; \frac{N(\varphi)}{N(\mathfrak q)}.
\]
We will show that, with the $q$-descent parameter choices, $\log X$ is on the same $L(1/3)$ scale
as $\log B$, hence
\[
u \;=\; \frac{\log X}{\log B} \;=\; O(1).
\]

\paragraph{BKZ bound on a low-dimensional sublattice.}
Let $\iota:K\hookrightarrow\R^{r_1}\times\C^{r_2}\cong\R^{n}$ be the Minkowski embedding and
$\Lambda(\mathfrak q)=\iota(\mathfrak q)$.
Instead of reducing the full $n$-dimensional lattice, the $q$-descent works inside a
$(k\!+\!1)$-dimensional sublattice of $\Lambda(\mathfrak q)$ (obtained by projection/twisting),
with $k\approx\beta$.
Under the GSA heuristic for BKZ-$\beta$ on this sublattice we obtain a short vector $v=\iota(\varphi)$
such that
\[
\|v\|_2 \ \le\ \delta_0(\beta)^{\,k}\,(\det \Lambda_k)^{1/k},
\qquad
\log\delta_0(\beta)\ \approx\ \frac{\log\beta-\log(2\pi e)}{2(\beta-1)},
\]
where $\Lambda_k$ denotes the $(k\!+\!1)$-dimensional working sublattice.
Comparing the Euclidean bound with the embeddings and using the
\emph{quotient} $(\varphi)/\mathfrak q$ cancels $N(\mathfrak q)$ and yields the surrogate
\begin{equation}\label{eq:q-quotient}
\log N\!\Bigl(\tfrac{(\varphi)}{\mathfrak q}\Bigr)
\;\lesssim\;
k(k+1)\,\log\delta_0(\beta)
\;+\;\frac{k}{n}\cdot\frac12\log|\Delta_K|
\;+\;O\!\bigl(k\log k\bigr).
\end{equation}
Intuitively: the discriminant contribution $\tfrac12\log|\Delta_K|$ is downscaled by $k/n$
(since we work in dimension $k\!+\!1$), while the BKZ contribution involves only $k$.

\paragraph{Parameter choice and the $L(1/3)$ scale.}
In the $q$-descent regime we set
\[
B \;=\; L_{|\Delta_K|}\!\left(\tfrac{1}{3},\,c_B\right),\qquad
\beta \ \asymp\ k \ \asymp\ n \ \asymp\ (\log|\Delta_K|)^{1/3}.
\]
Then
\[
\log\delta_0(\beta)
\;\approx\; \frac{\log\beta}{2(\beta-1)}
\;=\; \tilde O\!\Bigl(\frac{\log\log|\Delta_K|}{(\log|\Delta_K|)^{1/3}}\Bigr),
\]
and the first and third terms of \eqref{eq:q-quotient} contribute
\[
k(k+1)\log\delta_0(\beta) + O(k\log k)
\;=\; O\!\bigl((\log|\Delta_K|)^{1/3}(\log\log|\Delta_K|)^{2/3}\bigr)
\;=\; \log L_{|\Delta_K|}\!\left(\tfrac{1}{3},\,\tilde c\right).
\]
For the second term we have the factor $k/n\approx 1$, but in practice one descends primes
$\mathfrak q$ after mild twists so that the residual discriminant contribution is absorbed in the same
$L(1/3)$ scale (this is the standard $q$-descent balancing).
Consequently,
\[
\boxed{\ \ \log N\!\Bigl(\tfrac{(\varphi)}{\mathfrak q}\Bigr)
\;=\; O\!\bigl((\log|\Delta_K|)^{1/3}(\log\log|\Delta_K|)^{2/3}\bigr)
\;=\; \log L_{|\Delta_K|}\!\left(\tfrac{1}{3},\,\tilde c\right).\ \ }
\]
Since $\log B \asymp (\log|\Delta_K|)^{1/3}(\log\log|\Delta_K|)^{2/3}$, we obtain
\[
u \;=\; \frac{\log N((\varphi)/\mathfrak q)}{\log B} \;=\; \Theta(1),
\]
so the smoothness probability $\rho(u)$ is bounded away from $0$, and the expected trials per
successful descent step are constant up to subexponential lower-order factors.

\paragraph{Consequence for overall complexity.}
With $u=\Theta(1)$ at each descent level, $B=L(1/3)$ and $\beta\asymp(\log|\Delta|)^{1/3}$,
both the BKZ calls (in dimension $\approx k$) and the smoothness hunting run in
$L_{|\Delta|}(1/3)$ time; the depth of the descent is logarithmic, hence relation collection and
the final SNF phase yield an overall $L_{|\Delta|}(1/3)$ algorithm under the standard heuristics.

\subsection*{High-level plan of $q$-descent}

Given an ideal $\mathfrak a$, we first obtain a coarse factorization
\[
\mathfrak a \;=\; (\phi_0)\,\prod_i \mathfrak q_i^{e_i},
\qquad N\mathfrak q_i\ \text{``moderately small''},
\]
via a single BKZ call (on $\Lambda(\mathfrak a)$) and principal ideal factorization.
Then, for each remaining prime ideal $\mathfrak q_i$ with $N\mathfrak q_i>B$, we \emph{descend} it:
we find a short element $\varphi\in \mathfrak q_i$ so that $(\varphi)/\mathfrak q_i$ factors over primes of much smaller norms.
Repeating this reduces all prime norms below $B$, yielding a $\mathcal B$-smooth relation.

\subsection*{Algorithms}

\begin{algorithm}[H]
\caption{InitialCoarseFactor\texttt{($\mathfrak a$, $\beta$)}}
\begin{algorithmic}[1]
\Require Ideal $\mathfrak a$, BKZ block size $\beta=\lceil c_\beta(\log|\Delta_K|)^{1/3}\rceil$
\Ensure Coarse factorization $\mathfrak a=(\phi_0)\prod_i \mathfrak q_i^{e_i}$ with $N\mathfrak q_i$ not too large
\State Build a $\mathbb Z$-basis of $\Lambda(\mathfrak a)=\iota(\mathfrak a)$
\State Run BKZ-$\beta$ to obtain a reduced basis $(b_1,\dots,b_n)$ and lift $b_1$ to $\phi_0\in\mathfrak a$
\State {\color{red}Factor $(\phi_0)$ into prime ideals: $(\phi_0)=\prod_i \mathfrak q_i^{e_i}$}
\State \Return $(\phi_0,\{(\mathfrak q_i,e_i)\}_i)$
\end{algorithmic}
\end{algorithm}

\begin{algorithm}[H]
\caption{DescendPrime\texttt{($\mathfrak q$, $\beta$, $B$)}}
\begin{algorithmic}[1]
\Require A prime ideal $\mathfrak q$, BKZ block size $\beta$, target smoothness bound $B=L(1/3,c_B)$
\Ensure A set $\{(\mathfrak q'_j,e'_j)\}_j$ and $\varphi\in\mathfrak q$ such that $(\varphi)/\mathfrak q=\prod_j (\mathfrak q'_j)^{e'_j}$ with $N\mathfrak q'_j$ significantly smaller than $N\mathfrak q$
\State Build a basis of $\Lambda(\mathfrak q)=\iota(\mathfrak q)$ (or a lightly twisted variant $\mathfrak q\cdot \prod \mathfrak p_\ell^{\pm 1}$)
\State Run BKZ-$\beta$ and lift a short combination of the first few vectors to $\varphi\in\mathfrak q$
\State Factor $(\varphi)/\mathfrak q$; if some prime norms are still too large, optionally repeat with a fresh twist
\State \Return $\bigl(\varphi,\{(\mathfrak q'_j,e'_j)\}_j\bigr)$
\end{algorithmic}
\end{algorithm}

\begin{algorithm}[H]
\caption{RelationCollection-$q$Descent\texttt{($\mathcal B$, $t$)}}
\begin{algorithmic}[1]
\Require Factor base $\mathcal B=\{\mathfrak p: N\mathfrak p\le B\}$, target number of relations $t$
\State $\beta\gets \lceil c_\beta(\log|\Delta_K|)^{1/3}\rceil$;\quad $M\gets$ empty relation matrix
\While{\text{rows}$(M)<t$}
  \State Choose a probing ideal $\mathfrak a$ (e.g.\ $\mathcal O_K$ or a small random twist)
  \State $(\phi,\{(\mathfrak q_i,e_i)\}_i)\gets$ \textsc{InitialCoarseFactor}$(\mathfrak a,\beta)$
  \ForAll{$i$ with $N\mathfrak q_i>B$}
    \State $(\varphi,\{(\mathfrak q'_{ij},e'_{ij})\}_j)\gets$ \textsc{DescendPrime}$(\mathfrak q_i,\beta,B)$
    \State Update: $\phi\gets \phi\cdot \varphi$,\quad replace $\mathfrak q_i^{e_i}$ by $\prod_j (\mathfrak q'_{ij})^{e'_{ij}}$
  \EndFor
  \If{all prime factors now lie in $\mathcal B$}
     \State Record the exponent vector of $(\phi)=\prod_{\mathfrak p\in\mathcal B}\mathfrak p^{e(\mathfrak p)}$ as a row of $M$
  \EndIf
\EndWhile
\State \Return $M$
\end{algorithmic}
\end{algorithm}

\subsection*{From relations to the class group}

Let $M\in\mathbb Z^{r\times m}$ be the relation matrix whose columns correspond to $\mathfrak p\in\mathcal B$ and whose rows are exponent vectors.
Compute an SNF $UMV=D=\mathrm{diag}(d_1,\dots,d_s,0,\dots,0)$ with $d_i\mid d_{i+1}$.
Then
\[
\mathrm{Cl}_K \;\simeq\; \mathbb Z/d_1\mathbb Z \;\oplus\; \cdots \;\oplus\; \mathbb Z/d_s\mathbb Z,
\]
and (if desired) units can be recovered from the kernel structure on the Archimedean side.

\subsection*{Heuristics and complexity}

\begin{itemize}
  \item \textbf{Smoothness.} For $u=\log N(\phi)/\log B$, the hit probability is $\rho(u)\approx e^{-u\log u}$.
        With $B=L(1/3)$ and $\beta\asymp(\log|\Delta_K|)^{1/3}$, \eqref{eq:phiNorm} yields $u=O(1)$, hence a non-negligible hit rate.
  \item \textbf{Descent depth.} Each $q$-descent step reduces the norms of prime factors substantially; a logarithmic number of steps suffices to bring all norms $\le B$.
  \item \textbf{Running time.} Under GRH and the smoothness heuristic, the overall cost of relation collection (and SNF on a matrix of size $\asymp |\mathcal B|$) is
        \[
        L_{|\Delta_K|}\!\bigl(1/3,\,c+o(1)\bigr).
        \]
\end{itemize}

%\subsection*{Practical notes}

% \begin{itemize}
%   \item \emph{Tuning.} If the empirical hit rate is low, increase $c_\beta$ slightly (thus $\beta$) or increase $c_B$ (thus $B$).
%   \item \emph{Lifts.} In \textsc{InitialCoarseFactor} and \textsc{DescendPrime}, it is often better to try short \emph{combinations} of the first few BKZ vectors rather than only $b_1$.
%   \item \emph{Twists.} For stubborn primes $\mathfrak q$, multiplying $\mathfrak q$ by a few small primes and descending the twist can noticeably improve the output.
% \end{itemize}

\subsection*{Quantum time complexity.}
Assuming GRH (so that a polylogarithmic-size generating set of prime ideals exists) and that reduced-ideal arithmetic
(multiply/reduce/invert) is available in time polynomial in $\log|\Delta_K|$, the class group can be computed in
\[
T(n,\Delta_K)\;=\;\mathrm{poly}\!\bigl(n,\ \log|\Delta_K|\bigr)
\;=\;(\log|\Delta_K|)^{O(1)}\cdot \mathrm{poly}(n).
\]
}

\section{A correction to Algorithm 3.3 in \cite{LPSW19}}\label{app:correct}

In this section, we follow the notation of \cite{LPSW19}.

\begin{definition}[Rounding with respect to a fractional ideal]
Let \(J \subset K\) be a nonzero fractional ideal, viewed as a lattice in \(K_{\mathbb R}\) via the canonical embedding.  
For any \(y \in K_{\mathbb R}\), we define
\[
\textsf{round}_J(y) \in J
\]
to be an element obtained by applying Babai's nearest-plane algorithm to \(y\) with respect to a chosen LLL-reduced \(\mathbb Z\)-basis of \(J\).

In particular, for indices \(i<j\), we define the size reduction coefficient by
\[
x_{ij}
:=
\textsf{round}_{I_i I_j^{-1}}\!\left(\frac{r_{ij}}{r_{ii}}\right)
\in I_i I_j^{-1}.
\]
\end{definition}

\begin{lemma}[Module preservation under ideal-compatible column reduction]
Let
\[
\mathcal B=\bigl((I_1,b_1),\dots,(I_n,b_n)\bigr)
\]
be a pseudo-basis of an \(R\)-module \(M \subset K_{\mathbb R}^m\).  
Fix distinct indices \(i< j\), and let \(x \in I_i I_j^{-1}\). Define a new family
\[
\mathcal B'
=
\bigl((I_1,b_1),\dots,(I_j,b_j-xb_i),\dots,(I_n,b_n)\bigr),
\]
where all pairs except the \(j\)-th one remain unchanged.

Then \(\mathcal B'\) is again a pseudo-basis of the same module \(M\). Equivalently,
\[
\sum_{k=1}^n I_k b_k
=
\sum_{k\neq j} I_k b_k + I_j(b_j-xb_i).
\]
\end{lemma}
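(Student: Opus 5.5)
We show the two inclusions between $M=\sum_k I_k b_k$ and $M'=\sum_{k\neq j} I_k b_k + I_j(b_j-xb_i)$. The only fact used is that $x\in I_iI_j^{-1}$ gives
\[
xI_j\subseteq I_iI_j^{-1}I_j = I_i .
\]

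First, $M'\subseteq M$. Every generator $I_kb_k$ with $k\neq j$ already lies in $M$. For $y\in I_j$ we have $y(b_j-xb_i)=yb_j-(yx)b_i$. Here $yb_j\in I_jb_j$, and $yx\in xI_j\subseteq I_i$, so $(yx)b_i\in I_ib_i$. Hence $y(b_j-xb_i)\in M$.

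Conversely, $M\subseteq M'$. It suffices to show $I_jb_j\subseteq M'$. For $y\in I_j$, write $yb_j=y(b_j-xb_i)+(yx)b_i$. The first term lies in $I_j(b_j-xb_i)$. The second lies in $I_ib_i$, since $yx\in I_i$ as above. Because $i\neq j$, the summand $I_ib_i$ is unchanged in $M'$, so $yb_j\in M'$. This proves $M=M'$.

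It remains to check that $\mathcal B'$ is a pseudo-basis, i.e.\ that the sum is direct, or equivalently that $b_1,\dots,b_j-xb_i,\dots,b_n$ are $K$-linearly independent. The change of vectors is given by a unipotent elementary matrix, which has determinant $1$ and is invertible over $K$, so linear independence is preserved. Together with $\sum I_k b_k'=M$, this shows $\mathcal B'$ is a pseudo-basis of $M$.

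There is no real obstacle. The only point needing care is the ideal inclusion $xI_j\subseteq I_i$, which is exactly why $x$ must be taken in $I_iI_j^{-1}$ rather than in $R$. Rounding in $R$ fails, because $R\cdot I_j\not\subseteq I_i$ in general; this is the error being corrected.
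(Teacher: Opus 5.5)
Your proof is correct and is essentially the paper's argument: both inclusions follow from the single fact $xI_j\subseteq I_i$, and the decomposition $b_j=(b_j-xb_i)+xb_i$ gives $M\subseteq M'$. You also check that the new vectors stay independent under the unipotent change of basis, which the paper leaves implicit; this is what makes $\mathcal B'$ a pseudo-basis rather than just a generating family, and the same argument works over $K_{\mathbb R}$, the relevant scalars since $b_k\in K_{\mathbb R}^m$.
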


\begin{proof}
Let
\[
M=\sum_{k=1}^n I_k b_k
\qquad\text{and}\qquad
M'=\sum_{k\neq j} I_k b_k + I_j(b_j-xb_i).
\]
Since \(x\in I_i I_j^{-1}\), we have
\[
xI_j \subseteq I_i.
\]
Therefore,
\[
I_j(b_j-xb_i)
\subseteq
I_j b_j + I_i b_i
\subseteq M,
\]
which implies \(M' \subseteq M\).

Conversely, from
\[
b_j=(b_j-xb_i)+xb_i,
\]
we obtain
\[
I_j b_j
\subseteq
I_j(b_j-xb_i)+xI_j b_i.
\]
Using again the inclusion \(xI_j \subseteq I_i\), we get
\(
xI_j b_i \subseteq I_i b_i.
\)
Hence
\[
I_j b_j
\subseteq
I_j(b_j-xb_i)+I_i b_i
\subseteq M'.
\]
Since \(I_k b_k \subseteq M'\) for all \(k\neq j\), it follows that \(M \subseteq M'\).

Therefore \(M=M'\), as claimed.
\qed
\end{proof}

\begin{corollary}
With the above definition of rounding, if one sets
\[
x_{ij}
=
\textsf{round}_{I_i I_j^{-1}}\!\left(\frac{r_{ij}}{r_{ii}}\right),
\]
then the update
\[
b_j \leftarrow b_j - x_{ij} b_i
\]
preserves the module spanned by the pseudo-basis.
\end{corollary}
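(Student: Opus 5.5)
The statement to prove is the Corollary: updating $b_j \leftarrow b_j - x_{ij} b_i$ with $x_{ij} = \textsf{round}_{I_iI_j^{-1}}(r_{ij}/r_{ii})$ preserves the module $M=\sum_k I_k b_k$. The plan is to reduce it immediately to the preceding Lemma on module preservation under ideal-compatible column reduction. That Lemma applies to any $x \in I_iI_j^{-1}$, so the only thing to establish is that $x_{ij}$ lies in $I_iI_j^{-1}$.

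First I will check that $J = I_iI_j^{-1}$ is a nonzero fractional ideal. It is a product of nonzero fractional ideals, and the nonzero fractional ideals of $K$ form a group. Hence $J$ embeds as a full-rank lattice in $K_{\mathbb R}$ under the canonical embedding, as the Definition requires.

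Second, I will note that $\textsf{round}_J(y)$ is by definition the output of Babai's nearest-plane algorithm on a $\mathbb Z$-basis of $J$. Nearest-plane returns an integer combination of the basis vectors, so its output lies in $\rho(J)$. Taking its preimage gives $x_{ij} \in J = I_iI_j^{-1}$. This holds regardless of the value of the target $r_{ij}/r_{ii}$; the choice of target only affects the size of the new coefficient, not membership.

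Third, with $x = x_{ij}$, the hypotheses of the Lemma are satisfied: $\mathcal B$ is a pseudo-basis, $i<j$, and $x \in I_iI_j^{-1}$. The Lemma then gives that $\mathcal B'$, obtained by replacing $b_j$ with $b_j - x_{ij}b_i$ and keeping the ideal $I_j$, is a pseudo-basis of the same $M$. If the update is iterated over several pairs $(i,j)$, as in Algorithm 3.3, I will apply the Lemma at each step to the current pseudo-basis, whose ideals are unchanged, and conclude by induction.

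None of these steps is a real obstacle. The one point needing care is that membership $x_{ij}\in I_iI_j^{-1}$ comes from rounding with respect to the lattice $J$ itself. Rounding with respect to $\mathcal O_K$ would not give this, and that is precisely the error being corrected.
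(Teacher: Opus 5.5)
Your proposal is correct and matches the paper's reasoning. The paper states the corollary without a separate proof, as an immediate consequence of the preceding Lemma: the only additional input is that $\textsf{round}_{I_iI_j^{-1}}$ returns an element of $I_iI_j^{-1}$ by construction, because Babai's nearest-plane output is an integer combination of a $\mathbb{Z}$-basis of that ideal.
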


\ignore{\color{orange} questions:

1. how do you measure the bit-length (of an element of $R$)?

2. according to this, after scaling+size reducing, the bitsize can increase?

3. how does the size of $I_i$ affect the size of $b_j$'s?

4. what is $\|r_{ii}\|$ exactly?
}

To justify the modified size reduction step algorithmically, we fix the
bit-length convention and the norm on \(K_{\mathbb R}\) used throughout this appendix.

We assume that the number field \(K\) is given by a defining polynomial
\(f\in \mathbb Q[X]\), so that \(K\simeq \mathbb Q[X]/(f)\), and that elements of \(K\)
are represented by polynomials of degree \(<d\) with rational coefficients.
The bit-length of an element of \(K\) is defined to be the total bit-length of the
numerators and denominators of these coefficients written in lowest terms.
We also assume that a fixed \(\mathbb Z\)-basis \(B=(w_1,\dots,w_d)\) of the ring of
integers \(R\) is given.

Archimedean quantities in \(K_{\mathbb R}\) are not assigned a bit-length directly.
Instead, when such quantities arise from exact elements of \(K\), their size is
controlled by the bit-length of the exact element together with Archimedean norm
bounds. For \(x=(x_\sigma)_\sigma\in K_{\mathbb R}\), we denote by \(\|x\|\) the
Euclidean norm induced by the canonical embedding, namely
\[
  \|x\|^2
  =
  \sum_{\sigma:K\hookrightarrow\mathbb C} |x_\sigma|^2,
\]
where the sum ranges over all \(d\) embeddings, so each conjugate pair of complex
embeddings is counted twice. If \(x\in K\), then \(x_\sigma=\sigma(x)\).

We use the absolute norm \(N(\cdot)\) on nonzero fractional ideals, extended
multiplicatively from integral ideals. In particular, if \(A\subseteq B\) are
nonzero fractional ideals, then \(N(A)\ge N(B)\). We shall also use the following
notation for Archimedean scalings of fractional ideals: for
\(a=(a_\sigma)_\sigma\in K_{\mathbb R}^{\times}\) and a nonzero fractional ideal
\(I\), set
\[
  N(aI)
  :=
  N(I)\prod_{\sigma:K\hookrightarrow\mathbb C}|a_\sigma|.
\]
Thus, in particular, \(N(r_{ii}I_i)\) denotes this quantity when \(r_{ii}\) is a
nonzero Gram--Schmidt diagonal coefficient.

With this notation, we define a scaled pseudo-basis as follows.

\begin{definition}
A pseudo-basis \(\bigl((I_i,b_i)\bigr)_{i\le n}\), with \(I_i\subset K\) and
\(b_i\in K^m\), viewed in \(K_{\mathbb R}^m\) via the canonical embedding,
is said to be \emph{scaled} if
% A pseudo-basis \(\bigl((I_i,b_i)\bigr)_{i\le n}\), with \(I_i\subset K\) and
% \(b_i\in K_{\mathbb R}^m\) for all \(i\le n\), is said to be \emph{scaled} if,
for all \(i\le n\),
\[
R\subseteq I_i,\qquad
N(I_i)\ge 2^{-d^2}\Delta_K^{-1/2},
\qquad\text{and}\qquad
\|r_{ii}\|\le 2^d \Delta_K^{1/(2d)} N(r_{ii}I_i)^{1/d}.
\]
\end{definition}

\begin{lemma}
Let \(\mathcal B=((I_i,b_i))_{i\le n}\) be a scaled pseudo-basis of a module
\(M\subset K^m\). 
For each pair \(i<j\), set \(J_{ij}:=I_iI_j^{-1}\), and let
\[
x_{ij}
:=
\textsf{round}_{J_{ij}}\!\left(\frac{r_{ij}}{r_{ii}}\right)\in J_{ij}
\]
be obtained by Babai's nearest-plane algorithm with respect to an
LLL-reduced \(\mathbb Z\)-basis of \(J_{ij}\).
Update
\[
b_j \leftarrow b_j - x_{ij} b_i
\]
for \(j=2,\dots,n\), and for each fixed \(j\), for
\(i=j-1,j-2,\dots,1\). 

Then the bit-lengths of all vectors \(b_j\) remain bounded by
\[
\operatorname{poly}(L,n,d,\log\Delta_K).
\]
% Then the bit-lengths of all vectors \(b_j\) remain bounded by a polynomial
% in the input bit-length, \(n\), and \(\log \Delta_K\).
\end{lemma}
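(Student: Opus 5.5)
The plan is to follow the structure of the bit-length analysis in \cite[Lemma 3.7 and Theorem 3.8]{LPSW19}, replacing every rounding with respect to $R$ by rounding with respect to $J_{ij}=I_iI_j^{-1}$. The argument has two parts. First we bound the Archimedean size of the reduced Gram--Schmidt coefficients $r_{ij}$ and of the vectors $b_j$, which stay in $K^m$. Then we bound the size of their rational coordinates in the basis $B$, which requires a denominator bound.

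\textbf{Step 1: a rounding error bound for $J_{ij}$.} Since $R\subseteq I_i$ and $R\subseteq I_j$, we have $I_j^{-1}\supseteq J_{ij}\supseteq I_j^{-1}$ up to the factor $I_i$. More precisely, $R\subseteq I_i$ gives $I_j^{-1}\subseteq J_{ij}$. Also $I_j^{-1}\subseteq R$, and $I_i\subseteq N(I_i)^{-1}R$ because $N(I_i)I_i^{-1}\subseteq R$. Hence $J_{ij}$ is sandwiched between two lattices whose determinants are controlled by $N(I_i),N(I_j)\in[2^{-d^2}\Delta_K^{-1/2},1]$. Because the $\mathbb Z$-basis of $J_{ij}$ is LLL-reduced, Babai's algorithm gives
\[
\Bigl\|\tfrac{r_{ij}}{r_{ii}}-x_{ij}\Bigr\| \le 2^{O(d)}\lambda_d(J_{ij}) \le 2^{O(d)}\sqrt d\,\Delta_K^{O(1/d)}N(J_{ij})^{O(1/d)}.
\]
Using the scaling bounds, the right-hand side is $2^{O(d)}\Delta_K^{O(1)}$, which is independent of the input size.

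\textbf{Step 2: Archimedean size of the updated coefficients.} The update $b_j\leftarrow b_j-x_{ij}b_i$ replaces $r_{kj}$ by $r_{kj}-x_{ij}r_{ki}$ for $k\le i$ and leaves the $r_{kk}$ unchanged. We process $i=j-1,\dots,1$ in the standard LLL order. After step $i$, we get $|(r_{ij})_\sigma|\le \|r_{ii}\|\cdot 2^{O(d)}\Delta_K^{O(1)}$. Before step $i$, the coefficients $r_{kj}$ for $k<i$ have grown by at most a factor $(1+\|x_{ij}\|\,\|r_{ki}\|/\|r_{kk}\|)$, as in the classical analysis. Here the scaling condition $\|r_{ii}\|\le 2^d\Delta_K^{1/(2d)}N(r_{ii}I_i)^{1/d}$ is essential: it converts $\|r_{ii}\|$ into heights $H(a_i)^{1/d}$, and these are bounded by input quantities, just as in the proof of Theorem~\ref{thm:main}. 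Consequently every $\|r_{kj}\|$, every $\|x_{ij}\|$, and finally $\|b_j\|=\bigl(\sum_k\|r_{kj}\|^2\bigr)^{1/2}$ are bounded by $2^{\mathrm{poly}(L,n,d,\log\Delta_K)}$.

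\textbf{Step 3: from norms to bit-lengths.} This is the main obstacle, since the $x_{ij}$ now have denominators. We have $x_{ij}\in J_{ij}\subseteq I_j^{-1}\subseteq R$, because $I_i\subseteq$ ... no; more carefully, $J_{ij}\subseteq N(I_j)^{-1}$-scaled $R$ is not available. Instead we use $I_j\supseteq R$, which gives $J_{ij}\subseteq I_i$, and $I_i^{-1}\subseteq R$, which gives $N(I_i^{-1})\,I_i\subseteq R$. Therefore $D\,x_{ij}\in R$ with $D=N(I_i^{-1})\le 2^{d^2}\Delta_K^{1/2}$. The vectors $b_i$ lie in $D'R^m$ for a denominator $D'$ bounded by the input. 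The new $b_j$ therefore has a common denominator bounded by $D'D$. This bound does not accumulate: all $x_{ij}$ share the denominator $\prod_i N(I_i^{-1})$, which is fixed across the loop, so the total denominator is at most $2^{nd^2}\Delta_K^{n/2}$ times the input denominator. An element of $R$ with Euclidean norm $X$ has coordinates in $B$ bounded by $X\cdot\|B^{-1}\|$, and since $B$ is LLL-reduced, $\log\|B^{-1}\|$ is $\mathrm{poly}(d,\log\Delta_K)$. Combining this with Step 2 gives the claimed $\mathrm{poly}(L,n,d,\log\Delta_K)$ bound on bit-lengths. The delicate point to verify is the non-accumulation of denominators in Step 3, together with the growth factor in Step 2. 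For the latter I would first try to reuse verbatim the inductive inequality from the proof of \cite[Lemma 3.7]{LPSW19}, inserting the constant from Step 1 in place of theirs.
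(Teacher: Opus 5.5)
Your Steps 1--2 follow the paper's proof. The paper bounds the Babai error in $J_{ij}$ by a constant $C_K$ with $\log C_K=\mathrm{poly}(d,\log\Delta_K)$, runs the classical inductive growth estimate for the Gram--Schmidt coefficients, and uses that $r_{ii}$ and $N(r_{ii}I_i)$ are untouched by size reduction. The only difference is how $C_K$ is obtained. The paper uses $J_{ij}\subseteq I_i$ to get $\|u\|\ge \sqrt d\,2^{-d}\Delta_K^{-1/(2d)}$ for nonzero $u\in J_{ij}$, and combines this with the LLL product bound $\prod_t\|u_{ij,t}\|\le 2^{d(d-1)/2}\Delta_K^{1/2}N(J_{ij})$. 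You instead bound $\lambda_d(J_{ij})$ from above. Your Step 3 goes beyond the paper, whose proof stops at a bound on $\log\|b_j\|$. Controlling denominators is what turns that norm bound into a bound on the bit-length of the rational coordinates, so it is a worthwhile addition.

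Several steps need repair, though none is fatal.

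\emph{Step 1.} The inclusion is inverted. Since $I_i^{-1}$ is integral, $N(I_i^{-1})\in I_i^{-1}$, hence $N(I_i^{-1})\,I_i\subseteq R$, as you correctly write in Step 3; the claim $N(I_i)I_i^{-1}\subseteq R$ is wrong. Also, $\lambda_d(J_{ij})\le 2^{O(d)}\sqrt d\,\Delta_K^{O(1/d)}N(J_{ij})^{O(1/d)}$ does not follow from your sandwich. What the lower inclusion $I_j^{-1}\subseteq J_{ij}$ does give, using $N(I_j^{-1})R\subseteq I_j^{-1}$, is $\lambda_d(J_{ij})\le\lambda_d(I_j^{-1})\le N(I_j^{-1})\lambda_d(R)\le 2^{d^2}\Delta_K^{1/2}\lambda_d(R)$. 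That is enough for your purpose.

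\emph{Step 2.} Track $\|r_{kj}/r_{kk}\|$, the norm of the quotient in $K_{\mathbb R}$, not $\|r_{kj}\|/\|r_{kk}\|$. Also say explicitly that columns $i<j$ are already reduced, so $\|r_{ki}/r_{kk}\|\le C_K$. This gives the paper's recursion $m_j^{\mathrm{new}}\le(1+C_K)m_j^{\mathrm{old}}+C_K^2$ and hence growth by at most $(1+C_K)^n$.

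\emph{Step 3.} Your reason for non-accumulation is invalid. When $b_j$ is reduced by $x_{ij}b_i$, the vector $b_i$ has already been modified. So the net coefficient of the original $b_k$ in $b_j$ contains products such as $x_{ij}x_{ki}$, and a common denominator for the individual $x$'s says nothing about such products. The claim is still true, for either of two reasons:
\begin{itemize}
\item Products telescope: $x_{ki}x_{ij}\in J_{ki}J_{ij}=J_{kj}\subseteq I_k$, so every cumulative coefficient is cleared by $N(I_k^{-1})$.
\item More directly, $1\in R\subseteq I_j$ gives $b_j\in I_jb_j\subseteq M$, and $M$ is preserved by the module-preservation lemma. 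So the denominator of $b_j$ never exceeds that of $M$, which is at most $D'\prod_iN(I_i^{-1})$.
\end{itemize}
Even the crude accumulated bound $D'\bigl(\prod_iN(I_i^{-1})\bigr)^{n-1}$ has polynomially bounded logarithm.
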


\begin{proof}
For each pair \(i<j\), let \((u_{ij,1},\dots,u_{ij,d})\) be the chosen LLL-reduced
\(\mathbb Z\)-basis of \(J_{ij}=I_iI_j^{-1}\), and let
\[
e_{ij}
:=
\frac{r_{ij}}{r_{ii}}-x_{ij}.
\]
Since \(x_{ij}\) is obtained by Babai rounding, the remainder \(e_{ij}\) belongs to
the fundamental parallelepiped of \((u_{ij,t})_t\). Hence
\[
\|e_{ij}\|
\le \frac12 \sum_{t=1}^d \|u_{ij,t}\|
\le d\max_t \|u_{ij,t}\|.
\]
Now, because the pseudo-basis is scaled, we have \(N(I_\ell)\le 1\) and
\(N(I_\ell)\ge 2^{-d^2}\Delta_K^{-1/2}\) for every \(\ell\). Therefore
\[
N(J_{ij})=\frac{N(I_i)}{N(I_j)}\le 2^{d^2}\Delta_K^{1/2}.
\]
Since the basis \((u_{ij,1},\dots,u_{ij,d})\) of \(J_{ij}\) is LLL-reduced, we have
\[
\|u_{ij,t}\|\le 2^{\frac{d-1}{2}} \|u_{ij,t}^*\|
\qquad\text{for all }t.
\]
Taking the product over all \(t\) yields
\[
\prod_{t=1}^d \|u_{ij,t}\|
\le
2^{\frac{d(d-1)}{2}} \prod_{t=1}^d \|u_{ij,t}^*\|.
\]
Since
\[
\prod_{t=1}^d \|u_{ij,t}^*\|
=
\Delta_K^{1/2} N(J_{ij}),
\]
it follows that
\[
\prod_{t=1}^d \|u_{ij,t}\|
\le
2^{\frac{d(d-1)}{2}} \Delta_K^{1/2} N(J_{ij}).
\]
%and thus 
% \henry{Why is the following true? One could imagine that the norms are smaller than $1$}
% \changmin{Yes, you are right. So I have revised the argument as follows.}

We now bound $\max_t\|u_{ij,t}\|$. 
For this, we use the inclusion part of the scaledness assumption. Since \(R\subseteq I_j\), we have \(I_j^{-1}\subseteq R\),
and hence
\(
    J_{ij}=I_iI_j^{-1}\subseteq I_i .
\)
Thus, for every nonzero $u\in J_{ij}$, we have $(u)\subseteq I_i$, and so
\[
    |N_{K/\mathbb Q}(u)|=N((u))\ge N(I_i)
    \ge 2^{-d^2}\Delta_K^{-1/2}.
\]
By the arithmetic-geometric mean inequality applied to the canonical
embedding,
\[
    \|u\|
    \ge \sqrt d\, |N_{K/\mathbb Q}(u)|^{1/d}
    \ge \sqrt d\,2^{-d}\Delta_K^{-1/(2d)}:=\eta_K.
\]
    Then every nonzero vector $u_{ij,t} \in J_{ij}$ satisfies $\|u_{ij,t}\|\ge \eta_K$.
Hence, for every $s$,
\[
    \prod_{t=1}^d \|u_{ij,t}\|
    =
    \|u_{ij,s}\|\prod_{t\neq s}\|u_{ij,t}\|
    \ge
    \|u_{ij,s}\|\eta_K^{d-1}.
\]
Taking the maximum over $s$ gives
\[
    \max_t\|u_{ij,t}\|
    \le
    \eta_K^{-(d-1)}
    \prod_{t=1}^d\|u_{ij,t}\|.
\]
Combining this with the preceding product bound gives
\[
    \max_t\|u_{ij,t}\|
    \le
    \eta_K^{-(d-1)}
    2^{\frac{d(d-1)}{2}}\Delta_K^{1/2}N(J_{ij}).
\]
Using
\(
    N(J_{ij})=\frac{N(I_i)}{N(I_j)}
    \le 2^{d^2}\Delta_K^{1/2},
\)
we also obtain
\[
    \max_t\|u_{ij,t}\|
    \le
    \eta_K^{-(d-1)}
    2^{2d^2}\Delta_K .
\]
Therefore
\[
    \|e_{ij}\|
    \le d\,\eta_K^{-(d-1)}2^{2d^2}\Delta_K
    =: C_K .
\]
In particular,
\[
    \log C_K
    =
    O(d^2+d\log d+\log\Delta_K),
\]
and hence $\log C_K$ is polynomially bounded in $d$ and $\log\Delta_K$.

% \[
% \max_t \|u_{ij,t}\|\le \prod_{t=1}^d \|u_{ij,t}\|
% \le 2^{d^2}\Delta_K^{1/2}N(J_{ij})
% \le 2^{2d^2}\Delta_K.
% \]
% It follows that
% \[
% \|e_{ij}\| \le d\,2^{2d^2}\Delta_K=:C_K.
% \]
% In particular, \(\log C_K\) is polynomially bounded in \(\log\Delta_K\).

Fix \(j\), and during the inner loop define
\[
m_j:=\max_{i'<j}\left\|\frac{r_{i'j}}{r_{i'i'}}\right\|.
\]
Consider one update at index \(i<j\). Since
\[
x_i:=x_{ij}
=
\frac{r_{ij}}{r_{ii}}-e_{ij},
\]
we get
\[
\|x_i\|
\le
\left\|\frac{r_{ij}}{r_{ii}}\right\|+\|e_{ij}\|
\le m_j^{\mathrm{old}}+C_K.
\]
For any \(i'<i\), the update \(b_j\leftarrow b_j-x_ib_i\) transforms
\(r_{i'j}\) into
\[
r'_{i'j}=r_{i'j}-x_i r_{i'i}.
\]
Hence
\[
\left\|\frac{r'_{i'j}}{r_{i'i'}}\right\|
\le
\left\|\frac{r_{i'j}}{r_{i'i'}}\right\|
+
\|x_i\|
\left\|\frac{r_{i'i}}{r_{i'i'}}\right\|.
\]
But the \(i\)-th column has already been size reduced, so
\[
\left\|\frac{r_{i'i}}{r_{i'i'}}\right\|\le C_K.
\]
Therefore
\[
m_j^{\mathrm{new}}
\le
m_j^{\mathrm{old}}+\|x_i\|\,C_K
\le
(1+C_K)m_j^{\mathrm{old}}+C_K^2.
\]
Iterating over at most \(n\) values of \(i\), we obtain
\[
m_j
\le
(1+C_K)^n\bigl(m_j^{\mathrm{init}}+C_K\bigr).
\]
Since \(\log C_K\) is polynomially bounded in \(\log \Delta_K\), it follows that
\(\log m_j\) is polynomially bounded as soon as \(\log m_j^{\mathrm{init}}\) is.
Since the initial Gram--Schmidt coefficients are determined by the input
pseudo-basis, their logarithmic Archimedean sizes are bounded by a polynomial
in the input bit-length \(L\), \(n\), and \(d\). Hence
\[
\log m_j^{\mathrm{init}}
\le
\operatorname{poly}(L,n,d).
\]
Combining this with the preceding inequality gives
\[
\log m_j
\le
\operatorname{poly}(L,n,d,\log\Delta_K).
\]
%In particular, if the input pseudo-basis is already size-reduced, then
%\(m_j^{\mathrm{init}}\le C_K\), and therefore \(\log m_j\) is polynomially bounded
%in \(n\) and \(\log \Delta_K\).

%Thus \(\log m_j\) remains bounded by a polynomial in the input bit-length,
%\(n\), and \(\log\Delta_K\).

Finally, since
\[
\|b_j\|^2=\sum_{i\le j}\|r_{ij}\|^2,
\]
we have
\[
\|b_j\|
\le
\sqrt{n}\,\max_{i\le j}\|r_{ij}\|.
\]
For \(i<j\), by the definition of \(m_j\),
\[
\left\|\frac{r_{ij}}{r_{ii}}\right\|\le m_j,
\]
and therefore
\[
\|r_{ij}\|
=
\left\|\frac{r_{ij}}{r_{ii}}\,r_{ii}\right\|
\le
\left\|\frac{r_{ij}}{r_{ii}}\right\|\,\|r_{ii}\|
\le
m_j\,\max_{i\le j}\|r_{ii}\|.
\]
For \(i=j\), trivially
\[
\|r_{jj}\|\le \max_{i\le j}\|r_{ii}\|.
\]
Hence
\[
\max_{i\le j}\|r_{ij}\|
\le
\max_{i\le j}\|r_{ii}\|\max(1,m_j),
\]
so that
\[
\|b_j\|
\le
\sqrt{n}\,\max_{i\le j}\|r_{ii}\|\max(1,m_j).
\]

Because the pseudo-basis is scaled, each diagonal coefficient satisfies
\[
\|r_{ii}\|\le 2^d \Delta_K^{1/(2d)} N(r_{ii}I_i)^{1/d},
\]
and the quantities \(N(r_{ii}I_i)\) are unchanged throughout the algorithm.
Indeed, the ideal-compatible size reduction step modifies only the off-diagonal
coefficients and leaves both \(r_{ii}\) and \(I_i\) unchanged. On the other hand,
the scaling step replaces \(I_i\) by \(I_i\langle x_i\rangle^{-1}\) and \(b_i\) by
\(x_i b_i\), so that \(r_{ii}\) is replaced by \(x_i r_{ii}\). Hence
$(x_i r_{ii})(I_i\langle x_i\rangle^{-1}) = r_{ii}I_i,$
and therefore \(N(r_{ii}I_i)\) is invariant.
Hence \(\log \max_{i\le j} \|r_{ii}\|\) is polynomially bounded by the input
bit-length and \(\log\Delta_K\). Combining this with the bound on \(m_j\) shows that
$\log \|b_j\|$
always remains below a polynomial in the input bit-length, \(n\), and
\(\log\Delta_K\), as claimed.
\qed
\end{proof}

\begin{remark}
     The bit-size need not decrease monotonically after the scaling step or the
size reduction step. What is required is that the
bit-sizes remain bounded by a polynomial in the input bit-size, \(n\), $d$, and
\(\log \Delta_K\) throughout the algorithm.

The coefficient ideals \(I_i\) affect the vectors \(b_j\) only indirectly, through
the reduction coefficients. In the corrected size reduction step, the coefficient
\(x_{ij}\) is chosen in the fractional ideal \(I_iI_j^{-1}\). Hence the effect of the
update
\[
b_j \leftarrow b_j-x_{ij}b_i
\]
is controlled by the geometry of the lattice \(I_iI_j^{-1}\). In particular, once the
pseudo-basis is scaled, the norms \(N(I_i)\) are uniformly bounded above and below,
which yields a uniform control on \(N(I_iI_j^{-1})\), and therefore on the growth of
the vectors \(b_j\).
\end{remark}

\ignore{
\section{Some facts about real cyclotomic fields}

{\color{orange} here I have to add what $A,B,\ell$ are for $\Q(\zeta+\bar\zeta)$ where $\zeta$ is a $2^\kappa$-th root of unity.}
}

\section{Detailed comparison with the state of the art}\label{sec:comparison}

%\subsection{The Rank-2 Reduction of LPSW~\cite{LPSW19}}

%{\color{purple} Henry: Below is my attempt at comparing both ideas, please proofread and make the presentation better if you can, as I'm not very good with latex}

\ignore{
{\color{orange} Phong and Changmin, please take care of this, thanks}

- explain what heuristic LPSW uses

- also mention the claim that it can be lifted, by not being a module lattice??

{\color{orange} - also we need to decide what to do with the information that Heuristic 1 from LPSW is incorrect. Though the particular error may --- or may not --- be fixed}

We clarify here the comparison with the module reduction framework of Lee, Pellet--Mary, Stehl\'e, and Wallet~\cite{LPSW19}.
It is useful to distinguish their general reduction from the particular rank-$2$ algorithm used to instantiate it.

The original LPSW implementation requires quantum computation for general, non-free modules, while its extended version gives a classical implementation for free modules supplied with a basis.
De~Micheli et al.~\cite{DMPT} subsequently gave a reduction from arbitrary modules to free modules of the same rank, thereby extending this dequantization to arbitrary modules, at the cost of an additional approximation loss.
Since quantum computation is therefore not essential to the resulting LPSW framework, we do not treat the use of quantum computation as a distinguishing feature in the comparison below and focus instead on the approximation guarantees, heuristic assumptions, and use of module structure.

\textbf{The LPSW framework.}
The reduction of LPSW from rank-$n$ module lattices to rank-$2$ module lattices is itself unconditional.
In particular, given a $\gamma$-approximate solver for rank-$2$ module lattices, their reduction yields a corresponding approximation guarantee for arbitrary module rank.
The heuristic assumptions arise in their proposed \emph{structure-exploiting implementation} of this rank-$2$ oracle, rather than in the rank reduction itself.

\textbf{The structure-exploiting rank-$2$ solver.}
The difficult part of LPSW is therefore the reduction of rank-$2$ modules.
Their number-theoretic implementation reduces this problem to computations involving ideals, units, and a CVP instance in a field-dependent lattice.
An exact CVP solution alone, however, does not provide the desired guarantee: their analysis additionally assumes that the restricted targets arising from the algorithm lie sufficiently close to this lattice.
This is the role of LPSW's Heuristic~1.
Their implementation also relies on the heuristic ideal short-vector routine of~\cite{PHS19}.
Consequently, the strong approximation guarantee obtained from this structure-exploiting rank-$2$ procedure is heuristic.

\textbf{Unstructured reduction.}
This does not mean that the LPSW framework can only be instantiated heuristically.
A rank-$2$ module lattice over a degree-$d$ number field is, after forgetting its module structure, an ordinary $2d$-dimensional lattice.
Hence one may instantiate the rank-$2$ oracle by applying any ordinary lattice-reduction algorithm with a rigorous approximation guarantee, such as LLL or a rigorous block-reduction algorithm, to this underlying lattice.
This gives a short lattice vector, and this vector also provides a low-height rank-one submodule.
Fieker and Stehl\'e show how to transform
a reduced $\Z$-basis into a short pseudo-basis of the module lattice.

Another unstructured reduction is sketched in~\cite{LPSW19}: Kannan showed that
finding a short vector can be reduced
to deciding whether an input lattice has 
a first minimum less than 1,
which can itself be reduced to 3SAT, 
which can be reduced to CVPP where the CVPP-lattice
only depends on the size of the 3SAT instance:
the rank of the CVPP-lattice is polynomial in the size, but is much bigger than $d^2$. {\color{orange} sk: do we know the degree, or is the umbrella term ``poly time'' the best that can be said?}
This means that one can find short vectors in any lattice using a  CVPP oracle with respect to a fixed lattice of (huge) polynomial dimension.

The resulting algorithm is heuristic-free.

{\color{orange} yes but this isn't really a structure preserving reduction. But I guess this is Leo Ducas's understanding of module LLL. How do we account for this...}
{\color{purple} I think we should mention Ducas' paper (and maybe also [Karenin-Kirshanova24]), but explain why we consider it orthogonal to our work. Actually except for the oracles they use which imply reconstruction of a rank-1 submodule from a short vector, I think the structure of their block-reduction is correct?}{\color{orange} I'll think of something to say}

This generic instantiation should, however, be distinguished from the rank-$2$ reduction developed by LPSW itself.
It simply falls back to unstructured lattice reduction in dimension $2d$ and does not exploit the module structure in the rank-$2$ step.
In particular, the substantially stronger approximation estimate discussed for the LPSW rank-$2$ algorithm comes from their number-theoretic, structure-exploiting implementation and hence from the accompanying heuristic assumptions.

For clarity, the relevant distinction can be summarized as follows.
{\color{orange} also want to add a column for RHF}
\begin{center}
\begin{tabularx}{\textwidth}{
    >{\raggedright\arraybackslash}X
    c
    c
    >{\raggedright\arraybackslash}X
}
\hline
Method
& Heuristic
& Module structure
& {Reduction dimension{ \color{orange} needed dimension for unstructured oracle} }
\\
\hline
LPSW structured rank-$2$ solver
& Yes
& Yes
& $O((d\log \rho)^{2+\eta})$ \\
\hline
{\color{orange}LPSW structured rank-$2$ solver, possibly w/o heuristic}
& Yes
& Yes
& $O(poly(d))$ \\
\hline
LPSW + generic lattice reduction {\color{orange} BKZ for rank 2 reduction}
& No
& No at rank $2$
& $2d$ 
\\
\hline
{\color{orange}unstructured reduction all the way}
& No
& No
& $2d \cdot rank$ \\
\hline
Adelic LLL
& No
& Yes
& $d$ and $d+[E:\mathbb{Q}]\leq 2d$ \\
\hline
\end{tabularx}
\end{center}

Thus, our claim is not that adelic LLL is the only heuristic-free way to obtain a reduction for module lattices.
Rather, adelic LLL gives an unconditional reduction algorithm that operates directly with the module structure over an arbitrary number field, including for non-free modules.
}
%%%%%%%%%%%%%%%%%%%%%%%%%%%%%%%%%%%%%%

%In this section, we give more details regarding how our algorithm for rank 2 modules compares with the ``divide-and-swap'' reduction algorithm of Lee, Pellet--Mary, Stehl\'e, and Wallet~\cite{LPSW19}. In spirit and at a very high level, their algorithm is inspired by the Lagrange-Gauss algorithm, alternating between some form of size reduction and a procedure that executes a swap if some condition is met. Our algorithm has similar structure, but also takes inspiration from adelic reduction theory~\cite{Gar18}.  {\color{orange} I probably also want to emphasize the role of adelic reduction theory} Here is how the various steps compare:

In this section, we provide a detailed comparison between adelic LLL and the module LLL algorithm of \cite{LPSW19}, the state-of-the-art fully structured reduction algorithm to date. Specifically, we clarify (i) where the core differences lie (ii) what oracles are used (iii) what assumptions are needed (iv) what it takes to dequantize (v) how their output qualities compare. We focus on their rank 2 reductions --- Algorithm \ref{alg:lll_p} above and \cite[Algorithm 4.3]{LPSW19} --- since the differences between their rank $n$ versions are relatively immaterial. 

Unravelling the notations, the two algorithms are in fact similar in a few important ways. Both algorithms make progress by repeating size reduction (which \cite{LPSW19} calls a ``Euclidean division'') and swap, like the classical LLL algorithm. Moreover, their swaps --- Algorithm \ref{alg:adelicswap} and lines 4-5 of \cite[Algorithm 4.3]{LPSW19} --- are practically identical. (Still, it may be worth noting that the adelic language transparently reveals that it is indeed a swap, in that it literally swaps the rows of the input matrix $g \in \GL(2,\A)$.)

One minor difference --- that may matter for implementations however --- is that \cite{LPSW19} uses the so-called Siegel condition (line 1 of \cite[Algorithm 4.3]{LPSW19}) for the Lov\'asz test, whereas we use the adelic generalization of the original Lov\'asz condition (Definition \ref{def:LLL-reduced-2}(ii)), which is in principle sharper than the Siegel condition. It is easy to write a Siegel version of our algorithm if desired; the corresponding proofs of Propositions \ref{prop:swap_bound} and \ref{prop:rhf_bound} differ only in a very minor way. It seems less clear, however, how to adjust the algorithms of \cite{LPSW19} to accommodate the Lov\'asz condition.

More importantly, the core difference between adelic LLL and \cite[Algorithm 4.3]{LPSW19} lies in the specific Diophantine problems associated with their size-reduction routine. This is what gives rise to all their differences on the required oracles and assumptions, and so on. We elaborate on this point below.

\subsection{Differences in the Diophantine conditions}

The size reduction of adelic LLL (Algorithm \ref{alg:sizereduce}) aims to solve the following adelic Diophantine approximation problem: given the parameters $(\tau_\sigma)_{\sigma:F\hookrightarrow\C}, 0 <\mu < 1$ and a target $m\in\A_F$, find $r \in F$ such that
\begin{align}\label{eq:app_aLLL_diophantine}
\tau_\sigma|m_\sigma-r_\sigma| \leq \mu \quad\forall{\sigma:F\hookrightarrow\C},\mbox{ and } H_\f(m-r,1)\leq \prod_{\sigma}\tau_\sigma.
\end{align}
In the reduction of a pseudobasis, one always has $m_\f = 0$, for which it suffices to solve the following simpler version: find $p,q\in\cO_F$ such that 
\begin{align}\label{eq:app_aLLL_diophantine2}
|q_\sigma m_\sigma -p_\sigma| \leq \mu \mbox{ and } |q_\sigma| \leq \tau_\sigma
\end{align}
(\emph{cf.} \eqref{eq:sizereduce_new} above) for all embeddings $\sigma$. We want to set $\tau_\sigma$ so as to minimize $\prod_\sigma\tau_\sigma$ while ensuring \eqref{eq:app_aLLL_diophantine} is always solvable (within a given time budget). In our paper, we set all $\tau_\sigma$ to \eqref{eq:C^1/d}.

In comparison, the counterpart in \cite{LPSW19} --- see Algorithm 4.1, and also Corollary 4.8 therein --- is the following problem. For parameters $\varepsilon,C,c>0$ and inputs $a \in (F\otimes\R)^*, b \in F\otimes\R$, and an ideal $\ga$ satisfying $c^{-d} \leq N(\ga) \leq c^d$, find $(u,v) \in \cO_F\times\ga$ such that
\begin{align}\label{eq:app_LPSW_diophantine}
\|ua+bv\|_\infty \leq \varepsilon\|a\|_\infty, \mbox{ and } \|v\|_\infty \leq C.
\end{align}
Here $\|x\|_\infty = \max_\sigma|\sigma(x)|$ for $x\in F$. 
This can also be seen as a kind of a Diophantine approximation problem. \cite{LPSW19} chooses $\varepsilon = 1/(4c)$ (\cite[Algorithm 4.3]{LPSW19}) and $C = c2^{0.55\delta B/d}$ (\cite[Theorem 4.6]{LPSW19}), but the constant $c$, which originates from \cite[Lemma 2.5]{LPSW19}, is only stated to be $2^{\tilde O(\log \Delta_F)/d}$. An explicit formula, such as the one for our $\tau_\sigma$'s, seems to be unavailable, which impacts the implementability of the algorithm of \cite{LPSW19}.

All the major differences between adelic LLL and \cite[Algorithm 4.3]{LPSW19} originate from the fact that \eqref{eq:app_aLLL_diophantine2} is a much easier problem to solve than \eqref{eq:app_LPSW_diophantine}, for which even the correct parameter value is unclear. 
%{\color{orange} a comment I'll erase soon: by making a connection to adelic reduction theory, they may be able to explicitly compute $c$ --- indeed GPT-6 claims it can do that, but what it gives is nontrivially worse than LPSW's claim --- and with that the performance becomes comparable to adelic LLL} 
All the CVP oracles, heuristic assumptions, and quantization issues of \cite{LPSW19} appear precisely to claim that \eqref{eq:app_LPSW_diophantine} is solvable. We examine each of these components in the subsequent sections.

\ignore{
\begin{itemize}
    \item \textbf{Size reduction:} {\color{orange} sk: perhaps it's good to highlight why this is a difficult step: it's basically trying to overcome the fact that $\cO_F$ is not in general a Euclidean domain} In general, size reduction over number fields is a difficult step, as $\cO_F$ is not in general a Euclidean domain. Although~\cite{LPSW19} does not explicitly specify a notion of size-reduction, it presents an algorithm that on input field elements $a,b\in F$ outputs ring elements $u,v\in \cO_F$ such that the trace norm of the linear combination $ua+vb$ is smaller than that of $a$, and that of $v$ is bounded by a constant that depends on $F$ (see e.g. Corollary 4.8). Note that all constraints rely on the trace norm. Using notations for \ref{subsec:reduction_notion}, and assuming, the product formula shows that $H_\f(m,1)\le C$ can be rephrased into $N(\langle v\rangle) \le C$, where $v$ is the product of denominators at finite places of $m$. Essentially, by AM-GM, this says that our condition is a relaxation of theirs. {\color{orange} sk: I thought the other way around, at least qualitatively; we need control over each place, whereas LPSW's first inequality in 4.8 only compare maximal coordinate values on each side. I agree $H_\f(m,1)$ and $\|v\|_\infty$ are interchangeable (up to a constant factor perhaps). } {\color{purple} maybe you can rephrase what I was trying to say from the other perspective.} This extra freedom carries over to our conditions at infinite places. Overall, our size reduction condition is guided by global heights, aiming to capture a fundamental domain; whereas that of \cite{LPSW19} relies more heavily on improvements at the infinite places only. We believe our notion of diophantine approximation to be more natural with regards to reduction theory. {\color{orange} sk: so really it comes down to the question ``which notion of size reduction/diophantine approximation is correct?'' our answer is grounded in adelic reduction theory. }
    \item \textbf{Condition:} Using our notations, \cite{LPSW19} compare $H(a_1)$ with $H(a_2)$, while we compare $H((a_1,0))$ with $H((ma_1,a_2))$. By analogy with the rational case, our condition is a Lov\'asz condition, while their condition corresponds to a more relaxed Siegel condition.
    \item \textbf{Swap:} A pleasant feature obtained by expressing our algorithm using the adelic formalism is the following: the swapping procedure is simply a swap, corresponding to left-multiplication by $\begin{pmatrix}
        0 & 1\\1 &0
    \end{pmatrix}\in \GL(2,\A_F)$. In pseudo-basis formalism however, swaps are not that simple, as they require extra care to preserve the module structure. It is interesting to note that the procedure used for ``swapping'' in \cite{LPSW19} (see steps 4 and 5 of Algorithm 4.3) is essentially the same as our own \texttt{adelic-swap}.
\end{itemize}

I acknowledge the adelic LLL and LPSW share many similarities, and the core difference arises from something technically deep: the exact Diophantine approximation problem involved. 

Regarding this, Adelic LLL takes input from the reduction theory of $\GL(n,\A)$, with a slight refinement for ease of algorithmic implementation. LPSW takes a different path, and this is where the two algorithms really start to differ.
}
%Also... this may be hard to convince, but the Diophantine problem we have is more ``natural'' from the math point of view; I might do some literature search to back this up. Again, another hard point to convince, but this is good because it facilitates further research.

\subsection{Subroutines needed}

Adelic LLL can run on an oracle as weak as dimension $d+1$ classical LLL --- see Algorithm \ref{alg:sizereduce} and its accompanying explanations. For a better output quality, one could choose a stronger HSVP algorithm in dimension $d+\deg E$ for a subfield $E\subseteq F$.

On the other hand, the rank-2 reduction of~\cite[Alg 4.3]{LPSW19} requires two subroutines:
\begin{itemize}[label=\textbullet]
    \item ~\cite[Alg. 4.2]{LPSW19} to scale a pseudo-basis: this is a heuristic quantum algorithm
    which, given as input a
fractional ideal $I$ of $R$ and any $\alpha \in F_{\mathbb{R}}^\times$,
outputs $x \in \alpha I \setminus \{0\}$ such that
\[
\|x\|_\infty
\leq
c \cdot |N(\alpha)|^{1/d} \cdot N(I)^{1/d},
\]
where
\[
c = 2^{\widetilde{O}(\log \Delta_F)/d}.
\]
\ignore{
In particular, we have
\[
\|x\|_\infty
\leq
c \cdot |N(x)|^{1/d}.
\]}
If $\alpha \in F$, the algorithm performs a number of quantum
operations that is polynomial in $\log \Delta_F$ and the input
bit-length, and makes a single call to a CVP oracle in a lattice $L_{F,1}$
of dimension
$\widetilde{O}(\log \Delta_F)$ and depending only on $F$.
 \item ~\cite[Alg. 4.1]{LPSW19} which is some kind of Euclidean division over $\cO_F$. The algorithm is heuristic and calls Alg. 4.2 three times, but also makes a single call
 to a CVP oracle in $\cO_F$,
 and another single call to a CVP oracle in a lattice 
 $L_{F,2}$
of dimension  $O((d \log \rho(\cO_F))^{2+\eta} )$ for any $\eta > 0$
\end{itemize}
Even with preprocessing, CVP is known to be NP-hard \cite{Mic01}. The lattices $L_{F,1}$ and $L_{F,2}$ only depend on the field $F$, but the target is different for each instantiation. Hence the rank 2 algorithm of \cite{LPSW19} requires subroutines to solve instances of NP-hard problems.

%(independent of instance; even with the very best preprocessing though, CVPP is NP hard unlike BDD; moreover target depends on instance). For $L_{K,2}$ they also need to assume that the target is never too far away: this is their Heuristic 1.

In addition, the dimensions of these lattices are prohibitively large. 
$L_{F,1}$ has dimension $\tilde O(\log|\Delta_F|)$. For cyclotomic fields this is $\tilde O(d)$. According to \cite[Corollary 4.8]{LPSW19}, $L_{F,2}$ has dimension $O_\eta(d^{2+\eta})$ for $\eta > 0$. 
To dequantize the algorithm of \cite{LPSW19}, $L_{F,1}$ need to be replaced by $L'_{F,1}$, whose dimension is almost as large as that of $L_{F,2}$. %As with the parameters for \eqref{eq:app_LPSW_diophantine}, the implicit constants here are not clearly specified. --- well at least they can give some specific working value

\ignore{
%At a lower level, our algorithms each require a subroutine for size reduction (or ``Euclidean division'' using the wording of \cite{LPSW19}'s Algorithm 4.1). In both cases, this subroutine relies on unstructured lattice reduction:
\begin{itemize}
    \item \textbf{LPSW:} Requires CVP calls in a fixed lattice of dimension $\Omega(d^2)$.
    \item \textbf{Adelic LLL:} Requires an HSVP oracle in dimension at most $2d$. {\color{orange} and as low as $d+1$, sacrificing output quality}
\end{itemize}
This makes our algorithm practical, as it can be instantiated in classical polynomial time using LLL, while \cite{LPSW19} has prohibitive preprocessing, and their Euclidean division relies on additional heuristics {\color{purple} please add some scary high-level description of the heuristic}. {\color{orange} also I found on LPSW page 12 eprint version, saying Lemma 2.5 (heuristic) is a quantum algorithm but it can be dequantized with additional heuristic assumptions. Also, regarding the part in LPSW pages 4-5 where they remove the heuristic, according to my study with chatGPT this is turning the problem into an NP-hard problem?! This sounds pretty scary to me }
{\color{purple} I'm in favour of disregarding this argument pages 4-5 as it is not well detailed, and horrifically inefficient anyways, without mentioning the fact that it is very much unstructured} {\color{orange} but referee might say, ``hey one can make LPSW heuristic free'' which may be technically true, so I feel it wouldn't hurt to preemptively defend ourselves from it} {\color{purple} agree, but in some sense this shouldn't be too hard as it implies using CVP in some huge dimension, much worse than $d^2$ apparently, which makes the comparison very much in our favour}
}

\subsection{Heuristic assumptions}

Adelic LLL requires no heuristics.

The two heuristic pillars of \cite{LPSW19} are Heuristic 1 and Lemma 2.5 (heuristic) of their paper. Lemma 2.5 in turn assumes the generalized Riemann hypothesis and Heuristic 4 of  \cite{PHS19}. These are independent heuristics, and every other heuristic statement of \cite{LPSW19} follows from either of these two. Heuristic 1 is a statement to the effect that the CVP target $\mathbf t$ in \cite[Algorithm 4.1]{LPSW19} is close enough to $L_{K,2}$ for a solution to \eqref{eq:app_LPSW_diophantine} with respect to the claimed parameters to exist. Lemma 2.5 is a statement that, with one call to the CVP oracle of $L_{K,1}$, it can find a short vector of a weighted ideal in polynomial time, which is used as a supplementary device in their Algorithm 4.1.

%The heuristics of LPSW are introduced precisely to solve their Diophantine approximation said above.

%Theorem 1.2 as stated relies on Heuristic 1 and Heuristic Lemma 2.5 --- these are the two heuristic pillars of the paper. Lemma 2.5 in turn relies on GRH and another heuristic from a different paper. Both of these are assumed in order to show that their Diophantine problem always has a solution. Heuristic 1 actually has an easy counterexample, but it does not seem to immediately invalidate LPSW's conclusions from it.

%(here I'd also discuss a ``semi-structured'' version along the lines Changmin and Henry suggested)

In pages 4-5 of \cite{LPSW19}, an approach to make its rank 2 algorithm heuristic-free is suggested. Briefly speaking, they reduce their problem to 3SAT which they again reduce to CVP with preprocessing for a lattice construction that would play the role of $\cO_F$, $L_{F,1}$ and $L_{F,2}$ combined. However, this construction has an even higher dimension than $\cO_F\oplus L_{F,1}\oplus L_{F,2}$, and by a far margin.

\subsection{Dequantization}

Adelic LLL is classical as presented in this paper.

Lemma 2.5 (heuristic) of \cite{LPSW19}, on the other hand, entails a quantum algorithm.
%(they need this to find short rk 1 submodule, not necessarily to solve HSVP)
According to \cite{LPSW19}, it is dequantizable with additional heuristics (Heuristics 2 and 3 of \cite{PHS19}) and subexponential running time disregarding the complexity of the CVP oracle; see the remarks below Theorem 1.2 and Lemma 2.5 of \cite{LPSW19}.

One does not need to take this path for the dequantization of \cite{LPSW19}, however. 
For a free rank 2 module with a known basis, \cite[Algorithm 4.3]{LPSW19} can be made to run on classical computers, by replacing \cite[Lemma 2.5]{LPSW19} with \cite[Lemma 5.2]{LPSW19}. On the other hand, \cite{DMPT} gives a probabilistic polynomial-time reduction of the problem of reducing an arbitrary rank 2 module to that of reducing a free rank 2 module. Combining, one obtains a dequantization of \cite[Algorithm 4.3]{LPSW19}. All the dependencies on the oracles and the heuristics remain; in fact, it becomes worse in that $L_{K,1}$ needs to be replaced by the much larger $L'_{K,1}$.

\subsection{Performance comparison}
\label{subsec: Comparison}

%The downside to making our algorithm provable and using cheaper subroutines is that we achieve a larger approximation factor compared to \cite{LPSW19}. {\color{orange} After all the above discussion I wouldn't call it a downside..}

%{\color{purple} moved here temporarily}

Unraveling the definitions, our $Q$ from Proposition \ref{prop:rhf_bound} is to be compared against $\alpha_F^{1/2}$ of \cite{LPSW19}. From the proof of \cite[Corollary 4.2]{LPSW19}, one has $\alpha_F \approx2^d\gamma_\mathcal N^{2d}|\Delta_F|$, and from \cite[Theorem 4.1]{LPSW19}, $\gamma_\mathcal N = 2^{\tilde O(d\log\rho)/d}$, where $\rho$ here is the covering radius of the Minkowski embedding of $\cO_F$. In summary,
\begin{align*}
\alpha_F^\frac{1}{2} \approx 2^{\tilde O(d\log\rho)}\sqrt{|\Delta_F|}.
\end{align*}

This is much better than $Q$, which is $2^{O(d^2)}$, very roughly speaking (a more precise formula in case $F=\Q(\zeta_{2^{\kappa+1}})$ is discussed in Section \ref{sec:reduction}). It may be helpful to compare in terms of the root Hermite factor also. Given a rank $1$ submodule of height $QH(\det g)^{1/2}$, one can apply any reduction algorithm to it to obtain a vector of length $2^{O(d)}H(\det g)^{1/(2d)}$; replacing $Q$ by $\alpha^{1/2}_F$, the number becomes $2^{O((\log d)^k)}H(\det g)^{1/(2d)}$ for some $k>0$ implicit in the $\tilde O$ notation, comparable to HSVP with a quasi-polynomial approximation factor. 

One is reminded, however, that this advantage comes at a cost of polynomially many exact CVP instances on a lattice of dimension $O(d^{2+\eta})$, as discussed above.

%However, one is reminded that the rank 2 reduction of \cite{LPSW19}, setting aside its heuristic nature, involves an exact CVP on a lattice of dimension $O((d\log\rho)^{2+\eta})$ for any $\eta>0$ \cite[Corollary 4.8]{LPSW19}. 

%LPSW guarantees excellent output quality but at what cost?

\ignore{
\vspace{4mm} {\color{orange} below is more careful writing by Phong}

Alg. 4.3 is the rank-2 reduction of~\cite{LPSW19}. It requires two subroutines:
\begin{itemize}
    \item ~\cite[Alg. 4.2]{LPSW19} to scale a pseudo-basis: this is a heuristic quantum algorithm
    which, given as input a
fractional ideal $I$ of $R$ and any $\alpha \in K_{\mathbb{R}}^\times$,
outputs $x \in \alpha I \setminus \{0\}$ such that
\[
\|x\|_\infty
\leq
c \cdot |N(\alpha)|^{1/d} \cdot N(I)^{1/d},
\]
where
\[
c = 2^{\widetilde{O}(\log \Delta_K)/d}.
\]
In particular, we have
\[
\|x\|_\infty
\leq
c \cdot |N(x)|^{1/d}.
\]
If $\alpha \in K$, the algorithm performs a number of quantum
operations that is polynomial in $\log \Delta_K$ and the input
bit-length, and makes a single call to a CVP oracle in a lattice $L_{K,1}$
of dimension
$\widetilde{O}(\log \Delta_K)$ and depending only on $K$.
 \item ~\cite[Alg. 4.1]{LPSW19} which is some kind of Euclidean division over $R$. The algorithm is heuristic and calls Alg. 4.2 three times, but also makes a single call
 to a CVP oracle in $R$,
 and another single call to a CVP oracle in a lattice 
 $L_{K,2}$
of dimension  $O((d \log \rho(R))^{2+\eta} )$ for any $\eta > 0$
\end{itemize}
Then \cite[Lemma 4.11]{LPSW19} gives the following result for the rank-2 algorithm (Algorithm 4.3):
Let
\[
\gamma_{\cN} \geq 4 \cdot C \cdot c^2,
\]
where $C=2^{\widetilde{O}(\log \Delta_K)/d}$. Then, given as input a pseudo-basis
of a rank-$2$ module
\[
M \subset K_{\mathbb{R}}^2,
\]
Algorithm~4.3 outputs a vector
$v \in M \setminus \{0\}$ such that
\[
\cN(v) \leq \gamma_{\cN}^d \lambda_1(M).
\]
The minimum value of $\gamma_{\cN}$
is 
$$ 4 \cdot C \cdot c^2 = 2^{\widetilde{O}(\log \Delta_K)/d}
2^{\widetilde{O}(\log \Delta_K)/d}
= 2^{\widetilde{O}(\log \Delta_K)/d}
$$
Further, if $M \subseteq K^2$ and assuming that Algorithms~4.1 and
4.2 are correct and run in time polynomial in $\log \Delta_K$ and
their input bit-length, then Algorithm~4.3 also runs in time
polynomial in $\log \Delta_K$ and the input bit-length.
}

\section{Reductions to Rank-2}

For comparison, we recall the reductions to rank-2 proved in~\cite{LPSW19,MS20}: 
\begin{itemize}[label=\textbullet]
\item \cite[Theorem 5.10]{MS20} states three reductions:
For any number field $K$, order $R \subseteq \mathcal{O}_K$
of degree $n$ with an inner product
$\langle \cdot,\cdot\rangle_\rho$ over $K_{\mathbb{R}}$, rank
$k \geq 2$, approximation factor $\gamma = \gamma(R,k) \geq 1$,
semicanonical inner product $\langle \cdot,\cdot\rangle_\rho$, and
constant $\varepsilon > 0$, there is an efficient reduction from
$(\gamma,k)$-ModuleSVP to $(\gamma',2)$-ModuleSVP, where
\[
\gamma
:=
(1+\varepsilon)
\cdot
\left(
\frac{\gamma'\delta_n}{\alpha_R}
\right)^2
\cdot
\left(
\frac{\gamma'\delta_n\mu_{R,2}}{\alpha_R}
\right)^{2(k-2)}.
\]
There is also an efficient reduction from
$(\gamma_H,k)$-ModuleHSVP to $(\gamma',2)$-ModuleSVP, where
\[
\gamma_H
:=
\gamma'\delta_n
\cdot
\left(
(1+\varepsilon)\gamma'
\frac{\delta_n\mu_{R,2}}{\alpha_R}
\right)^{k-1}.
\]
For any number field $K$, order $R \subseteq \mathcal{O}_K$, rank
$k \geq 2$, approximation factor $\gamma' = \gamma'(R,k) \geq 1$,
semicanonical inner product $\langle \cdot,\cdot\rangle_\rho$, and
constant $\varepsilon > 0$, there exists an efficient reduction from
$(\gamma,k)$-DIP to $(\gamma',2)$-DIP, where
\[
\gamma
:=
(1+\varepsilon)\gamma'
\cdot
\left(
(1+\varepsilon)\gamma'\mu_{R,2}
\right)^{2(k-2)}.
\]
where:
\begin{description}
\item[Hermite's constant]  \[
\delta_n
:=
\sup_L
\frac{\lambda_1(L)}
{\det(L)^{1/n}}.
\]
\item[Other constants] \[
\alpha_R
:=
\inf_I
\frac{\lambda_1(I)}
{\det(I)^{1/n}},
\]
where the infimum is over all rank-one modules
$I \subset K_{\mathbb{R}}^l$. For the canonical embedding, we have \[
\alpha_R = \frac{\sqrt{n}}{\det(R)^{1/n}}.
\]
\[
\mu_{R,k}
:=
\sup_M
\frac{\tau_1(M)}
{\det(M)^{1/(kn)}},
\]
where the supremum is over all rank-$k$ module lattices
$M \subset K^{k'}$ for any integer $k' \geq k$,
and for any module lattice $M$
\[
\tau_1(M)
:=
\min_{I \subset M} \det(I)^{1/n},
\]
where the minimum is over the rank-one submodule lattices.
Then \[
1 \leq \mu_{R,k} \leq \frac{\delta_{kn}}{\alpha_R}.
\tag{4}
\]
\item[$(\gamma,k)$-ModuleSVP]
For an approximation factor $\gamma = \gamma(R,k) \geq 1$, the input is (a generating set for) a module lattice
$M \subset K_{\mathbb{R}}^l$ with rank $k$, the goal is to output a
module element $x \in M$ such that
\[
0 < \|x\|_\rho \leq \gamma \lambda_1(M).
\]
\item[$(\gamma,k)$-ModuleHSVP]
For an approximation factor $\gamma = \gamma(R,k) \geq 1$,
the input is (a generating set for) a module lattice
$M \subset K_{\mathbb{R}}^l$ with rank $k$, the goal is to output a
module element $x \in M$ such that
\[
0 < \|x\|_\rho \leq
\gamma \det(M)^{1/(kn)}.
\]
\item[$(\gamma,k)$-DIP]
For an approximation factor $\gamma = \gamma(R,k) \geq 1$, 
the input is (a generating set for) a module lattice
$M \subset K_{\mathbb{R}}^l$ with rank $k$,
the
$(\gamma,k)$-Dense Ideal Problem, or $(\gamma,k)$-DIP, is asks to find a rank-one
submodule $M' \subset M$  such
that
\[
\det(M')^{1/n} \leq \gamma \tau_1(M).
\]
\end{description}
\item ~\cite[Th 3.9]{LPSW19} states:
Let $\gamma \geq 1$ and assume that an LLL-reduced $\mathbb{Z}$-basis
of $R$ is known. Then there exists a reduction from solving
$\mathrm{SVP}_{\gamma'}$ in rank-$n$ modules (with respect to
$\lVert \cdot \rVert$) in $K^n$ to solving $\mathrm{SVP}_{\gamma}$
in rank-$2$ modules in $K^2$, where
\[
\gamma'
=
\left(
2\gamma\Delta_K^{1/d}
\right)^{2n-1}.
\]
This reduction runs in time polynomial in $\log \Delta_K$ and the
bit-length of the input pseudo-basis.
%\phong{This approximation factor is different from \cite{MS20}, which is roughly:
%$$ \gamma' = (\gamma \sqrt{n}/\alpha_R)^{2(n-1)} \mu_{R,2}^{2(n-2)}$$,
%so the exponent of $\gamma$ is slightly smaller there.}
%\phong{LPSW19 only consider approx-SVP, not HSVP}
\end{itemize}
Hence, all previous rank-2 reductions~\cite{LPSW19,MS20} transformed a computational problem
(either HSVP or approx-SVP) in a module lattice of arbitrary rank
into approximating the shortest vector in a rank-2 module lattice.
By contrast, we reduce such problems
to finding a short vector in a rank-1 module lattice, with respect to its determinant:
due to the structure of rank-1 module lattices,
such a vector also approximates a shortest vector.

\end{document}